\colorlet{darkishRed}{red!80!black}
\colorlet{darkishBlue}{blue!60!black}
\colorlet{darkishGreen}{green!60!black}
\renewcommand{\PrintDOI}[1]{\doi{#1}}
\let\setminus=\smallsetminus
\pgfplotsset{compat=1.15}
\let\setminus=\smallsetminus
\renewcommand{\leq}{\leqslant}
\renewcommand{\geq}{\geqslant}
\let\rho=\varrho
\let\phi=\varphi
\newcommand{\mathintitle}[1]{\textbf{$\boldsymbol{#1}$}}
\newcommand{\shiftingfct}[2]{f\mkern-6mu\downarrow_{#1}^{#2}}
\newcommand{ \R } { \mathbb{R} }
\newcommand{ \N } { \mathbb{N} }
\newcommand{\abs}[1]{\lvert#1\rvert}
\def\calCommandfactory#1{%
   \expandafter\def\csname c#1\endcsname{\mathcal{#1}}}
\def\frakCommandfactory#1{%
   \expandafter\def\csname frak#1\endcsname{\mathfrak{#1}}}
\newcounter{ctr}
  \edef\X{\@Alph\c@ctr}
  \edef\Y{\@alph\c@ctr}
\newtheorem{theorem}{Theorem}[section] 
\newtheorem{mainresult}{Theorem}
\newtheorem{LEM}[theorem]{Lemma}
\newtheorem{THM}[theorem]{Theorem}
\newtheorem{COR}[theorem]{Corollary}
\newtheorem{PROP}[theorem]{Proposition}
\theoremstyle{definition}
\crefname{claim}{Claim}{Claims}
\def\namedlabel#1#2{\begingroup
   \def\@currentlabel{#2}%
   \label{#1}\endgroup
}
\newlist{thmlist}{enumerate}{1}
\setlist[thmlist]{label=(\roman{thmlisti}), ref=\thethm.(\roman{thmlisti}),noitemsep}
\newenvironment{customthm}[1]
  {\innercustomthm}
  {\endinnercustomthm}
\theoremstyle{definition}
\newtheorem{EX}[theorem]{Example}
\newtheorem{CONSTR}[theorem]{Construction}
\theoremstyle{remark}
\def\bigskip{\vspace{14pt}}
\title[Refining trees of tangles in abstract separation systems: inessential parts]{Refining trees of tangles in abstract separation systems:\\ inessential parts}
\author{Sandra Albrechtsen}
\address{University of Hamburg, Department of Mathematics, Bundesstraße 55 (Geomatikum), 20146 Hamburg, Germany}
\email{sandra.albrechtsen@uni-hamburg.de}
\subjclass[2020]{05C83, 05C40, 06A07}
\keywords{Tree of tangles, tangle-tree duality, abstract separation system, submodularity, canonical}
\begin{document}

\maketitle

\begin{abstract}
\noindent Robertson and Seymour proved two fundamental theorems about tangles in graphs: the \emph{tree-of-tangles theorem}, which says that every graph has a tree-decomposition such that distinguishable tangles live in different nodes of the tree, and the \emph{tangle-tree duality theorem}, which says that graphs without a $k$-tangle have a tree-decomposition that witnesses the non-existence of such tangles, in that $k$-tangles would have to live in a node but no node is large enough to accommodate one.

Erde combined these two fundamental theorems into one, by constructing a single tree-decomposition such that every node either accommodates a single $k$-tangle or is too small to accommodate one. Such a tree-decomposition thus shows at a glance how many $k$-tangles a graph has and where they are.

The two fundamental theorems have since been extended to abstract separation systems, which support tangles in more general discrete structures.
In this paper we extend Erde's unified theorem to such general systems.
\end{abstract}

\section{Introduction}

\emph{Tangles} were introduced by Robertson and Seymour as a way to indirectly capture highly cohesive but fuzzy structures inside a graph \cite{GMX}. Formally, a tangle of a graph $G$ is an orientation of all its separations up to some order. The idea is that every highly cohesive substructure of $G$ will lie mostly on one side of any such low-order separation, and thus `orients' it towards that side. A tangle, very broadly, is an orientation of all the low-order separations that arises in this way.

Since its first introduction, the notion of a tangle and its framework of graph separations have been generalized to so-called \emph{abstract separation systems}. 
Although these are significantly more general than the separation systems of graphs, the two fundamental theorems about graph tangles~-- the tree-of-tangles theorem and the tangle-tree duality theorem from Robertson and Seymour \cite{GMX}~-- are still valid in this setting.  

In what follows we assume familiarity with graph tangles as described in \cite{DiestelBook16noEE}*{Ch.\ 12.5}.
\bigskip

For a given graph $G$ and an integer $k > 0$, the \emph{tree-of-tangles theorem} asserts the existence of a tree-decomposition $(T,\cV)$ of $G$ that distinguishes all its $k$-tangles, in that they live in different parts. A part~$V_t \in \cV$ is called \emph{essential} if there is a tangle living in it, and otherwise \emph{inessential}.
Choosing the tree~$T$ of the decomposition to be minimal so that it is still able to distinguish all the tangles ensures that all its parts are essential.

However, in some applications it is important to have a canonical tree-decomposition, one which can be defined purely in terms of invariants of the underlying graph. For example, an algorithm which constructs the tree-decomposition canonically will always produce the same output for a given input graph and set of tangles, regardless of how these are given to the algorithm as input.

A canonical tree-decomposition that distinguishes the $k$-tangles in a graph may have to have some inessential parts, since the canonicity requirement does not allow us to merge inessential parts with essential ones in order to make all the parts essential. These inessential parts can contain a large portion of the graph, and then the tree-decomposition tells us nothing about the structure of that portion. In particular, it does not tell us \emph{why} there are no tangles living in these parts.

However, if there is no tangle (of some given order) in $G$ at all, then the tangle-tree duality theorem does tell us something about its structure: it guarantees the existence of a tree-decomposition in which each part is too `small' to be home to a tangle. Since every tangle has to live in some part, this tree-decomposition then witnesses that there are no tangles in $G$ at all.

As inessential parts of a tangle-distinguishing tree-decomposition are not home to any tangles, it seems natural to ask whether it is possible to locally find tree-decompositions of the inessential parts which refine the original canonical tree-decomposition and witness that these parts are inessential, in the same way as a tree-decomposition from the tangle-tree duality theorem witnesses that there is no tangle at all.

\begin{figure}[h]
  \centering
  \begin{subfigure}[b]{0.35\linewidth}
  \definecolor{grey}{rgb}{0.6,0.6,0.6}
\scalebox{0.63}{%
\begin{tikzpicture}
\draw [line width=1.1pt,color=grey] (11,6) circle (3cm);
\draw [line width=1.1pt,color=grey] (8.061541557078904,8.60471544394863) circle (1.5cm);
\draw [line width=1.1pt,color=grey] (13.977134821732324,8.503421038731961) circle (1.5cm);
\draw [line width=1.1pt,color=grey] (9.560698754285593,2.34472120155852) circle (1.5cm);
\draw [line width=1.1pt] (10.025377349769181,3.53739626329973)-- (10.92661120692412,4.215787356752289);
\draw [line width=1.1pt] (9.056414991584486,7.799341711253636)-- (9.953904750352931,7.542725382126214);
\draw [line width=1.1pt] (12.983708901583505,7.696262478611045)-- (12.198877317346657,7.117205530104936);
\draw [line width=1.1pt] (12.198877317346657,7.117205530104936)-- (12.009034051586545,6.210419266702657);
\draw [line width=1.1pt] (12.009034051586545,6.210419266702657)-- (12.842637483305642,5.829248741011914);
\draw [line width=1.1pt] (12.009034051586545,6.210419266702657)-- (11.222652302402308,5.949742344553106);
\draw [line width=1.1pt] (11.222652302402308,5.949742344553106)-- (10.24994584583112,5.949742344553106);
\draw [line width=1.1pt] (10.24994584583112,5.949742344553106)-- (10.630570111445932,5.103910643186854);
\draw [line width=1.1pt] (11.222652302402308,5.949742344553106)-- (10.630570111445932,5.103910643186854);
\draw [line width=1.1pt] (10.630570111445932,5.103910643186854)-- (10.92661120692412,4.215787356752289);
\draw [line width=1.1pt] (10.24994584583112,5.949742344553106)-- (9.338231569899573,5.735533094263147);
\draw [line width=1.1pt] (10.24994584583112,5.949742344553106)-- (10.292237430899432,6.725088070805504);
\draw [line width=1.1pt] (10.292237430899432,6.725088070805504)-- (9.728349629988598,6.725088070805504);
\draw [line width=1.1pt] (9.728349629988598,6.725088070805504)-- (9.953904750352931,7.542725382126214);
\draw [line width=1.1pt] (9.953904750352931,7.542725382126214)-- (10.581762246656849,7.531077635120268);
\draw [line width=1.1pt] (10.581762246656849,7.531077635120268)-- (10.292237430899432,6.725088070805504);
\draw [line width=1.1pt] (7.066668122573324,9.410089176643625)-- (6.8666307277715495,8.145816642464021);
\draw [line width=1.1pt] (6.8666307277715495,8.145816642464021)-- (7.86150416227713,7.340442909769026);
\draw [line width=1.1pt] (7.86150416227713,7.340442909769026)-- (9.056414991584486,7.799341711253636);
\draw [line width=1.1pt] (9.056414991584486,7.799341711253636)-- (9.256452386386258,9.06361424543324);
\draw [line width=1.1pt] (9.256452386386258,9.06361424543324)-- (8.261578951880677,9.868987978128233);
\draw [line width=1.1pt] (8.261578951880677,9.868987978128233)-- (7.066668122573324,9.410089176643625);
\draw [line width=1.1pt] (7.066668122573324,9.410089176643625)-- (9.256452386386258,9.06361424543324);
\draw [line width=1.1pt] (9.256452386386258,9.06361424543324)-- (7.86150416227713,7.340442909769026);
\draw [line width=1.1pt] (7.86150416227713,7.340442909769026)-- (7.066668122573324,9.410089176643625);
\draw [line width=1.1pt] (7.066668122573324,9.410089176643625)-- (9.056414991584486,7.799341711253636);
\draw [line width=1.1pt] (9.056414991584486,7.799341711253636)-- (8.261578951880677,9.868987978128233);
\draw [line width=1.1pt] (8.261578951880677,9.868987978128233)-- (6.8666307277715495,8.145816642464021);
\draw [line width=1.1pt] (6.8666307277715495,8.145816642464021)-- (9.056414991584486,7.799341711253636);
\draw [line width=1.1pt] (7.86150416227713,7.340442909769026)-- (8.261578951880677,9.868987978128233);
\draw [line width=1.1pt] (6.8666307277715495,8.145816642464021)-- (9.256452386386258,9.06361424543324);
\draw [line width=1.1pt] (13.774827963859952,9.767332402419228)-- (14.179441679604697,7.239509675044694);
\draw [line width=1.1pt] (14.179441679604697,7.239509675044694)-- (14.970560741881144,9.310579598852877);
\draw [line width=1.1pt] (14.970560741881144,9.310579598852877)-- (12.781402043711132,8.960173842298312);
\draw [line width=1.1pt] (12.781402043711132,8.960173842298312)-- (15.172867599753516,8.04666823516561);
\draw [line width=1.1pt] (15.172867599753516,8.04666823516561)-- (12.983708901583505,7.696262478611045);
\draw [line width=1.1pt] (12.983708901583505,7.696262478611045)-- (13.774827963859952,9.767332402419228);
\draw [line width=1.1pt] (13.774827963859952,9.767332402419228)-- (14.970560741881144,9.310579598852877);
\draw [line width=1.1pt] (14.970560741881144,9.310579598852877)-- (15.172867599753516,8.04666823516561);
\draw [line width=1.1pt] (15.172867599753516,8.04666823516561)-- (14.179441679604697,7.239509675044694);
\draw [line width=1.1pt] (14.179441679604697,7.239509675044694)-- (12.983708901583505,7.696262478611045);
\draw [line width=1.1pt] (12.983708901583505,7.696262478611045)-- (12.781402043711132,8.960173842298312);
\draw [line width=1.1pt] (12.781402043711132,8.960173842298312)-- (13.774827963859952,9.767332402419228);
\draw [line width=1.1pt] (13.774827963859952,9.767332402419228)-- (15.172867599753516,8.04666823516561);
\draw [line width=1.1pt] (14.179441679604697,7.239509675044694)-- (12.781402043711132,8.960173842298312);
\draw [line width=1.1pt] (12.983708901583505,7.696262478611045)-- (14.970560741881144,9.310579598852877);
\draw [line width=1.1pt] (8.760151150099325,3.343482200712786)-- (8.295472554615737,2.150807138971576);
\draw [line width=1.1pt] (8.295472554615737,2.150807138971576)-- (9.096020158802004,1.1520461398173103);
\draw [line width=1.1pt] (9.096020158802004,1.1520461398173103)-- (10.36124635847186,1.345960202404254);
\draw [line width=1.1pt] (10.36124635847186,1.345960202404254)-- (10.825924953955449,2.538635264145464);
\draw [line width=1.1pt] (10.825924953955449,2.538635264145464)-- (10.025377349769181,3.53739626329973);
\draw [line width=1.1pt] (10.025377349769181,3.53739626329973)-- (8.760151150099325,3.343482200712786);
\draw [line width=1.1pt] (8.760151150099325,3.343482200712786)-- (10.36124635847186,1.345960202404254);
\draw [line width=1.1pt] (10.36124635847186,1.345960202404254)-- (8.295472554615737,2.150807138971576);
\draw [line width=1.1pt] (8.295472554615737,2.150807138971576)-- (10.025377349769181,3.53739626329973);
\draw [line width=1.1pt] (10.025377349769181,3.53739626329973)-- (10.36124635847186,1.345960202404254);
\draw [line width=1.1pt] (9.096020158802004,1.1520461398173103)-- (10.825924953955449,2.538635264145464);
\draw [line width=1.1pt] (10.825924953955449,2.538635264145464)-- (8.760151150099325,3.343482200712786);
\draw [line width=1.1pt] (8.760151150099325,3.343482200712786)-- (9.096020158802004,1.1520461398173103);
\draw [line width=1.1pt] (9.096020158802004,1.1520461398173103)-- (10.025377349769181,3.53739626329973);
\draw [line width=1.1pt] (8.295472554615737,2.150807138971576)-- (10.825924953955449,2.538635264145464);

\begin{scriptsize}
\draw [fill=black] (10.025377349769181,3.53739626329973) circle (2.3pt);
\draw [fill=black] (12.983708901583505,7.696262478611045) circle (2.3pt);
\draw [fill=black] (9.056414991584486,7.799341711253636) circle (2.3pt);
\draw [fill=black] (9.256452386386258,9.06361424543324) circle (2.3pt);
\draw [fill=black] (8.261578951880677,9.868987978128233) circle (2.3pt);
\draw [fill=black] (7.066668122573324,9.410089176643625) circle (2.3pt);
\draw [fill=black] (6.8666307277715495,8.145816642464021) circle (2.3pt);
\draw [fill=black] (7.86150416227713,7.340442909769026) circle (2.3pt);
\draw [fill=black] (14.179441679604697,7.239509675044694) circle (2.3pt);
\draw [fill=black] (15.172867599753516,8.04666823516561) circle (2.3pt);
\draw [fill=black] (14.970560741881144,9.310579598852877) circle (2.3pt);
\draw [fill=black] (13.774827963859952,9.767332402419228) circle (2.3pt);
\draw [fill=black] (12.781402043711132,8.960173842298312) circle (2.3pt);
\draw [fill=black] (8.760151150099325,3.343482200712786) circle (2.3pt);
\draw [fill=black] (8.295472554615737,2.150807138971576) circle (2.3pt);
\draw [fill=black] (9.096020158802004,1.1520461398173103) circle (2.3pt);
\draw [fill=black] (10.36124635847186,1.345960202404254) circle (2.3pt);
\draw [fill=black] (10.825924953955449,2.538635264145464) circle (2.3pt);
\draw [fill=black] (10.92661120692412,4.215787356752289) circle (2.3pt);
\draw [fill=black] (9.953904750352931,7.542725382126214) circle (2.3pt);
\draw [fill=black] (12.198877317346657,7.117205530104936) circle (2.3pt);
\draw [fill=black] (12.009034051586545,6.210419266702657) circle (2.3pt);
\draw [fill=black] (12.842637483305642,5.829248741011914) circle (2.3pt);
\draw [fill=black] (11.222652302402308,5.949742344553106) circle (2.3pt);
\draw [fill=black] (10.24994584583112,5.949742344553106) circle (2.3pt);
\draw [fill=black] (10.630570111445932,5.103910643186854) circle (2.3pt);
\draw [fill=black] (9.338231569899573,5.735533094263147) circle (2.3pt);
\draw [fill=black] (10.292237430899432,6.725088070805504) circle (2.3pt);
\draw [fill=black] (9.728349629988598,6.725088070805504) circle (2.3pt);
\draw [fill=black] (10.581762246656849,7.531077635120268) circle (2.3pt);
\end{scriptsize}
\end{tikzpicture}
}%

\subcaption{}
    \label{fig:RefiningAToT1}
  \end{subfigure}
\hspace{10mm}
  \begin{subfigure}[b]{0.35\linewidth}
    \include{RefiningToTsIntro2}
    \label{fig:RefiningAToT2}
  \end{subfigure}
  \centering
  \caption{A tree-decomposition of $G$ that distinguishes all its $k$-tangles, and a refinement of the inessential part.}
  \label{fig:RefiningAToT}
\end{figure}
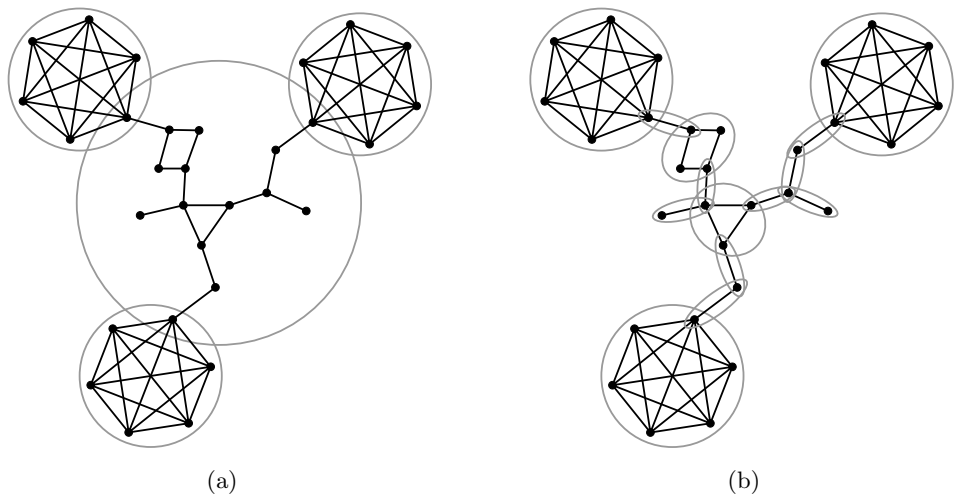

For example, think of a tree-like graph to which three large disjoint cliques have been glued in a vertex. If $k$ is chosen big enough, there will be precisely three $k$-tangles in the resulting graph $G$, those induced by the cliques. A canonical tree-decomposition that distinguishes these $k$-tangles could look like a star with three leaves, where each leaf contains one of the three cliques while the center of the star contains the rest of $G$ (\cref{fig:RefiningAToT1}).

However, in this example it is easy to see that the middle part can be split into smaller parts that are each too small to be home to a $k$-tangle. This gives rise to one overall tree-decomposition of $G$ in which the three big parts are each home to a $k$-tangle and all other parts are too small to accommodate one (\cref{fig:RefiningAToT2}).

Erde \cite{JoshRefining} showed that for every graph $G$ there is a canonical tree-decomposition of $G$ that efficiently distinguishes a given set of $k$-tangles in $G$ and which extends to a tree-decomposition all whose inessential parts are too small to be home to a tangle: 

\newtheorem*{ERDE*}{\textbf{\emph{Erde's Refinement Theorem}}}\namedlabel{thm:ErdesRefThm}{Erde's Refinement Theorem}
\begin{ERDE*}{\cite{JoshRefining}*{Corollary~3.2}}
	\emph{Let $G$ be a graph and $k \geq 3$ and let $\cF \subseteq 2^{\vS_k(G)}$ be a friendly canonical set of stars. Then there exist nested sets $\tilde{N} \subseteq N \subseteq S_k(G)$ such that:
	\begin{itemize}
		\item $\tilde{N}$ is fixed under all automorphisms of $G$ and distinguishes all the $\cF$-tangles of $S_k(G)$ efficiently;
		\item every node of $N$ is either a star in $\cF$ or home to a tangle.
	\end{itemize}}
\end{ERDE*}

\noindent See \cref{sec:ASS} for definitions.
\medskip 

In fact, Erde \cite{JoshRefining} showed more than this: it is possible to refine \emph{every} such tree set $\tilde{N}$ by some $N$ as above, as long as every separation in $\tilde{N}$ distinguishes some pair of $\cF$-tangles efficiently. He also gave an example \cite{JoshRefining} which shows that this additional assumption about $\tilde{N}$ is necessary to get the sharper result even if $\cF$ consists only of the stars whose exclusion defines $k$-tangles as such.

Note also that we cannot strengthen the theorem so that $N$, too, is canonical (see \cite{JoshRefining} for an example).
\bigskip

Since Robertson and Seymour introduced tangles for graphs, an effort was made to generalize the concept of tangles and separations so as to make them applicable in as many scenarios as possible. 
The original definition of a tangle was already made in such a way that it could easily be generalized to set separations~\cite{ASS}. This led to the notion of a tangle for `connectivity systems', such as those arising from matroids. 

Later, Diestel \cite{ASS} introduced \emph{abstract separation systems}, which define `separations' axiomatically, without reference to any underlying graph or set. Instead, an \emph{abstract separation system} is defined as a poset with an order-reversing involution. Such a separation system $\vS$ is called \emph{submodular} if every two of its elements, $\vr$ and $\vs$, say, have an infimum $\vr \wedge \vs$ and a supremum $\vr \vee \vs$ in some fixed larger `universe' $\vU \supseteq \vS$ of separations, and at least one of $\vr \wedge \vs$ and $\vr \vee \vs$ always lies in $\vS$. This holds for separation systems of the form $\vS_k(G) := \{\vs : \abs{s} < k\}$ of a graph $G$, where $\vr \wedge \vs$ and $\vr \vee \vs$ are opposite corner separations of the separations $\vr$ and $\vs$ \cite{DiestelBook16noEE}. 

Submodularity makes it possible to generalize the fundamental theorems of the tangle theory of graphs, such as the tree-of-tangles theorem and the tangle-tree duality theorem, to abstract separation systems \cite{AbstractTangles}*{Theorem 6 and 4}. In this more general setting, tangles are distinguished by a \emph{tree of tangles} which generalizes the concept of tree-decompositions.
\bigskip

In this paper we show that Erde's theorem generalizes too: we can still find a canonical tree of tangles that distinguishes a given set of tangles, and a refinement such that each node of the refined tree of tangles is either home to a tangle or too small for that:

\begin{mainresult}\label{cor:RefiningACanonicalToT}
    Let $\vS$ be a submodular separation system, and let $\cF$ be a friendly canonical set of stars in $\vS$.
	Then there are nested sets $\tilde{N} \subseteq N \subseteq S$ such that:
	\begin{itemize}
		\item $\tilde{N}$ is fixed under all automorphisms of $\vS$ and distinguishes all the $\cF$-tangles of $S$;
		\item every node of $N$ is either a star in $\cF$ or home to a tangle.
	\end{itemize}
\end{mainresult}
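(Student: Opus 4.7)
The plan is to combine the canonical tree-of-tangles theorem for submodular separation systems with the tangle-tree duality theorem in the same setting: use the former to produce a coarse distinguishing set $\tilde N$, then apply the latter locally at each node of $\tilde N$ that is not home to an $\cF$-tangle to produce the refinement $N$.

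\emph{Step 1 (global distinguishing set).} First apply the canonical tree-of-tangles theorem for submodular separation systems \cite{AbstractTangles}*{Theorem 6} to $\vS$ and $\cF$. This yields a nested set $\tilde N \subseteq S$, fixed under $\Aut(\vS)$, which distinguishes all $\cF$-tangles of $S$. This immediately supplies the first bullet of the conclusion.

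\emph{Step 2 (local duality at inessential nodes).} For each node $N_t$ of $\tilde N$ that is not home to an $\cF$-tangle, I would cut $\vS$ down to a local subsystem $\vS_t$ consisting of those separations that are nested with every element of $\tilde N$ and point into the part at $N_t$. One must check that submodularity of $\vS$ and the friendliness of $\cF$ descend to $\vS_t$. Since $\tilde N$ already distinguishes all $\cF$-tangles of $\vS$ and $N_t$ is inessential, any $\cF$-tangle of $\vS_t$ would extend to one of $\vS$ living at $N_t$, a contradiction. Hence $\vS_t$ admits no $\cF$-tangle, so the tangle-tree duality theorem \cite{AbstractTangles}*{Theorem 4} provides a nested set $N_t' \subseteq \vS_t$ each of whose nodes is a star in $\cF$.

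\emph{Step 3 (gluing).} Set $N := \tilde N \cup \bigcup_t N_t'$, where the union runs over the inessential nodes $N_t$ of $\tilde N$. Each $N_t'$ lies in $\vS_t$ and so is nested with $\tilde N$; for distinct inessential nodes the sets $N_s', N_t'$ are nested with one another because they live in disjoint parts of the $\tilde N$-decomposition. Thus $N$ is nested and contains $\tilde N$. Every node of $N$ is either an essential node of $\tilde N$ (hence home to a tangle) or a node of some local $N_t'$ (hence a star in $\cF$), giving the second bullet.

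\emph{Where the difficulty lies.} The heart of the proof is Step~2: constructing the local subsystem $\vS_t$ so that submodularity and the friendliness of $\cF$ are preserved, that $\cF$-tangles of $\vS_t$ correspond bijectively to $\cF$-tangles of $\vS$ that live at $N_t$, and that the output of the local duality theorem glues back with $\tilde N$ without introducing new crossings. In the graph case Erde handled this by passing to torsos of the tree-decomposition; producing the right abstract analogue of a torso in an arbitrary submodular separation system is where the bulk of the technical work will sit.
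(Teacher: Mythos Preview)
Your outline has the right overall shape, but Step~2 is where the argument actually lives, and the torso-style localisation you sketch is neither what the paper does nor something that obviously works. The specific obstacles are: (a) ``friendly'' includes ``closed under shifting'', which is a \emph{global} property of $\cF$ in $\vS$ referring to emulation and to suprema taken in the ambient universe; there is no reason this survives restriction to your $\vS_t$. (b) Your claim that an $\cF$-tangle of $\vS_t$ extends to an $\cF$-tangle of $\vS$ living at $N_t$ is a genuine lemma, not a triviality: a consistent orientation of $S_t$ need not extend uniquely or consistently to $S$, and avoidance of $\cF$-stars inside $\vS_t$ does not automatically give avoidance in $\vS$. (c) Your description of Erde's graph argument as ``passing to torsos'' is not accurate; Erde already works globally.

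The paper's route is different and avoids localisation entirely. For an inessential node $\sigma=\{\vs_1,\dots,\vs_n\}$ it enlarges $\cF$ to $\bar\cF:=\cF\cup\{\{\xv\}:\sv_i\le\xv\text{ for some }i\}$, checks that $\bar\cF$ is still standard and closed under shifting, and applies tangle--tree duality to the \emph{whole} of $\vS$ with $\bar\cF$; there is no $\bar\cF$-tangle precisely because $\sigma$ is inessential. One then uses the shifting lemma to push the leaf separations of the resulting $S$-tree down to the $\vs_i$ themselves. For this shifting step one needs that each $\vs_i$ emulates the relevant leaf separation, and this is exactly where the notion of a separation being \emph{closely related} to a profile enters. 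The substantial technical content of the paper (Lemmas~4.2--4.5 and Corollary~4.7) is the verification that the specific canonical tree set $N(\vS,\cP)$ of Elbracht--Kneip has this property: every separation in an inessential node has its inverse closely related to some profile in~$\cP$. None of this appears in your proposal, and it is not replaceable by a generic restriction-to-subsystem argument.
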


\cref{cor:RefiningACanonicalToT} is a direct generalization of Erde's Refinement Theorem to abstract separation systems. Like Erde's result, \cref{cor:RefiningACanonicalToT} asserts the existence of a canonical tree of tangles that can be refined so that all inessential nodes become stars in $\cF$. 

As remarked after the statement of Erde's theorem, it is in fact possible in separation systems of the form $S_k$, to refine \emph{every} tree of tangles in which every separation distinguishes a pair of $\cF$-tangles efficiently. It is therefore natural to ask whether we can get some analogue of this for abstract separation systems as well.

But we run into a subtle difficulty here: as pointed out earlier, we cannot hope to refine every tree set $\tilde{N}$ without imposing further conditions on the separations inside $\tilde{N}$. But the notion of efficiency, to which this condition refers in the case of Erde's theorem, is not defined in general as it appeals to the `order' of separations.
Instead, we will require the separations to be `good': a property that is a structural generalization of efficiently distinguishing a pair of tangles to abstract separation systems.

As long as we focus on good separations, we can show the following variant of \cref{cor:RefiningACanonicalToT}:

\begin{mainresult}\label{thm:RefiningGoodToTs}
	Let $\vS$ be a submodular separation system, and let $\cF$ be a friendly set of stars in $\vS$. Further, let $\tilde{N} \subseteq S$ be a nested set of good separations that distinguishes all $\cF$-tangles of $S$.  
	Then there exists a nested set $N \subseteq S$ with $\tilde{N} \subseteq N$ such that every node of $N$ is either a star in $\cF$ or home to an $\cF$-tangle.
\end{mainresult}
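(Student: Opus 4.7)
The plan is to refine $\tilde{N}$ node-by-node: for every node of $\tilde{N}$ that is not home to an $\cF$-tangle, locally refine inside the node by applying the abstract tangle-tree duality theorem from \cite{AbstractTangles}. Concretely, let $\sigma$ denote the consistent orientation of $\tilde{N}$ corresponding to an inessential node, and consider the subsystem $\vS_\sigma \subseteq \vS$ of separations $\vs \in \vS$ whose inverse is not dominated by any element of $\sigma$; informally, $\vS_\sigma$ consists of the separations that live inside the node. The goal is to find a nested set $N_\sigma \subseteq \vS_\sigma$ all of whose nodes are stars in $\cF$, and then to set $N := \tilde{N} \cup \bigcup_\sigma N_\sigma$, where $\sigma$ ranges over the inessential nodes of $\tilde{N}$.

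For this plan to work, three things must be checked. First, $\vS_\sigma$ should itself be a submodular separation system satisfying the hypotheses of the tangle-tree duality theorem. Submodularity of $\vS$ gives infima and suprema in the ambient universe, and when a corner separation happens to fall outside $\vS_\sigma$ one would shift it back by taking the join with an appropriate separation of $\sigma$; here the assumption that the separations in $\tilde N$ are \emph{good} should enter, since goodness is the structural analogue of "efficient distinguishing" that guarantees such shifts stay inside $\vS$. The friendliness of $\cF$ should likewise ensure that the restricted star set $\cF \cap 2^{\vS_\sigma}$ is still friendly in $\vS_\sigma$.

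Second, $\vS_\sigma$ must contain no $\cF$-tangle: any $\cF$-tangle of $\vS_\sigma$ would extend, by orienting each $\vs \in \vS \setminus \vS_\sigma$ along the direction forced by $\sigma$, to an $\cF$-tangle of $\vS$ living in the node $\sigma$, contradicting inessentiality. Applying the abstract tangle-tree duality theorem to $\vS_\sigma$ then produces the desired nested set $N_\sigma$. Third, the combined set $N$ must be nested, with every node either a star in $\cF$ or home to an $\cF$-tangle. Two separations of the same $N_\sigma$ are nested by construction; two of $N_\sigma$ and $N_{\sigma'}$ are nested through any element of $\tilde{N}$ that distinguishes $\sigma$ and $\sigma'$; and separations in $N_\sigma$ are nested with all of $\tilde{N}$ because they lie inside the node $\sigma$. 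The nodes of $N$ inside an inessential node of $\tilde{N}$ are then the nodes of some $N_\sigma$, each a star in $\cF$; the nodes of $N$ at an essential node of $\tilde{N}$ are unchanged and remain home to an $\cF$-tangle.

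The main obstacle, I expect, is the first point: establishing submodularity of $\vS_\sigma$ without any numerical notion of "order" to appeal to. In the graph case one simply argues that corner separations obtained by shifting along efficient separations have no higher order than the originals and thus remain in $\vS_k$. In the abstract setting no such numerical quantity is available, and the appropriate structural content must be extracted from the definition of a good separation. Formulating the relevant shifting lemma precisely, and checking that the friendly stars interact correctly with these shifts so that $(\vS_\sigma, \cF \cap 2^{\vS_\sigma})$ honestly meets the hypotheses of the tangle-tree duality theorem, is where the bulk of the technical work will lie.
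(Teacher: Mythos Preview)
Your plan differs substantially from the paper's, and the obstacle you flag at the end is exactly the one the paper circumvents --- by never restricting the separation system at all.

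The paper does not define a subsystem $\vS_\sigma$. Instead, for each inessential node $\sigma=\{\vs_1,\ldots,\vs_n\}$ of $\tilde N$ it enlarges $\cF$ to $\bar\cF:=\cF\cup\{\{\xv\}:\sv_i\le\xv\text{ for some }i\}$ and applies the tangle--tree duality theorem to the \emph{full} system $\vS$ with $\bar\cF$. Since $\sigma$ is inessential there is no $\bar\cF$-tangle, so an $S$-tree over $\bar\cF$ exists; one takes such a tree with as many of the $\vs_i$ among its leaf separations as possible. Goodness now enters not through submodularity of any subsystem but through \emph{emulation}: if $\vs_i$ is not yet a leaf, the $\cF$-tangle $P_i$ to which $\sv_i$ is closely related must live at some leaf whose separation $\vx_i$ satisfies $\sv_i\le\xv_i$, and ``$\sv_i$ closely related to $P_i$'' precisely says that $\vs_i$ emulates $\vx_i$. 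One shift (Lemma~4.2 of \cite{TangleTreeAbstract}) then replaces that leaf by $\vs_i$, contradicting maximality. Once all $\vs_i$ are leaves, the resulting nested set $N_\sigma$ is the local refinement, and $N:=\tilde N\cup\bigcup_\sigma N_\sigma$ works as in your third step.

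Your route via a restricted $\vS_\sigma$ would need that system to be genuinely submodular, but your proposed fix --- shifting corner separations back along elements of $\sigma$ --- does not produce actual infima or suprema in $\vS_\sigma$, so the hypotheses of the duality theorem are not met; and even granting that, you would still need $\cF\cap 2^{\vS_\sigma}$ to be standard and closed under shifting \emph{in} $\vS_\sigma$, and your tangle-extension step to preserve consistency and $\cF$-avoidance. None of this is obviously false, but the paper's approach sidesteps all of it: submodularity of $\vS$ is kept for free, and goodness is spent only to guarantee the single shifting step.
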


We remark that in the case where the separation system comes from a graph $G$, every separation which efficiently distinguishes a pair of tangles in $G$ is good.
We will show in \cref{sec:ANearlyCanonicalToTWithGoodSeps} that there always exist tangle-distinguishing tree sets $\tilde{N}$ that contain only good separations.

\section{Abstract separation systems and tangles}\label{sec:ASS}

In this section we give a short overview of the important objects, definitions and theorems that we are going to use later. For a more detailed introduction to abstract separation systems and their tangles we refer the reader to \cite{AbstractTangles} and \cite{ASS}.
The definitions are taken from \cites{TangleTreeAbstract, AbstractTangles, ASS, ProfilesNew, TreeSets}. We use the basic graph-theoretic notions from \cite{DiestelBook16noEE}.

\subsection{Separation systems}

A \emph{separation system} $\vS = (\vS, \leq, ^*)$ is a partially ordered set $(\vS, \leq)$ with an order-reversing involution $^*$, i.e.\ if $\vs \leq \vr \in \vS$, then $(\vs)^* \geq (\vr)^*$. We denote $(\vs)^*$ by $\sv$ and $(\sv)^*$ by $\vs$. The elements $\vs, \sv \in \vS$ are called \emph{oriented separations}. An \emph{unoriented separation} is a set $s := \{\vs, \sv\}$ for some~$\vs \in \vS$ and~$\vs$,~$\sv$ are the \emph{orientations} of $s$. Note that there are no default orientations: once we denoted one orientation by $\vs$ the other one will be $\sv$, and vice versa. The set of all sets $\{\vs, \sv\} \subseteq \vS$ is denoted by $S$. We will use terms defined for unoriented separations also for oriented ones and vice versa if that is possible without causing ambiguities. Moreover, if the context is clear, we will simply refer to both oriented and unoriented separations as `separations'.

Two unoriented separations $r, s \in S$ are \emph{nested} if they have orientations that can be compared, otherwise they \emph{cross}. 
If $r, s \in S$ are nested, then both $r$ and $\vr$ are \emph{nested} with both of $s, \vs$. Analogously, if $r, s \in S$ cross, then both $r, \vr$ \emph{cross} both of $s, \vs$.
A set of separations is \emph{nested} if all its elements are pairwise nested.
Two sets $R, R'$ of separations are \emph{nested} if every element of $R$ is nested with every element of $R'$.

If a separation $s \in S$ satisfies $\vs = \sv$, we call $s$ \emph{degenerate}. 
A separation $\vs \in \vS$ is \emph{trivial} in $\vS$ if there exists a separation $\vr \in \vS$ such that $\vs < \vr$ and $\vs < \rv$. 
A set $\cF \subseteq 2^{\vS}$ of subsets of $\vS$ is \emph{standard (for $\vS$)} if~$\{\rv\} \in \cF$ for every trivial separation $\vr \in \vS$.
A separation $\vs \in \vS$ is \emph{small} and $\sv$ \emph{co-small} if $\vs \leq \sv$.

A set $\sigma \subseteq \vS$ of non-degenerate separations is called a \emph{star} if for any $\vr, \vs \in \sigma$ it holds that $\vr \leq \sv$.

We call a separation system $(\vU, \leq, ^*)$ a \emph{universe} of separations, and denote it with $\vU = (\vU, \leq, ^*, \wedge, \vee)$, if it is a lattice, i.e.\ if any two separations $\vr, \vs \in \vU$ have a \emph{supremum} $\vr \vee \vs$ and an \emph{infimum} $\vr \wedge \vs$ in $\vU$.

Given two separations $r, s \in U$ we call the four separations $\{(\vr \vee \vs), (\vr \vee \vs)^*\}$, $\{(\vr \wedge \vs), (\vr \wedge \vs)^*\}$, $\{(\rv \vee \vs), (\rv \vee \vs)^*\}$ and $\{(\rv \wedge \vs), (\rv \wedge \vs)^*\}$ their \emph{corner separations}. A simple but quite useful observation about corner separations is the following:

\begin{LEM}{\cite{ASS}*{Lemma 3.2}}\label{lem:Fischlemma}
	Let $r, s \in U$ be two crossing separations. Every separation
	$t \in U$ that is nested with both $r$ and $s$ is also nested with all four corner separations of $r$ and $s$.
\end{LEM}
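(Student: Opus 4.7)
The plan is to show that $t$ is nested with each of the four corner separations of $r$ and $s$ by a short case analysis, using only the monotonicity of $\vee$ and $\wedge$ in the lattice $\vU$ together with the de Morgan--type identity $(\vr \vee \vs)^* = \rv \wedge \sv$, which is forced on any universe by $^*$ being an order-reversing involution.

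I would fix one corner, say $c$ with orientations $\vc := \vr \vee \vs$ and $\cv := \rv \wedge \sv$, and show that $t$ is nested with $c$; the other three corners are then handled verbatim after replacing $\vr$ with $\rv$ and/or $\vs$ with $\sv$ throughout, since $r$ crosses $s$ means that \emph{no} orientation of $r$ is comparable to any orientation of $s$, so the forthcoming contradictions go through for all four choices. After possibly interchanging $\vt$ with $\tv$, I may assume the nestedness of $t$ with $r$ takes the form $\vt \leq \vr$ or $\vt \leq \rv$.

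I would then branch on how $\vt$ relates to $s$. In the ``aligned'' case, where $\vt$ also lies below some orientation of $s$, the conclusion is immediate by lattice monotonicity and de Morgan: for example, $\vt \leq \vr$ together with $\vt \leq \vs$ gives $\vt \leq \vr \vee \vs = \vc$, while $\vt \leq \rv$ together with $\vt \leq \sv$ gives $\vt \leq \rv \wedge \sv = \cv$; the two mixed pairings work similarly, in each case landing either below $\vc$ or below $\cv$. In the ``opposite'' case, where some orientation of $s$ lies below $\vt$, transitivity through $\vt$ instantly produces a comparison between an orientation of $r$ and an orientation of $s$ (e.g., $\vt \leq \vr$ and $\vs \leq \vt$ give $\vs \leq \vr$), contradicting the assumption that $r$ crosses $s$. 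So only the aligned subcases can occur, and in each $\vt$ is comparable with $\vc$ or $\cv$, whence $t$ is nested with $c$.

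The main obstacle is purely organizational: keeping track of which orientations of $r$, $s$, and $t$ appear in each subcase. Once one records the decisive observation that every ``non-aligned'' combination would force $r$ and $s$ to be nested, the remaining cases collapse to routine applications of the universal properties of $\vee$ and $\wedge$, and the same argument handles all four corners uniformly.
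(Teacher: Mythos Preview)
Your argument is correct. The paper does not give its own proof of this lemma but merely cites it from \cite{ASS}, so there is nothing to compare against directly; your case analysis is the standard one and works as written. One minor simplification: once you have $\vt \le \vr$ (your Case~A), you already get $\vt \le \vr \le \vr \vee \vs = \vc$ without looking at $s$ at all, so only the case $\vt \le \rv$ actually requires branching on the relation between $\vt$ and $s$.
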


A function $\vert\! \cdot\! \vert: \vU \rightarrow \R_{\geq 0}$ is called an \emph{order function} of $\vU$ if $\abs{\vs} = \abs{\sv}$ for all $s \in U$. If $\vU$ comes with an order function that is \emph{submodular}, i.e.\
\[
\abs{\vr \vee \vs} + \abs{\vr \wedge \vs} \leq \abs{\vr} + \abs{\vs} \text{ for all } \vr, \vs \in \vU,
\]
then we call $\vU$ a \emph{submodular} universe. $\abs{\vs}$ is then the \emph{order} of $\vs$ and $s$. For an integer $k > 0$, the induced sets $\vS_k := \{\vs \in \vU: \abs{\vs} < k\}$ are separation systems on their own. Though they need not to be universes, as in general $\vr \vee \vs$ and $\vr \wedge \vs$ for two separations $\vr, \vs \in \vS_k$ need not both lie in $\vS_k$. Note that we take the infimum and supremum here with respect to $\vU$. However, by the submodularity of $\vert\! \cdot\! \vert$, at least one of $\vr \vee \vs$ and $\vr \wedge \vs$ has to be contained in $\vS_k$.

This property motivates the following structural formulation of submodularity which can be applied to universes without making use of the external concept of an order function:

A separation system $\vS$ is \emph{inside} a universe $\vU$ if $\vS \subseteq \vU$ and the partial order and the involution on $\vS$ are the ones induced by $\vU$. 
Then, a separation system $\vS$ inside some universe $\vU$ is called \emph{submodular} if
\[
\text{for all } \vr, \vs \in \vS : \vr \vee \vs \in \vS \text{ or } \vr \wedge \vs \in \vS.
\]

\subsection{Profiles and tangles in abstract separation systems}\label{sec:profiles}

An \emph{orientation} of a set $S$ of unoriented separations is a set $O \subseteq \vS$ which contains every degenerate separation from $\vS$ and exactly one orientation~$\vs$ or~$\sv$ of every non-degenerate separation in $S$. A subset $O \subseteq \vS$ is \emph{consistent} if it does not contain both $\rv$ and $\vs$ whenever $\vr < \vs$ for distinct $r, s \in S$. 
If $\cO$ is a set of consistent orientations of $S$, then we call a star~$\sigma \subseteq \vS$ \emph{essential (for $\cO$)} if $\sigma \subseteq O$ for some $O \in \cO$. Otherwise $\sigma$ is called \emph{inessential (for $\cO$)}. 

A non-degenerate separation $s \in S$ \emph{distinguishes} two orientations $O_1$ and $O_2$ of $S$ if $O_1$ and $O_2$ orient~$s$ differently. A set of separations $N \subseteq S$ \emph{distinguishes} a set $\cO$ of orientations if any two distinct orientations in $\cO$ are distinguished by some separation in $N$.
In the case of a submodular universe $\vU$, a separation~$s \in U$ distinguishes a pair of orientations $O_1$ and $O_2$ \emph{efficiently} if it distinguishes them and $O_1$ and $O_2$ cannot be distinguished by any separation of lower order.

Let $\cF \subseteq 2^{\vS}$ be a set of subsets of $\vS$. 
An orientation $P$ of $S$ is an \emph{$\cF$-tangle (of $S$)} if $P$ is consistent and does not contain any element of~$\cF$ as a subset.

If $\vS$ is a separation system inside some universe $\vU$, we call a consistent orientation $P$ of $S$ a \emph{profile of $S$} if it satisfies that
\[
\text{for all $\vr, \vs \in P$ the separation $(\vr \vee \vs)^*$ does not lie in $P$.}
\]

\noindent A profile of $S$ which contains no co-small separation is \emph{regular}.

We can describe the profiles of $S$ as $\cF$-tangles as follows. Set $\cP_S := \{\{\vr, \vs, (\vr \vee \vs)^*\} : \vr, \vs \in \vS\} \cap 2^{\vS}$,
then the set of all $\cP_S$-tangles is exactly the set of all profiles of $S$. We say that $\cF$ is \emph{profile-respecting (for~$\vS$)} if every $\cF$-tangle of $S$ is a profile of~$S$.

In this paper, the profiles which we consider will typically be $\cF$-tangles for a standard and profile-respecting set of stars which contains $\{\rv\}$ for every small $\vr \in \vS$, and thus these profiles will be regular.

\subsection{Tree sets and \texorpdfstring{\mathintitle{S}}{S}-trees}

A \emph{tree set} $N$ is a nested set of unoriented separations without degenerate or trivial separations. 
A star $\sigma \subseteq \vN$ is called a \emph{node of} $N$ if there is a consistent orientation $O$ of $N$ such that $\sigma$ is the set of maximal elements in $O$ (note that in \cite{TreeSets} the nodes of $N$ are called `splitting stars'). 

Let $\vS$ be again a separation system. An \emph{$S$-tree} is a pair $(T,\alpha)$ of a (graph-theoretical) tree $T$ and a map~$\alpha: \vE(T) \rightarrow \vS$ from the oriented edges $\vE(T) := \{\ve, \ev : e \in E(T)\}$ of $T$ to $\vS$
such that $\alpha(\ev) = \sv$ if $\alpha(\ve) = \vs$. 
If $x \in V(T)$ is a leaf of $T$ and $t \in V(T)$ its unique neighbour, then we call $\alpha(x,t) \in \vS$ a \emph{leaf separation (of $T$)}.

An $S$-tree $(T, \alpha)$ is \emph{over} a set $\cF \subseteq 2^{\vS}$ if $\{\alpha(t',t) : (t',t) \in \vE(T)\} \in \cF$ for every node $t \in V(T)$. If $\cF$ is a set of stars in $\vS$, then we call $\sigma_t := \{\alpha(t',t) : (t',t) \in \vE(T)\} \subseteq \vS$ for a node $t \in V(T)$ the \emph{star associated with $t$ (in $T$)}.

An $S$-tree $(T, \alpha)$ is called \emph{irredundant} if there is no node $t \in V(T)$ with two neighbours $t', t'' \in V(T)$ such that $\alpha(t, t') = \alpha(t, t'')$. If $(T, \alpha)$ is an irredundant $S$-tree over a set of stars, then $\alpha$ preserves the natural ordering on $\vE(T)$, i.e.\ $\ve \leq \vf$ if and only if $\alpha(\ve) \leq \alpha(\vf)$, where $\ve = (v_1, v_2) \leq \vf = (w_1, w_2)$ if and only if the unique path in $T$ from $\{v_1, v_2\}$ to $\{w_1, w_2\}$ starts in $v_2$ and ends in $w_1$ \cite{TangleTreeAbstract}*{Lemma 2.1}.

In this paper, we will only consider $S$-trees over sets of stars in $\vS$. 
Furthermore, we will always assume, without explicitly stating it, that all the considered $S$-trees are irredundant. This is no restriction: for every $S$-tree $(T, \alpha)$ over a set $\cF$ of stars there is a subtree $T'$ of $T$ such that $(T', \alpha\hspace{-1mm}\restriction_{\vE(T')})$ is an irredundant~$S$-tree over $\cF$ \cite{TangleTreeAbstract}*{Lemma 2.3}.

Every $S$-tree over a set of stars induces a tree set $\vN := \text{im}(\alpha)$ via $\alpha$ if no $\alpha(\ve)$ is trivial or degenerate for some edge $\ve \in \vE(T)$.
We then say that $N$ \emph{corresponds} to that $S$-tree. On the other hand, if $N$ is a \emph{regular} tree set in $S$, i.e.\ $\vN$ does not contain any small separations, then one can obtain an $S$-tree $(T,\alpha)$ from $N$ as follows. We take the set of all splitting stars of $\vN$ as the vertex set of $T$ and $N$ as the edges of $T$ where a separation $s \in N$ is incident to the two unique splitting stars of $\vN$ that contain $\vs$ and $\sv$, respectively: 
\begin{THM}{\cite{TreeSets}*{Theorem 6.9}}\label{thm:FromTreeSetsToSTrees}
    Let $\vS$ be a separation system and $N \subseteq S$ a regular tree set. Then there exists an $S$-tree $(T, \alpha)$ with $\text{\emph{im}}(\alpha) = \vN$ such that the stars associated with nodes of $T$ are precisely the nodes of $\vN$.
\end{THM}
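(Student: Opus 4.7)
My plan is to construct $(T, \alpha)$ by the explicit recipe hinted at just before the statement. Take $V(T)$ to be the set of all nodes of $\vN$. Take $N$ itself as the (unoriented) edge set of $T$, declaring each $s \in N$ incident to the (soon-to-be-unique) nodes $\sigma, \tau$ with $\vs \in \sigma$ and $\sv \in \tau$. Define $\alpha$ on the oriented edge from $\sigma$ to $\tau$ corresponding to $s$ to be $\sv$; this is forced by the requirement $\alpha(\ev) = (\alpha(\ve))^*$ together with the goal that the star associated with each node should equal that node.

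The key preliminary step is to show that every $\vs \in \vN$ belongs to exactly one node. For existence, extend $\{\vs\}$ to a maximal consistent orientation $O$ of $N$ via Zorn's lemma; regularity of $N$ (no small separations) together with the absence of trivial separations in the tree set $N$ ensures that no element of $O$ strictly dominates $\vs$, so $\vs$ survives as a maximal element of $O$ and hence belongs to the star of maximal elements, which is a node by definition. For uniqueness, two consistent orientations of $N$ that both have $\vs$ as a maximal element must, by nestedness and the maximality condition, coincide on every separation of $N$ and thus determine the same node.

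The bulk of the proof is verifying that the resulting graph $T$ is a tree. Acyclicity should follow because a cycle $\sigma_0, s_0, \sigma_1, s_1, \ldots, s_{n-1}, \sigma_0$ would yield, via the natural edge ordering preserved by $\alpha$ in irredundant $S$-trees over stars, a closed chain of strict inequalities in $\vN$ which the poset cannot accommodate without producing a trivial or degenerate separation in $N$. Connectedness I would show by picking, for any two distinct nodes $\sigma, \tau$, a separation $s \in N$ they orient differently (such $s$ exists: nodes are orientations) with $\vs \in \sigma$, $\sv \in \tau$, chosen so that $\vs$ is maximal among all separations oriented differently in this way; the neighbour of $\sigma$ across the edge $s$ is then a node agreeing with $\sigma$ on strictly more separations than $\tau$ does, which drives an inductive (or, in general, Zorn-type) path-building.

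Once $T$ is established as a tree, $\text{im}(\alpha) = \vN$ is immediate from the definition of $\alpha$, and the star associated with each node $t \in V(T)$ coincides with $t$ viewed as a splitting star of $\vN$, essentially by construction: the separations $\alpha(t', t)$ for neighbours $t'$ of $t$ are exactly the separations $\vs \in \vN$ that are maximal in the consistent orientation witnessing $t$ as a node. The main obstacle I anticipate is the connectedness step: in an abstract, possibly infinite separation system there is no natural finite distance between two nodes to induct on, and two nodes may disagree on infinitely many separations; additional care, probably via a transfinite argument or a carefully chosen witnessing separation of maximal rank, will be needed to join arbitrary pairs of nodes by a finite path.
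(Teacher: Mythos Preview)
The paper does not prove this theorem at all: it is quoted as \cite{TreeSets}*{Theorem 6.9} and used as a black box, so there is no ``paper's own proof'' to compare against. Your construction is exactly the one the paper sketches before the statement (and is indeed the construction from \cite{TreeSets}), so in that sense your approach is the intended one.

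That said, your existence argument for ``every $\vs\in\vN$ lies in some node'' has a genuine gap. Extending $\{\vs\}$ by Zorn's lemma to a maximal consistent subset $O$ of $\vN$ does \emph{not} force $\vs$ to be a maximal element of $O$: if $\vs<\vt$ in $\vN$, nothing prevents Zorn from putting $\vt$ into $O$ as well, and your appeal to regularity and absence of trivial separations does not rule this out. The standard fix is to build the orientation explicitly rather than by Zorn: for each $t\in N$ with $t\neq s$, nestedness gives comparable orientations, and absence of trivial and degenerate elements in the tree set $N$ ensures that exactly one orientation of $t$ fails to lie strictly above $\vs$; choosing those orientations (and $\vs$ itself) yields a consistent orientation of $N$ in which $\vs$ is maximal. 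Once you have this, uniqueness of the node containing $\vs$, acyclicity, and (for finite $N$, which is the ambient setting of this paper) connectedness follow along the lines you indicate; you are right that for infinite $N$ the connectedness step needs the extra care carried out in \cite{TreeSets}.
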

\noindent This motivates the name `nodes' for the splitting stars of $N$. It is shown in \cite{TreeSets} that the $S$-tree from \cref{thm:FromTreeSetsToSTrees} is unique up to isomorphisms.
Therefore, we say that $T$ is \emph{the} $S$-tree corresponding to $N$.

We say that a consistent orientation $O$ of $S$ \emph{lives} at a node $\sigma$ of a tree set $N \subseteq S$ (or equivalently $\sigma$ \emph{is home} to $O$) if $\sigma \subseteq O$. Similarly, we say that $O$ \emph{lives} at a node $t \in V(T)$ of an $S$-tree $(T, \alpha)$ if $\sigma_t \subseteq O$.
It is easy to see that every consistent orientation of $S$ lives at a (unique) node of any regular tree set $N$.

Given a set of consistent orientations $\cO$ of $S$ and a tree set $N$ that distinguishes $\cO$, we call a node of $N$ \emph{essential (for $\cO$)} if there is an orientation in $\cO$ which lives at that node and otherwise \emph{inessential (for $\cO$)}.

An \emph{isomorphism} between two separation systems $\vS$ and $\vS'$ is a bijection $\varphi: \vS \rightarrow \vS'$ which is order-respecting and commutes with their involutions. 
We call a nested set $N := N(\vS, \cP) \subseteq S$ that distinguishes some set $\cP$ of profiles of $S$ \emph{canonical} if it is invariant under isomorphisms of separation systems, i.e.\ if the map $(S,\cP) \mapsto N$ commutes with all isomorphisms $\phi: \vS \rightarrow \vS'$ so that $\phi(N(\vS, \cP)) = N(\vS', \phi(\cP))$ with $\phi(\cP) := \{\phi(P) : P \in \cP\}$.
Note that if the set $\cP$ is fixed under automorphisms of $\vS$, i.e.\ $\phi(\cP) = \cP$ for all automorphisms $\phi$ of $\vS$, and $N$ is canonical, then this implies that $\phi(N) = N$ for all automorphisms $\phi$ of $\vS$.

We will need the following canonical version of the tree-of-tangles theorem:

\begin{THM}{\cite{CanonicalToTSubmodular}*{Theorem 2}}\label{thm:Tc}
	Let $\vS$ be a submodular separation system and $\cP$ a set of profiles of $S$. Then there is a nested set $N = N(\vS,\cP) \subseteq S$ which distinguishes $\cP$.
	
	This $N(\vS,\cP)$ can be chosen canonically: if $\varphi: \vS \rightarrow \vS'$ is an isomorphism of separation systems and $\cP' := \{\varphi(P) : P \in \cP\}$, then $\varphi(N(\vS,\cP)) = N(\vS',\cP')$.
\end{THM}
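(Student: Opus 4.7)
The plan is to realize $N(\vS,\cP)$ as a canonically selected subfamily of the separations distinguishing pairs of profiles in $\cP$, iteratively uncrossed using submodularity and the profile axiom. For each unordered pair $\{P_1,P_2\}$ of distinct profiles, let $D(P_1,P_2)\subseteq S$ be the set of separations distinguishing $P_1$ from $P_2$, and set $D:=\bigcup D(P_1,P_2)$. The goal is to extract from $D$ a nested subfamily $N$ that still distinguishes every pair, using only the poset structure of $\vS$, its involution, its meet and join, and the set $\cP$, so that the construction commutes with every isomorphism of $(\vS,\cP)$.

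The driving technical step is the standard corner argument. Suppose $r,s\in D$ cross and both distinguish a pair $(P_1,P_2)$, say with $\vr,\vs\in P_1$ and $\rv,\sv\in P_2$. Submodularity of $\vS$ forces at least one of $\vr\vee\vs$ and $\vr\wedge\vs$ to lie in $\vS$, and the profile axiom applied on both sides then forces a specific corner separation of $r$ and $s$ to still distinguish $(P_1,P_2)$: the first profile yields $(\vr\vee\vs)^*\notin P_1$, the second gives $(\rv\vee\sv)^*=\vr\wedge\vs\notin P_2$, and consistency pins down the orientations of the available corner inside $P_1$ and $P_2$. By \cref{lem:Fischlemma}, this corner is nested with every separation already nested with both $r$ and $s$, so replacing $r$ or $s$ by it cannot introduce new crossings anywhere in $D$.

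For canonicity I would perform the uncrossing globally and symmetrically rather than choosing crossings one at a time: at each stage form the set of \emph{all} corner separations prescribed by the corner argument across every crossing pair in the current family, then pass to the reduced family, and iterate. Meets, joins, the involution and the nested relation are preserved by any isomorphism $\varphi\colon\vS\to\vS'$, and $\varphi$ maps $\cP$ to $\cP'$ and hence $D(P_1,P_2)$ to $D(\varphi(P_1),\varphi(P_2))$, so the whole construction is manifestly $\varphi$-equivariant and gives $\varphi(N(\vS,\cP))=N(\vS',\cP')$.

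The main obstacle I anticipate is termination together with preservation of the distinguishing property, especially when $\vS$ is infinite. I would handle this via transfinite induction with unions at limit stages, exploiting that each global reduction strictly shrinks the set of crossing pairs in the current family (by \cref{lem:Fischlemma}) without introducing new ones, and that every corner substitution is chosen to retain a distinguisher for the pair it replaces. Once the chain stabilises at a nested family, the distinguishing property descends to the final canonical set $N(\vS,\cP)$, since no reduction has ever merged two profiles.
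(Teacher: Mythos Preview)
Your approach is genuinely different from the paper's, and it has real gaps.

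The central problem is that your corner argument is stated only for crossing $r,s$ that \emph{both} distinguish the \emph{same} pair $(P_1,P_2)$. In the family $D$ you start from, a typical crossing pair will distinguish different pairs of profiles, and then there is no single corner that is guaranteed to distinguish both pairs simultaneously. The usual fix---keep one of $r,s$ and replace the other by a corner---involves a choice, which is exactly what canonicity forbids. Your proposed remedy, ``form the set of all corner separations \ldots\ then pass to the reduced family,'' is not well-defined: if you discard the originals you may lose distinguishers for pairs not covered by the available corner; if you keep them, nothing shrinks. Relatedly, your termination claim that ``each global reduction strictly shrinks the set of crossing pairs'' does not follow from \cref{lem:Fischlemma}. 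That lemma only says a corner of $r,s$ is nested with every $t$ that was already nested with \emph{both} $r$ and $s$; a corner may well cross some $t$ that was nested with $r$ but crossed $s$, so the crossing count need not drop.

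The construction the paper actually uses (recalled as \cref{constr:Tc}) avoids uncrossing altogether. For each profile $P$ one takes the set $M_P$ of \emph{maximal $P$-exclusive} separations and sets $\vs_P:=\inf(M_P)$; these infima exist in $\vS$, are still $P$-exclusive, and are pairwise nested. One then discards the profiles with nonempty $M_P$, restricts to the separations nested with the chosen $\vs_P$'s, and iterates. Canonicity holds because maximality and exclusivity are poset-and-involution notions, and because $\inf(M_P)$, once it lies in $\vS$, coincides with the infimum taken inside the poset $\vS$ and is therefore preserved by any separation-system isomorphism. This is a direct, terminating construction rather than an uncrossing limit, and it sidesteps precisely the ambiguity that breaks your global-replacement step.
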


\subsection{Tangle-tree duality}

Let $\vS$ be a separation system inside some universe, and let $\vr$ be a separation in $\vS$ which is neither degenerate nor trivial, and set $S_{\geq \vr} := \{x \in S : \vx \geq \vr \text{ or } \xv \geq \vr\}$. We say that \emph{$\vs$ emulates $\vr$ in $\vS$} if $\vs \geq \vr$ and for every $\vx \in \vS_{\geq \vr} \setminus \{\rv\}$ with $\vr \leq \vx$ it holds that $\vs \vee \vx \in \vS$.
We can then define a function $\shiftingfct{\vs}{\vr} : \vS_{\geq \vr} \setminus \{\rv\} \rightarrow \vS_{\geq \vr} \setminus \{\rv\}$ by
\[
\shiftingfct{\vs}{\vr}(\vx) := \vx \vee \vs  \text{ and } \shiftingfct{\vs}{\vr}(\xv) := (\vx \vee \vs)^* \text{ for } \vx \geq \vr.
\]
If $(T,\alpha)$ is an $S$-tree and $\vs$ emulates $\vr$, we can define $\alpha' := \shiftingfct{\vs}{\vr} \circ \alpha$. It is then easy to see that $(T, \alpha')$ is again an $S$-tree, and we call $(T, \alpha')$ the \emph{shift of $(T, \alpha)$ onto $\vs$}.

Given a set $\cF \subseteq 2^{\vS}$ of stars, we say that \emph{$\vs$ emulates $\vr$ in $\vS$ for $\cF$} if $\vs$ emulates $\vr$ in $\vS$ and for every star~$\sigma \in \cF$ with $\sigma \subseteq \vS_{\geq \vr} \setminus \{\rv\}$ that contains an element $\vt \geq \vr$ it holds that $\shiftingfct{\vs}{\vr}(\sigma) \in \cF$.

This property is crucial to the following lemma, which is a key tool in the proof of the tangle-tree duality theorem and which will also play an important role in the proofs of \cref{cor:RefiningACanonicalToT} and \ref{thm:RefiningGoodToTs}.

\begin{LEM}{\cite{TangleTreeAbstract}*{Lemma 4.2}}\label{lem:shifting}
	Let $\vS$ be a separation system, $\cF \subseteq 2^{\vS}$ a set of stars, and let $(T,\alpha)$ be a tight and irredundant $S$-tree over $\cF$. Further, let $\vr$ be a leaf separation of $(T, \alpha)$ which is neither trivial nor degenerate, let $\vs \in \vS$ emulate $\vr$ in $\vS$ for $\cF$, and consider $\alpha' := \shiftingfct{\vs}{\vr} \circ \alpha$. Then $(T,\alpha')$ is an $S$-tree over~$\cF \cup \{\{\sv\}\}$, in which $\{\sv\}$ is a star associated with a unique leaf.
\end{LEM}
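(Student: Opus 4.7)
The plan is to verify that $\alpha' = \shiftingfct{\vs}{\vr} \circ \alpha$ is a well-defined $S$-tree map whose every node-star lies in $\cF \cup \{\{\sv\}\}$, and that the leaf formerly carrying $\vr$ becomes the unique leaf carrying $\{\sv\}$.

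First I would use the fact that $(T,\alpha)$ is irredundant and over stars to apply \cite{TangleTreeAbstract}*{Lemma 2.1}, which tells us that $\alpha$ preserves the natural ordering on $\vE(T)$. Writing $\vr = \alpha(x,t)$ with $x$ the leaf and $t$ its unique neighbour, this forces every other edge $f$ of $T$ to have an orientation $\vf$ with $(x,t) \leq \vf$ and hence $\alpha(\vf) \geq \vr$, with the reverse orientation satisfying $\alpha(\fv) \leq \rv$. Thus every $\alpha$-value other than $\rv$ itself lies in $\vS_{\geq \vr}$, so $\shiftingfct{\vs}{\vr}$ is immediately applicable, with the unique involution-compatible extension $\rv \mapsto \sv$ handling the leaf edge $(t,x)$. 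The identity $\alpha'(\ev) = \alpha'(\ve)^*$ is then built into the definition of the shift.

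The core of the argument is to show that every shifted node-star $\sigma'_u$ lies in $\cF \cup \{\{\sv\}\}$. For the leaf $x$, $\sigma_x = \{\rv\}$ shifts to $\{\sv\}$. For any other node $u \in V(T)$, the edge of $u$ lying on the $x$--$u$ path contributes some $\vp \in \sigma_u$ with $\vp \geq \vr$; the star property then forces every other $\vq \in \sigma_u$ to satisfy $\vq \leq \vp^* \leq \rv$, whence $\qv \geq \vr$, giving $\sigma_u \subseteq \vS_{\geq \vr}$. Tightness combined with irredundancy rules out $\rv \in \sigma_u$, so $\sigma_u \subseteq \vS_{\geq \vr} \setminus \{\rv\}$, and the hypothesis that $\vs$ emulates $\vr$ in $\vS$ for $\cF$ then delivers $\shiftingfct{\vs}{\vr}(\sigma_u) \in \cF$. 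For uniqueness of the $\{\sv\}$-leaf, I would argue that any other leaf $y$ of $T$ with $\sigma'_y = \{\sv\}$ would force its sole incident $\alpha$-value to equal $\rv$, contradicting irredundancy.

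The main obstacle I anticipate is the clean handling of the excluded value $\rv$: one must rule out $\rv$ from every interior-node star $\sigma_u$, since otherwise the emulation hypothesis cannot be invoked and $\shiftingfct{\vs}{\vr}$ would need to be evaluated outside its stated domain. This is where the tightness of $(T,\alpha)$ earns its keep over mere irredundancy, tying the combinatorics of the tree to the order-theoretic structure on $\vS$.
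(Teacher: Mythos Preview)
The paper does not give its own proof of this lemma: it is quoted from \cite{TangleTreeAbstract}*{Lemma~4.2} and used as a black box. So there is nothing to compare your argument against within the present paper.

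Your sketch is the standard proof of the shifting lemma and is essentially correct. The one soft spot is the uniqueness argument at the end. You claim that another leaf $y$ with $\sigma'_y=\{\sv\}$ would force the $\alpha$-value at $y$ to equal $\rv$, contradicting irredundancy. But $\alpha'(y',y)=\alpha(y',y)\vee\vs=\sv$ only yields $\alpha(y',y)\le\sv$ together with $\vs\le\sv$; it does not pin down $\alpha(y',y)=\rv$. And even if it did, irredundancy only forbids two \emph{coinciding} edges at the \emph{same} node from having equal $\alpha$-value, so two distinct leaves both mapping to $\rv$ would not violate it. In the literature this wrinkle is handled exactly as the present paper remarks just after the lemma: one first passes (via \cite{TangleTreeAbstract}*{Lemma~2.4}) to a tight irredundant $S$-tree in which $\vr$ is not the image of any edge other than the leaf edge, which then makes the uniqueness immediate.
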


\noindent\emph{Tight} here means that for every node $t \in V(T)$ the star associated with $t$ does not contain the inverse of any of its non-degenerate elements. 

It is shown in \cite{TangleTreeAbstract}*{Lemma 2.4} that if there is an $S$-tree over $\cF$ with some set of leaf separations $\vr_i$ which are neither trivial nor degenerate, then there also exists an $S$-tree over $\cF$ which is tight and irredundant such that each $\vr_i$ is a leaf separation and not the image of any other edge.

A set $\cF$ of stars in $\vS$ is \emph{closed under shifting} if whenever $\vs \in \vS$ emulates some $\vr \in \vS$, then it also emulates $\vr$ in $\vS$ for $\cF$.
With this definition, we can now to state the \emph{tangle-tree duality theorem} in the version which we need later:
\begin{THM}{\cite{AbstractTangles}*{Theorem 4}}\label{thm:TTD}
	Let $\vU$ be a universe of separations and $\vS \subseteq \vU$ a submodular separation system. Let $\cF \subseteq 2^{\vS}$ be a set of stars which is standard for $\vS$ and closed under shifting. Then exactly one of the following holds:
	\begin{itemize}
		\item[(i)] There exists an $\cF$-tangle of $S$.
		\item[(ii)] There exists an $S$-tree over $\cF$.
	\end{itemize}
\end{THM}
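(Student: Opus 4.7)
The plan is to prove the two halves of this dichotomy separately. The claim that (i) and (ii) cannot both hold is the easier direction; the existence of an $S$-tree over $\cF$ whenever no $\cF$-tangle exists is the substantive content.

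For mutual exclusion, I would suppose that both an $\cF$-tangle $P$ and an $S$-tree $(T,\alpha)$ over $\cF$ exist. The tangle $P$ lifts via $\alpha$ to an orientation of the edges of $T$: direct each edge $\{u,v\}$ toward $v$ whenever $\alpha(u,v) \in P$. Since $(T,\alpha)$ is irredundant, $\alpha$ respects the natural edge-order on $\vE(T)$ (\cite{TangleTreeAbstract}*{Lemma 2.1}), and consistency of $P$ then forces the existence of a ``sink'' node $t$ at which every incident edge points inward, so $\sigma_t \subseteq P$. But $\sigma_t \in \cF$ by assumption, contradicting the $\cF$-tangle property of $P$.

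For the hard direction I would argue by contrapositive, assuming that no $S$-tree over $\cF$ exists and constructing an $\cF$-tangle $P$ of $S$. The idea is to define $P$ in terms of which separations admit auxiliary $S$-trees over $\cF \cup \{\{\vs\}\}$ with $\{\vs\}$ as a designated leaf star, so that failure of $P$ to be an $\cF$-tangle produces, via gluing two such auxiliary trees along their designated leaves, a genuine $S$-tree over $\cF$. Verifying that $P$ is indeed an $\cF$-tangle then splits into three tasks: that $P$ orients each $s \in S$; that $P$ is consistent; and that $P$ contains no element of $\cF$. The orientation task reduces directly to the gluing construction just sketched. For consistency and for $\cF$-freeness, the key tool is \cref{lem:shifting}: given an inconsistency $\vr<\vs$ with $\rv,\vs\in P$, or an $\cF$-star contained in $P$, the Shifting Lemma lets me reroute leaf separations of auxiliary trees onto larger targets, so that the resulting trees can again be glued into an $S$-tree over $\cF$---once more contradicting the standing hypothesis.

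The hard part will be making the shift-and-glue construction in the consistency and $\cF$-freeness steps go through. Each invocation of \cref{lem:shifting} requires showing that the larger target separation \emph{emulates} the original leaf separation \emph{in $\vS$ for $\cF$}: submodularity of $\vS$ delivers the ``in $\vS$'' half (ensuring that suprema such as $\vs \vee \vx$ fall back inside $\vS$), while closure of $\cF$ under shifting delivers the ``for $\cF$'' half (ensuring that shifted stars remain in $\cF$). Standardness of $\cF$ handles degenerate edge cases involving trivial separations, preventing the gluing from stalling on them. Keeping all auxiliary trees tight and irredundant throughout the construction---as required by the hypotheses of \cref{lem:shifting}---is the principal combinatorial bookkeeping burden.
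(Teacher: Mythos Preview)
This theorem is not proved in the paper: it is quoted from \cite{AbstractTangles}*{Theorem~4} and used as a black box, so there is no in-paper proof to compare your proposal against.

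That said, your outline matches the standard argument in the literature (as in \cite{TangleTreeAbstract} and \cite{AbstractTangles}): the easy direction via a sink node in $T$, and the hard direction by defining $P$ through the (non-)existence of rooted $S$-trees over $\cF\cup\{\{\vs\}\}$, then using submodularity (via separability, \cref{lem:SubmodularImpliesSeparabel}) to supply emulating separations and \cref{lem:shifting} together with closure under shifting to push auxiliary trees onto common leaves before gluing. One small point of care: in the orientation step you need both halves---that $P$ does not contain both $\vs$ and $\sv$ (gluing) \emph{and} that it contains at least one of them. The latter is where standardness of $\cF$ really enters, handling trivial and small separations so that single-edge trees over $\cF\cup\{\{\vs\}\}$ exist when needed; your sketch mentions standardness only for ``degenerate edge cases'' without pinning down this role. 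Apart from that, the plan is sound and in line with the source your paper cites.
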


The conditions of \cref{thm:TTD} may seem to be rather strong at first, but in practice they can typically be satisfied.
Diestel, Eberenz and Erde \cite{ProfileDuality} showed that any set $\cF \subseteq 2^{\vS}$ can be transformed into a standard set $\hat{\cF}$ of stars which is closed under shifting so that an orientation of $S$ is a regular $\hat{\cF}$-tangle if and only if it is a regular $\cF$-tangle:

\begin{LEM}{\cite{ProfileDuality}*{Lemma 11 \& 14}}\label{lem:MakingASetFFriendly}
    Let $\vS$ be a submodular separation system inside some universe and let~$\cF \subseteq 2^{\vS}$ be a standard set. Then there exists a standard set $\hat{\cF} \subseteq 2^{\vS}$ of stars that is closed under shifting such that an orientation of $S$ is a regular $\cF$-tangle if and only if it is a regular $\hat{\cF}$-tangle.
\end{LEM}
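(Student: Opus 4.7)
The plan is to construct $\hat{\cF}$ in two stages, mirroring the approach of \cite{ProfileDuality}: first turn $\cF$ into a standard set $\cF_*$ of stars with the same regular tangles, then close $\cF_*$ under the shifting operation without introducing or removing any regular tangle.

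For the first stage, I would process each $F \in \cF$ that is not already a star. Given $\vr, \vs \in F$ with $\vr \not\leq \sv$, submodularity of $\vS$ guarantees that at least one of the corner separations of $r$ and $s$ lies in $\vS$. I would replace $F$ by one or more sets obtained by substituting $\vr, \vs$ by appropriate corner separations, ensuring both that (i) any regular consistent orientation containing $F$ contains at least one of the new sets, and (ii) a well-founded invariant (for instance a lexicographic statistic based on submodular orders) strictly decreases. Iterating produces stars, and adding $\{\rv\}$ for every trivial $\vr \in \vS$ makes the final set $\cF_*$ standard. By construction, a regular consistent orientation avoids $\cF_*$ if and only if it avoids $\cF$.

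For the second stage, I would define $\hat{\cF}$ as the closure of $\cF_*$ under shifting. Setting $\cF_0 := \cF_*$ and
\[
\cF_{n+1} := \cF_n \cup \bigl\{ \shiftingfct{\vs}{\vr}(\sigma) : \sigma \in \cF_n,\ \vr \in \sigma,\ \vs \text{ emulates } \vr \text{ in } \vS \bigr\},
\]
put $\hat{\cF} := \bigcup_{n \geq 0} \cF_n$. The image of a star under $\shiftingfct{\vs}{\vr}$ is easily checked to remain a star, so $\hat{\cF}$ consists of stars, is standard (as $\cF_* \subseteq \hat{\cF}$), and is closed under shifting by construction. The crux is the equivalence of regular $\cF_*$-tangles and regular $\hat{\cF}$-tangles. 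One direction is immediate from $\cF_* \subseteq \hat{\cF}$. For the other, I would induct on $n$ and argue that if a regular $\cF_*$-tangle $O$ contains a shifted star $\sigma' = \shiftingfct{\vs}{\vr}(\sigma)$, then it already contains $\sigma$: the element $\vs \in \sigma' \cap O$ with $\vr \leq \vs$ forces $\vr \in O$ by consistency, and for every $\vt \in \sigma \setminus \{\vr\}$ the element $(\tv \vee \vs)^* \in \sigma' \cap O$ together with $\vs \in O$ forces $\vt \in O$ via consistency and regularity, ultimately contradicting $O$ being an $\cF_*$-tangle.

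The main obstacle will be the final inference in Stage 2: showing that $\vs, (\tv \vee \vs)^* \in O$ forces $\vt \in O$. Consistency alone does not suffice, and one must rule out $\tv \in O$ by invoking the regularity of $O$ (absence of co-small elements) together with the profile-respecting behaviour that a careful Stage 1 construction must build into $\cF_*$. Threading this step through the inductive closure, while verifying that the first-stage construction yields a $\cF_*$ which supports the argument, is the technical heart of the lemma.
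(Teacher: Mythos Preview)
The paper does not supply its own proof of this lemma; it is quoted from \cite{ProfileDuality} (Lemmas~11 and~14 there), so there is no in-paper argument to compare your attempt against. At the level of strategy, your two-stage plan---first uncross $\cF$ to stars, then close under shifting---matches the approach of the cited reference.

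The gap you flag in Stage~2 is genuine, however, and your proposed patch does not close it. From $\vs \in O$ and $(\tv \vee \vs)^* \in O$ one cannot deduce $\vt \in O$ using only consistency and regularity: supposing $\tv \in O$, no instance of the consistency axiom is violated (there is no pair $\va < \vb$ with both $\av$ and $\vb$ in $O$), and regularity only excludes co-small elements. What would be needed is exactly the profile inference $\tv, \vs \in O \Rightarrow \tv \vee \vs \in O$, contradicting $(\tv \vee \vs)^* \in O$; but $\cF$-tangles in the generality of the lemma need not be profiles. Your suggestion to bake ``profile-respecting behaviour'' into the Stage~1 output is not a proof: you do not say what this means concretely, and any construction that forces $\cF_*$-tangles to be profiles would change the class of tangles and break the claimed equivalence with regular $\cF$-tangles. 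In \cite{ProfileDuality} the passage to $\hat{\cF}$ is handled more carefully than the naive iterated closure you describe, so that closure under shifting holds by construction while the equivalence of regular tangles can be argued directly rather than by ``undoing'' shifts star by star. Pinning down that construction is the actual content of the lemma and is not captured by your sketch.
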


Especially, \cref{lem:MakingASetFFriendly} implies that the set $\cP_S$ can be transformed into a set $\hat{\cP}_S$ of stars which is closed under shifting such that the set of all $\hat{\cP}_S$-tangles of $S$ is precisely the set of all profiles of $S$.

\section{Refining inessential stars}\label{sec:RefiningInessStars}

In this section we prove \cref{thm:RefiningGoodToTs}, which is a generalization of Erde's work \cite{JoshRefining} to abstract separation systems. Let $\vS$ be a submodular separation system and $\cF$ some suitable set of stars in $\vS$. Then \cref{thm:RefiningGoodToTs} asserts that every tangle-distinguishing tree set of `good' separations can be refined so that each of its nodes is either a star in $\cF$ or home to an $\cF$-tangle of $S$.

The main tool in the proof of \ref{thm:ErdesRefThm} \cite{JoshRefining} is the so-called \emph{refining lemma} \cite{JoshRefining}*{Lemma~3.1}.
It asserts that given a submodular universe of separations $\vU$, an integer $k > 0$, and a suitable set $\cF$ of stars in $\vS_k$, there is, for every inessential star $\sigma \subseteq \vS_k$, an $S_k$-tree over $\cF \cup \{\{\sv\} : \vs \in \sigma\}$ -- as long as every separation in $\sigma$ distinguishes a pair of $\cF$-tangles of $S_k$ efficiently. 

Such an $S_k$-tree corresponds to a nested set of separations with the property that every node is either a star in $\cF$ or home to an $\cF$-tangle, where the latter case only occurs for singleton stars $\{\sv\}$ with $\vs \in \sigma$. Applying this to the inessential nodes of any canonical tree set $\tilde{N}$ that distinguishes the set of $\cF$-tangles efficiently, and joining those sets with the tree set $\tilde{N}$, then yields \ref{thm:ErdesRefThm}.

In this section, we will first generalize Erde's refining lemma to arbitrary submodular separation systems which will then already directly imply \cref{thm:RefiningGoodToTs}. 
\bigskip

The tangles which we consider in this section will always be $\cF$-tangles for a set $\cF$ of stars in $\vS$ which is standard, closed under shifting and contains $\{\rv\}$ for every small separation $\vr \in \vS$. If $\cF$ is additionally profile-respecting, then we call $\cF$ \emph{friendly}.
Note that requiring $\cF$ to be a friendly set of stars is not really a restriction. Indeed, if $\cF \subseteq 2^{\vS}$ is just \emph{any} set of sets, which not necessarily consists of stars, then by \cref{lem:MakingASetFFriendly}, we can turn $\cF$ into a standard set of stars $\hat{\cF}$ that is closed under shifting so that the set of all $\cF$-tangles is precisely the set of all $\hat{\cF}$-tangles. 
If it is not already the case that every $\cF$-tangle is a profile of $S$, then we can add $\hat{\cP}_S$ to $\hat{\cF}$ to obtain a friendly set of stars $\tilde{\cF} := \hat{\cF} \cup \hat{\cP}_S$ such that an orientation of $S$ is an $\tilde{\cF}$-tangle if and only if it is a profile and an $\cF$-tangle of $S$.

Finally, note that if $G$ is a graph and $\cT_k$ the set of sets whose exclusion defines $k$-tangles as such (see \cite{DiestelBook16noEE}*{Ch.\ 12.5} for a detailed introduction into tangles in graphs), then it is easy to see that the set $\cT_k^* \subseteq \cT_k$ of all stars in $\cT_k$ is a friendly set of stars. Further, Diestel and Oum have shown that the $\cT_k^*$-tangles in $G$ of order $k$ are precisely the $k$-tangles in $G$ if $\abs{G} \geq k$ \cite{TangleTreeGraphsMatroids}*{Lemma 4.2}.
\bigskip

We first recall the exact statement of Erde's refining lemma:

\begin{LEM}{\cite{JoshRefining}*{Lemma~3.1}}\label{lem:JELemma}
	Let $\vU$ be a submodular universe of separations, let $k \in \mathbb{N}$, and let $\cF$ be a friendly set of stars in $\vS_k$.
	Further, let $\sigma = \{\vs_1, \ldots, \vs_n\} \subseteq \vS_k$ be a non-empty star of separations which is inessential for the set of all $\cF$-tangles of $S_k$, and suppose that each $s_i$ distinguishes some pair of $\cF$-tangles of $S_k$ efficiently. 
	Then there is an $S_k$-tree over $\cF' := \cF \cup \{\{\sv_1\}, \ldots, \{\sv_n\}\}$ in which each $\vs_i$ appears as a leaf separation.
\end{LEM}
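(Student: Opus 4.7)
The plan is to combine the tangle-tree duality theorem (\cref{thm:TTD}) with the shifting lemma (\cref{lem:shifting}). First I would apply \cref{thm:TTD} to a suitably enlarged set of stars to produce a preliminary $S_k$-tree, and then reshape that tree by shifting so that each $\vs_i$ becomes a leaf separation and the tree is over $\cF'$.

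Concretely, I would set
\[
\hat{\cF} := \cF \cup \bigl\{\{\vx\} : \vx \in \vS_k \text{ with } \vx \geq \sv_i \text{ for some } i\bigr\}.
\]
This enlargement is standard (inherited from $\cF$) and closed under shifting: the $\cF$-part is closed by friendliness, and for the added singletons, if $\vt$ emulates some $\vr$ with $\sv_i \geq \vr$, then $\shiftingfct{\vt}{\vr}(\sv_i) = \sv_i \vee \vt \geq \sv_i$, so the shifted singleton remains in $\hat{\cF}$. Crucially, $\hat{\cF}$ admits no tangle of $S_k$: any such $P$ is an $\cF$-tangle containing no $\vx \geq \sv_i$, so $\sv_i \notin P$ and hence $\vs_i \in P$ for every $i$, forcing $\sigma \subseteq P$, which contradicts the inessentiality of $\sigma$. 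Thus \cref{thm:TTD} yields an $S_k$-tree $(T, \alpha)$ over $\hat{\cF}$, which by \cite{TangleTreeAbstract}*{Lemma 2.4} we may take to be tight and irredundant.

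Each leaf of $T$ then carries a singleton star that is either already in $\cF$ or of the form $\{\sv\}$ with $\sv \geq \sv_i$ for some $i$. For every leaf of this second type -- whose leaf separation $\vs$ thus satisfies $\vs \leq \vs_i$ -- I would invoke \cref{lem:shifting} with $\vr := \vs$ and shift target $\vs_i$, replacing $\vs$ by $\vs_i$ as the leaf separation. The essential check is that $\vs_i$ emulates $\vs$ in $\vS_k$ for $\cF$. For any $\vx \in \vS_k$ with $\vx \geq \vs$ and $\vx \neq \sv$, submodularity of $\vS_k$ gives $\vs_i \vee \vx \in \vS_k$ or $\vs_i \wedge \vx \in \vS_k$; in the latter case $\vs_i \wedge \vx$ would have order strictly below $|s_i|$, and a standard corner-separation argument then uses the profile property of the $\cF$-tangle pair efficiently distinguished by $\vs_i$ to derive a contradiction, ruling out that case. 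Emulation in $\vS_k$ then lifts to emulation for $\cF$ by the friendliness of $\cF$.

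Iterating these shifts over the leaves whose singleton stars are not yet of the form $\{\sv_i\}$, one eventually obtains an $S_k$-tree over $\cF'$ in which each $\vs_i$ appears as a leaf separation. The main obstacle is the interference between shifts: each application of \cref{lem:shifting} alters labels throughout the entire tree, so a later shift may change previously installed leaf separations. The proof has to organise the shifts carefully -- proceeding in an order determined by the order of the $\vs_i$, or bundling several into a single structural step -- and to verify, using the efficiency of each individual $\vs_i$ together with the fact that each $\cF$-tangle $P$ with $\sv_i \in P$ lives at some leaf of $T$, that every index $i$ is represented by at least one leaf whose separation is dominated by $\vs_i$, so that every $\vs_i$ can indeed be realized as a leaf separation of the final tree.
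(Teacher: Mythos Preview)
Your overall architecture---enlarge $\cF$ by the singletons $\{\xv\}$ with $\xv\ge\sv_i$, kill all $\hat\cF$-tangles via the inessentiality of $\sigma$, apply \cref{thm:TTD}, and then shift---is exactly the route taken in the paper (see the proof of \cref{lem:reflem}, which the paper says follows Erde's proof of \cref{lem:JELemma} closely). Two points, however, are not handled correctly in your write-up.

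First, your emulation argument is aimed at the wrong leaf. You pick an arbitrary leaf with label $\sv\ge\sv_i$ and try to show that $\vs_i$ emulates $\vs$ by a bare submodularity/efficiency argument on an arbitrary $\vx\ge\vs$. That does not go through: having $|\vs_i\wedge\vx|<|s_i|$ gives no contradiction unless $\vs_i\wedge\vx$ (or its inverse) distinguishes the tangle pair witnessing the efficiency of $s_i$, and for a generic $\vx$ there is no reason it should. The paper instead first fixes, for the missing index $i$, an $\cF$-tangle $P_i$ with $\sv_i\in P_i$ (which exists by efficiency, cf.\ \cref{prop:eff}); since $P_i$ is consistent it lives at some node of the $\hat\cF$-tree, and since $P_i$ avoids $\cF$ this node must be a leaf with leaf separation $\vx_i$ satisfying $\xv_i\in P_i$. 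A short regularity argument then forces $\sv_i\le\xv_i$ (not merely $\sv_j\le\xv_i$ for some $j$). Now emulation is immediate: for $\vy\ge\vx_i$ one has $\yv\le\xv_i\in P_i$, whence $\yv\in P_i$ by consistency, and the close-relatedness of $\sv_i$ to $P_i$ gives $\sv_i\wedge\yv\in\vS_k$, i.e.\ $\vs_i\vee\vy\in\vS_k$.

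Second, the ``interference'' problem you flag is not solved by ordering the shifts; the paper sidesteps it entirely with an extremal argument. One takes, among all $S_k$-trees over $\hat\cF$, one with a \emph{maximum} number of the $\vs_i$ as leaf separations. If some $\vs_i$ is still missing, the single shift above produces an $S_k$-tree over $\hat\cF$ with $\vs_i$ as a leaf; and because $\vs_i\le\sv_j$ for all $j\ne i$ (star property), this shift fixes every previously installed $\vs_j$. That contradicts maximality, so in fact all $\vs_i$ are already leaf separations. A short final paragraph then checks that any remaining leaf labels $\{\xv\}\in\hat\cF\setminus\cF'$ are forced to be co-trivial and hence lie in $\cF$, so the tree is over $\cF'$; your sketch omits this step as well.
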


Besides the fact that the separation system at hand is of the form $S_k$, it is essential in the proof of \cref{lem:JELemma} that every separation in the inessential star $\sigma$ which we want to refine distinguishes a pair of $\cF$-tangles of $S_k$ \emph{efficiently}.
Indeed, Erde \cite{JoshRefining} gave an example of a graph $G$, an integer $k > 0$, and a star~$\sigma$ which is inessential for the set of all $k$-tangles in $G$, and whose separations each distinguish some pair of $k$-tangles in $G$ -- but do not do so efficiently -- such that it is not possible to refine $\sigma$ in the sense of \cref{lem:JELemma}. Moreover, the star $\sigma$ in this example is invariant under all automorphisms of $G$. 

Thus, when generalizing \cref{lem:JELemma} to arbitrary submodular separations systems $\vS$, we cannot hope to refine every inessential star without imposing further conditions on the separations inside that star.
But as the notion of efficiency relies on the existence of an order function, we have to find a different condition on the separations in $\sigma$ which is defined even for abstract separation systems without an order function.

By taking a closer look at the proof of \cref{lem:JELemma} \cite{JoshRefining}, one can see that the efficiency condition on the separations in $\sigma$ is only used to show that for every separation $\vs_i \in \sigma$, there is an $\cF$-tangle $P_i$ of $S_k$ such that $\vs_i$ emulates every separation $\vx \leq \vs_i$ with $\xv \in P_i$. This is equivalent to having $\sv_i \wedge \rv \in \vS$ for every separation $\rv \in \vS$ that is smaller than some $\xv \in P_i$, which motivates the following definition.

Let $\vS$ be a separation system inside some universe, and let $O$ be a consistent orientation of $S$. We say that a separation~$\vs \in \vS$ is \emph{closely related to $O$} if $\vs \in O$ and
\[
\text{for every separation } \vr \in P, \text{ it holds that } \vs \wedge \vr \in \vS.
\]
We remark that the inverse $\sv$ of every separation $\vs \in O$ which is closely related to $O$ indeed emulates every separation $\xv$ with $\vs \leq \vx \in O$.

A separation $\vr \in O$ is \emph{maximal in $O$} if it is a maximal element in $O$ with respect to the partial order on~$O$ induced by $\vS$. An example of separations which are closely related to a profile $P$ are those which are maximal in~$P$, as the following proposition shows:

\begin{PROP}\label{prop:MaxSepsAreCloselyRelated}
    Let $\vS$ be a submodular separation system, and let $P$ a profile of $S$. Then every maximal separation $\vs \in P$ is closely related to $P$.
\end{PROP}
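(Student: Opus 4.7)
The plan is to fix an arbitrary $\vr \in P$ and show $\vs \wedge \vr \in \vS$; combined with $\vs \in P$, this is exactly the definition of $\vs$ being closely related to $P$. The only ingredients I would need are the submodularity of $\vS$, the profile axiom, and the maximality of $\vs$ in $P$.

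First, I would invoke submodularity of $\vS$: at least one of $\vs \vee \vr$ and $\vs \wedge \vr$ already lies in $\vS$, so it suffices to rule out the case $\vs \vee \vr \in \vS$ together with $\vs \wedge \vr \notin \vS$. Assume we are in that case. Since $\vs, \vr \in P$, the profile axiom gives $(\vs \vee \vr)^* \notin P$. As $\vs \vee \vr$ lies in $\vS$, the associated unoriented separation belongs to $S$ and $P$ must orient it; the only option compatible with the profile axiom is $\vs \vee \vr \in P$ itself. (This incidentally forces $\vs \vee \vr$ to be non-degenerate: a degenerate separation would have to belong to $P$ as a degenerate element of the orientation, yet simultaneously be excluded from $P$ by the profile axiom.)

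Now $\vs \leq \vs \vee \vr \in P$, so the maximality of $\vs$ in $P$ forces $\vs = \vs \vee \vr$, i.e.\ $\vr \leq \vs$. But then $\vs \wedge \vr = \vr \in \vS$, contradicting the case assumption. Thus the case $\vs \wedge \vr \notin \vS$ cannot occur, which completes the argument. I do not expect any real obstacle here: the proof is essentially a single deduction combining submodularity, the profile property, and maximality, and it does not require any of the heavier machinery (shifting, friendliness, tree-set theorems) developed earlier in the paper.
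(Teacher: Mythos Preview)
Your argument is correct and essentially identical to the paper's proof: assume $\vs \wedge \vr \notin \vS$, use submodularity to get $\vs \vee \vr \in \vS$, use the profile property to place $\vs \vee \vr$ in $P$, and contradict maximality of $\vs$. The paper phrases the contradiction as ``$\vs \vee \vr$ is strictly larger than $\vs$'' (strictness following from $\vs \wedge \vr \notin \vS$), whereas you spell out the equality case explicitly; your parenthetical about degeneracy is correct but not needed.
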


\begin{proof}
    Suppose that there is a separation $\vr \in P$ with $\vs \wedge \vr \notin \vS$. By the submodularity of $\vS$, we then have $\vs \vee \vr \in \vS$, and hence $\vs \vee \vr \in P$ as $P$ is a profile. But $\vs \vee \vr$ is strictly larger than $\vs$, a contradiction.
\end{proof}

Our next basic observation describes when a separation $\vr$ is closely related to some profile $P$ provided that there is a separation which is greater than $\vr$ and closely related to~$P$:

\begin{PROP}\label{prop:ShowingThatASepIsCloselyRelated}
Let $\vS$ be a submodular separation system. If $\vs \in \vS$ is closely related to a profile $P$ of $S$ and $\vr \leq \vs$ is a separation in $\vS$ such that $\vr \wedge \vu \in \vS$ for every $\vu \leq \vs$, then $\vr$ is closely related to $P$.
\end{PROP}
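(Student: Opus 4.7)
The statement asks us to verify the two defining conditions of close relatedness for $\vr$: that $\vr \in P$, and that $\vr \wedge \vq \in \vS$ for every $\vq \in P$. Both will follow from combining the close relatedness of $\vs$ with the hypothesis on $\vr$, so the proof will be short and essentially computational.

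For the first condition, I would start from $\vs \in P$, which holds because $\vs$ is closely related to $P$. Since $\vr \leq \vs$ and $P$ is a consistent orientation of $S$, the inverse $\rv$ cannot lie in $P$; as $P$ orients $S$, this forces $\vr \in P$. The degenerate case $r = s$ and the edge case where $\vr$ coincides with $\sv$ can be dismissed immediately, since then $\vs \in P$ already gives what we need or contradicts the non-degeneracy assumptions underlying the profile.

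For the second condition, I would fix an arbitrary $\vq \in P$ and set $\vu := \vs \wedge \vq$. By close relatedness of $\vs$, this infimum lies in $\vS$, and by construction $\vu \leq \vs$. The hypothesis on $\vr$ then applies to $\vu$ and gives $\vr \wedge \vu \in \vS$. Since $\vr \leq \vs$ implies $\vr \wedge \vs = \vr$, the associativity of the meet in the ambient universe yields
\[
\vr \wedge \vq \;=\; (\vr \wedge \vs) \wedge \vq \;=\; \vr \wedge (\vs \wedge \vq) \;=\; \vr \wedge \vu \;\in\; \vS,
\]
as required.

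I do not expect a genuine obstacle here: the whole argument is an identity calculation that only uses consistency of $P$, the close relatedness of $\vs$, and the hypothesis applied to the specific element $\vs \wedge \vq \leq \vs$. In particular, no invocation of the profile law or of submodularity (beyond what is implicit in close relatedness living inside an ambient universe $\vU$) is needed.
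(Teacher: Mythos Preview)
Your proof is correct and follows essentially the same route as the paper: both reduce $\vr \wedge \vq$ to $\vr \wedge (\vs \wedge \vq)$ via $\vr \wedge \vs = \vr$ and then apply the hypothesis to $\vs \wedge \vq \leq \vs$, which lies in $\vS$ by the close relatedness of $\vs$. The only difference is that the paper silently takes $\vr \in P$ for granted (it follows from consistency since $\vr \leq \vs \in P$), whereas you spell this out; your discussion of the edge cases is slightly more elaborate than necessary but harmless.
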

\begin{proof}
    Let $\vt \in P$ be an arbitrary separation. Since $\vs$ is closely related to $P$, we have $\vs \wedge \vt \in \vS$. As $\vr \wedge \vs = \vr$, it follows that $\vr \wedge \vt = (\vr \wedge \vs) \wedge \vt = \vr \wedge (\vs \wedge \vt) \in \vS$ because $\vs \wedge \vt \leq \vs$.
\end{proof}

In general, there are more separations that are closely related to some profile than just the maximal ones. Moreover, being closely related to some profile is indeed just a generalization of efficiently distinguishing a pair of profiles in that if $k$ is a positive integer and $\vU$ is a submodular universe, then the orientations of every separation in~$S_k \subseteq U$ which efficiently distinguishes a pair of profiles of $S_k$ are closely related to the profiles in that pair:

\begin{PROP}\label{prop:eff}
	Let $\vU$ be a submodular universe of separations, $k \in \N$, and let $P$ and $P'$ be two profiles of~$S_k$. If a separation $\vs \in P$ distinguishes $P$ and $P'$ efficiently, then $\vs$ and $\sv$ are closely related to $P$ and~$P'$, respectively.
\end{PROP}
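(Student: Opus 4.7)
The plan is to prove that $\vs$ is closely related to $P$; the claim that $\sv$ is closely related to $P'$ then follows by the same argument with the roles of $(\vs,P)$ and $(\sv,P')$ interchanged. Since $\vs \in P$ distinguishes $P$ from $P'$, we have $\sv \in P'$. It remains to show that $\vs \wedge \vr \in \vS_k$ for every $\vr \in P$.

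I would argue by contradiction. Suppose $\vs \wedge \vr \notin \vS_k$ for some $\vr \in P$, i.e.\ $\abs{\vs \wedge \vr} \geq k$. Since $\abs{\vs}, \abs{\vr} < k$, submodularity of the order function yields
\[
\abs{\vs \vee \vr} \leq \abs{\vs} + \abs{\vr} - \abs{\vs \wedge \vr} < \abs{\vs}.
\]
In particular $\vs \vee \vr \in \vS_k$, and its order is strictly smaller than that of $\vs$. It therefore suffices to show that $\vs \vee \vr$ distinguishes $P$ from $P'$, as this will contradict the efficiency of $\vs$.

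For the $P$-orientation: since $\vs, \vr \in P$ and $P$ is a profile, $(\vs \vee \vr)^* \notin P$, so $P$ orients $s \vee r$ as $\vs \vee \vr$. For the $P'$-orientation I would split into two cases. If $\vs = \vs \vee \vr$, then $P'$ already orients $s \vee r = s$ as $\sv = (\vs \vee \vr)^*$, opposite to $P$. Otherwise $\vs < \vs \vee \vr$ with $s$ and $s \vee r$ distinct unoriented separations, and consistency of $P'$ together with $\sv \in P'$ forbids $\vs \vee \vr \in P'$, so $P'$ again orients $s \vee r$ as $(\vs \vee \vr)^*$. Either way, $P$ and $P'$ disagree on $s \vee r$, which yields the desired contradiction.

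I do not expect a serious obstacle: the computation is a standard submodularity argument, and the only delicate point is the case distinction in the $P'$-orientation check, where one must be careful that consistency of $P'$ (rather than its profile property) is what is being used on that side.
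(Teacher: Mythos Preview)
Your proof is correct and follows essentially the same route as the paper's: assume $\vs \wedge \vr \notin \vS_k$, use submodularity to get $\abs{\vs \vee \vr} < \abs{\vs}$, and derive a contradiction to efficiency by showing that $\vs \vee \vr$ distinguishes $P$ and $P'$. The only difference is cosmetic: where the paper invokes consistency of $P'$ directly from $(\vs \vee \vr)^* \leq \sv \in P'$, you make the harmless edge case $\vs = \vs \vee \vr$ explicit.
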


\begin{proof}
	Let $\vs \in P$ be a separation that distinguishes $P$ and $P'$ efficiently, and let $\vr \in P$ be arbitrary. 
	We need to show that $\vr \wedge \vs \in \vS_k$. Suppose for a contradiction that $\abs{\vr \wedge \vs} \geq k > \abs{\vr}$. By submodularity of the order function, it follows that $\abs{\vr \vee \vs} < \abs{\vs} < k$, and hence $\vr \vee \vs \in \vS_k$. Since $P$ is a profile, we then have~$\vr \vee \vs \in P$. Moreover, by the consistency of $P'$, it holds that $(\vr \vee \vs)^* = \rv \wedge \sv \leq \sv \in P'$, and hence~$\vr \vee \vs$ distinguishes $P$ and $P'$. Since $\abs{\vr \vee \vs} < \abs{s}$, this contradicts that $s$ distinguishes $P$ and $P'$ efficiently.
	Analogously, one can show that $\sv$ is closely related to $P'$.
\end{proof}

We are now ready to state our generalized version of Erde's refining lemma (\cref{lem:JELemma}):

\begin{LEM}\label{lem:reflem}
	Let $\vS$ be a submodular separation system, and let $\cF$ be a standard set of stars in $\vS$ which is closed under shifting and contains $\{\rv\}$ for every small $\vr \in \vS$.
	Further, let~$\sigma = \{\vs_1, ..., \vs_n\} \subseteq \vS$ be a non-empty star which is inessential for the set of all $\cF$-tangles of $S$, and suppose that each $\sv_i$ is closely related to some $\cF$-tangle of $S$. 
	Then there is an $S$-tree over $\cF \cup \{\{\sv_1\}, ...,\{\sv_n\}\}$ in which each $\vs_i$ appears as a leaf separation. 
\end{LEM}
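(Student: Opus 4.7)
The plan is to imitate Erde's proof of \cref{lem:JELemma}, with the structural notion of close relation taking over the role of efficient distinguishing. The strategy is to apply the tangle-tree duality theorem (\cref{thm:TTD}) to a suitably enlarged set of stars so as to obtain an $S$-tree, and then to invoke the shifting lemma (\cref{lem:shifting}) to arrange that each $\vs_i$ appears as a leaf separation.

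Concretely, I would consider the enlarged set $\cF' := \cF \cup \{\{\sv_1\}, \ldots, \{\sv_n\}\}$. My first observation is that no $\cF'$-tangle of $S$ exists: any such tangle would be an $\cF$-tangle that contains no $\sv_i$, hence orients each $s_i$ as $\vs_i$, and therefore contains the star $\sigma$ as a subset---contradicting the assumption that $\sigma$ is inessential for the set of $\cF$-tangles of $S$. The set $\cF'$ is also standard for $\vS$, since $\cF$ is standard and $\cF \subseteq \cF'$.

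The main technical work, and where I expect the real difficulty to lie, is to verify that $\cF'$ (possibly after adjoining further auxiliary stars that do not introduce new $\cF'$-tangles) is closed under shifting, so that \cref{thm:TTD} can be applied. Since $\cF$ is closed under shifting by friendliness, only the new singleton stars $\{\sv_i\}$ require scrutiny: given $\vt \in \vS$ emulating some $\vr \in \vS$ with $\sv_i \geq \vr$, I need to show that $\shiftingfct{\vt}{\vr}(\{\sv_i\}) = \{\sv_i \vee \vt\}$ lies in (a suitable closure of) $\cF'$. Here the close-relation hypothesis enters decisively: the identities $\sv_i \wedge \vu \in \vS$ for all $\vu \in P_i$, arising from $\sv_i$ being closely related to $P_i$, constrain how shifts of $\{\sv_i\}$ can look and allow one to argue that the relevant shifts either already lie in $\cF'$ or can be adjoined without creating new $\cF'$-tangles. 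This mirrors Erde's argument, in which the submodularity of the order function together with efficient distinguishing plays exactly this role.

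Once \cref{thm:TTD} produces an $S$-tree $(T, \alpha)$ over the (enlarged) $\cF'$, I would pass to a tight irredundant subtree as permitted by \cite{TangleTreeAbstract}*{Lemma 2.4} and then apply \cref{lem:shifting} iteratively in order to ensure that each $\vs_i$ appears as a leaf separation: at each step I pick a suitable existing leaf separation $\vr$ of the current tree and shift by $\vs_i$. The remark immediately following the definition of close relation---namely that $\vs_i$ emulates every $\xv$ with $\sv_i \leq \vx \in P_i$---guarantees that the emulation hypothesis of \cref{lem:shifting} is satisfied for the appropriate choices of $\vr$. The hard part, as noted, is the closure-under-shifting verification, for which the notion of close relation is precisely the structural surrogate for efficient distinguishing in this abstract setting.
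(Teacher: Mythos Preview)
Your outline follows the same route as the paper, but you have misidentified where the real work happens and where the close-relation hypothesis is actually used.

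The enlargement one should take is the upward closure
\[
\bar{\cF} := \cF \cup \bigl\{\{\xv\} : \sv_i \leq \xv \text{ for some } i \in [n]\bigr\},
\]
not just $\cF'$. With this choice, closure under shifting is \emph{trivial} and uses no hypothesis at all beyond the friendliness of $\cF$: any shift map $\shiftingfct{\vs}{\vr}$ sends such an $\xv$ to some $\yv \geq \xv \geq \sv_i$, so $\{\yv\} \in \bar{\cF}$ again. The close-relation hypothesis plays no role here, contrary to what you write. (Also, $\bar{\cF}$ introduces no new tangles, since any $\bar{\cF}$-tangle would in particular avoid each $\{\sv_i\}$ and hence contain $\sigma$.)

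Close relation enters at exactly one place: in the shifting step. Once \cref{thm:TTD} yields an $S$-tree over $\bar{\cF}$, the profile $P_i$ must live at a leaf whose associated singleton $\{\xv_i\}$ lies in $\bar{\cF} \setminus \cF$, so $\xv_i \geq \sv_j$ for some $j$. One then needs $j = i$; this uses that $\sigma$ is a star and that $P_i$, being a regular profile (since $\cF$ is friendly), contains a unique $\sv_j$, namely $\sv_i$. Only now does close relation appear: since $\sv_i$ is closely related to $P_i$ and $\xv_i \in P_i$ with $\sv_i \leq \xv_i$, the separation $\vs_i$ emulates $\vx_i$, which is precisely the hypothesis of \cref{lem:shifting}. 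This is the genuine ``hard part'', and you have it backwards.

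Finally, a small cleanup you omit: the resulting tree is over $\bar{\cF}$, not over $\cF'$; one must observe that any leaf singleton $\{\xv\} \in \bar{\cF} \setminus \cF'$ with $\xv > \sv_i$ forces the corresponding leaf separation $\vx$ to be trivial (since $\vx < \vs_i \leq \sv_j$ for all $j \neq i$), whence $\{\xv\} \in \cF$ after all by standardness.
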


The proof of this lemma will be quite similar to Erde's proof of \cref{lem:JELemma} \cite{JoshRefining}. Indeed, we will only need to make some small changes to adjust the proof to our slightly different setting. For the sake of completeness, we give a full proof here instead of just explaining the differences. Any interested reader may also consult \cite{JoshRefining} for the original proof.

\begin{proof}[Proof of \cref{lem:reflem}]
	Set $\bar{\cF} := \cF \cup \{\{\xv\} : \sv_i \leq \xv \text{ for some } i \in [n]\}$. We first show that $\bar{\cF}$ is closed under shifting. For this, let $\{\xv\} \in \bar{\cF} \setminus \cF$ be arbitrary. Then any map $\shiftingfct{\vs}{\vr}$ with $\{\xv\} \subseteq \vS_{\geq \vr}$ sends $\xv$ to some~$\yv \geq \xv$ and so $\{\yv\} \in \bar{\cF}$. Therefore, $\bar{\cF}$ is closed under shifting since $\cF$ was. 
	Further, $\bar{\cF}$ is standard since $\cF$ was. Hence, we can apply \cref{thm:TTD} to $\bar{\cF}$.
	Since $\sigma$ was an inessential star for $\cF$ and $\cF \subseteq \bar{\cF}$, there cannot be an $\bar{\cF}$-tangle of $S$, and hence, by \cref{thm:TTD}, there is an~$S$-tree over $\bar{\cF}$. Let $(T, \alpha)$ be such a tree which contains as many of the separations $\vs_i$ among its leaf separations as possible. Note that, in general, an~$S$-tree over $\bar{\cF}$ need not contain any $\vs_i$ as a leaf separation.

    \begin{figure}[h!]
		\centering
        \definecolor{lgreen}{rgb}{0,0.8,0}
\definecolor{lgrey}{rgb}{0.5,0.5,0.5}
\scalebox{0.8}{%
\begin{tikzpicture}
\draw [line width=1.3pt,color=lgrey] (4.63,8.78)-- (6.61,7.13)-- (7.63,4.97);;
\draw [line width=1.3pt,color=lgrey] (8.18,6.81)-- (7.62,7.87);
\draw [line width=1.3pt,color=lgrey] (6.63,4.23)-- (7.12,3.38)-- (6.04,3.13)-- (3.88,1.59)-- (4.94,3.54);
\draw [line width=1.3pt,color=lgrey] (3.88,1.59)-- (2.25,0.8);
\draw [line width=1.3pt,color=lgrey] (6.63,4.23)-- (7.63,4.97)-- (8.18,6.81)-- (9.8,8.07);
\draw [line width=1.3pt,color=lgrey] (6.63,4.23)-- (4.77,4.78)-- (4.5,5.81);
\draw [line width=1.3pt,color=lgrey] (4.77,4.78)-- (2.86,4.68);
\draw [line width=1.3pt,color=lgrey] (7.12,3.38)-- (9.54,1.81)-- (11.4,1.95);
\draw [line width=1.3pt,color=lgrey] (7.91,2.87)-- (10.45,3.49);

\draw [rotate around={0:(7,6)},line width=1.5pt,color=blue] (7,6) ellipse (2.01cm and 0.22cm);
\draw [rotate around={-37.18:(4.96,2.27)},line width=1.5pt,color=blue] (4.96,2.27) ellipse (1.22cm and 0.21cm);
\draw [rotate around={48.67:(9.04,2.9)},line width=1.5pt,color=blue] (9.04,2.9) ellipse (1.48cm and 0.24cm);
\draw [rotate around={64.8:(3.8,4.72)},line width=1.5pt,color=blue] (3.8,4.72) ellipse (0.97cm and 0.21cm);

\draw [rotate around={33.34:(5.6,8.02)},line width=1.5pt] (5.6,8.02) ellipse (1.14cm and 0.23cm);
\draw [rotate around={-50.47:(9.05,7.61)},line width=1.5pt] (9.05,7.61) ellipse (1.11cm and 0.19cm);
\draw [rotate around={67.63:(10.64,2.11)},line width=1.5pt] (10.64,2.11) ellipse (0.98cm and 0.2cm);
\draw [rotate around={-47.22:(3.05,1.21)},line width=1.5pt] (3.05,1.21) ellipse (0.94cm and 0.19cm);

\draw [color=blue](9.6,5.6) node[anchor=north west] {\LARGE $\sigma$};
\draw [color=lgreen](3.55,8.9) node[anchor=north west] {\LARGE $P_i$};
\draw [color=blue](5.8,5.65) node[anchor=north west] {\Large $\vs_i$};
\draw (4.95,7.3) node[anchor=north west] {\Large $\vx_i$};
\draw [color=lgrey](10.3,8) node[anchor=north west] {\LARGE $(T, \alpha)$};

\draw [->,line width=1.3pt,color=blue] (6.5,5.79) -- (6.5,4.95);
\draw [->,line width=1.3pt,color=blue] (3.83,4.33) -- (4.51,4.01);
\draw [->,line width=1.3pt,color=blue] (5.6,2.01) -- (6.07,2.61);
\draw [->,line width=1.3pt,color=blue] (9.22,3.45) -- (8.6,3.9);

\draw [->,line width=1.3pt] (3,1.56) -- (3.5,2);
\draw [->,line width=1.3pt] (5.39,7.63) -- (5.9,7);
\draw [->,line width=1.3pt] (9.24,7.11) -- (8.75,6.7);
\draw [->,line width=1.3pt] (10.54,2.39) -- (9.9,2.7);

\begin{scriptsize}
\draw [fill=lgrey] (4.63,8.78) circle (2pt);
\draw [fill=lgrey] (6.61,7.13) circle (2pt);
\draw [fill=lgrey] (6.63,4.23) circle (2pt);
\draw [fill=lgrey] (8.18,6.81) circle (2pt);
\draw [fill=lgrey] (9.8,8.07) circle (2pt);
\draw [fill=lgrey] (7.62,7.87) circle (2pt);
\draw [fill=lgrey] (7.12,3.38) circle (2pt);
\draw [fill=lgrey] (6.04,3.13) circle (2pt);
\draw [fill=lgrey] (3.88,1.59) circle (2pt);
\draw [fill=lgrey] (4.94,3.54) circle (2pt);
\draw [fill=lgrey] (2.25,0.8) circle (2pt);
\draw [fill=lgrey] (7.63,4.97) circle (2pt);
\draw [fill=lgrey] (4.77,4.78) circle (2pt);
\draw [fill=lgrey] (4.5,5.81) circle (2pt);
\draw [fill=lgrey] (2.86,4.68) circle (2pt);
\draw [fill=lgrey] (9.54,1.81) circle (2pt);
\draw [fill=lgrey] (11.4,1.95) circle (2pt);
\draw [fill=lgrey] (7.91,2.87) circle (2pt);
\draw [fill=lgrey] (10.45,3.49) circle (2pt);
\draw [fill=lgreen] (4.19,8.18) circle (3pt);
\end{scriptsize}
\end{tikzpicture}
}%
		\caption{The $\vS_k(G)$-tree over $\bar{\cF}$; the labeled leafs correspond to separations that are forced by $\bar{\cF}$ but not by $\cF$.}
		\label{fig:Reflem}
	\end{figure}    
	
	Suppose there is an $\vs_i \in \sigma$ which is not a leaf separation of $(T, \alpha)$, and let $P_i$ be some $\cF$-tangle of~$S$ to which~$\sv_i$ is closely related. Since $P_i$ is a consistent orientation of $S$, it has to live at some node $t \in V(T)$. But since $P_i$ is an $\cF$-tangle, the star which is associated with $t$ can only be in $\bar{\cF} \setminus \cF$, and hence $t$ has to be a leaf. Therefore, there is some leaf separation $\vx_i$ of $(T,\alpha)$ such that $\xv_i \in P_i$. Since $\{\xv_i\} \in \bar{\cF} \setminus \cF$, it follows that~$\xv_i \geq \sv_j$ for some $j \in [n]$. 
	
	But as $\cF$ contains $\{\rv\}$ for every small $\vr \in \vS$, we have that $P_i$ is regular, and so $\sv_i$ is the unique separation in $\sigma$ with $\sv_i \in P_i$.
	Indeed, if $\sv_j \in P_i$ for some $j \in [n]\setminus\{i\}$, then since~$P_i$ is consistent, we cannot have $\vs_i \leq \sv_j$. So either $\sv_i \leq \sv_j$ or $\sv_j \leq \sv_i$ or $\sv_j \leq \vs_i$. As~$\sigma$ is a star, the first two cases imply that either $\vs_j$ or $\vs_i$ is small, which, in both cases, contradicts that~$P_i$ is regular. Moreover, again because $\sigma$ is a star, the latter case implies that $\vs_i = \sv_j$, which contradicts that $P_i$ is an orientation of~$S$ as then~$\sv_i, \vs_i \in P_i$.
	Hence, $j = i$ and $\sv_i \leq \xv_i$.
	
	Since $\sv_i$ is closely related to $P_i$, we have that $\vs_i$ emulates $\vx_i$. Further, since $\vx_i$ is neither trivial nor degenerate, we can assume, by the comment after \cref{lem:shifting}, that $T$ is tight and irredundant and that~$\vx_i$ is not the image of any other edge in $T$.
	Therefore, by \cref{lem:shifting}, the shift of $(T,\alpha)$ onto $\vs_i$ yields an $S$-tree~$(T,\alpha')$ over $\bar{\cF}$, which  contains $\vs_i$ as a leaf separation and not as the image of any other edge.
	Moreover, since $\vs_i < \sv_j$ for all $j \neq i$, the shift $(T, \alpha')$ also contains all separations $\vs_j$ as leaf separations that $T$ did and hence contains one more than $T$, contradicting the choice of $T$. Therefore, $(T, \alpha)$ contains all $\vs_i$ as leaf separations.
	
	We are left to show that $(T, \alpha)$ is an $S$-tree over $\cF' := \cF \cup \{\{\sv_1\}, \ldots, \{\sv_n\}\}$. For this, recall that by definition, $(T, \alpha)$ is already an $S$-tree over $\bar{\cF}$. So suppose there is a node $t \in V(T)$ whose associated star under $\alpha'$ is not in~$\cF'$. Since $(T, \alpha)$ is over $\bar{\cF}$ and $\cF' \setminus \bar{\cF}$ contains only singleton stars, it follows that~$t$ is a leaf of~$T$. Let $\vr$ be the leaf separation associated with the unique edge which is incident to $t$. Since $\{\rv\} \in \bar{\cF} \setminus \cF'$, there is some $i \in [n]$ such that $\rv > \sv_i$. 
	But it also holds that $\sv_i > \vr$ (since $\vs_i$ and~$\vr$ are both leaf separations). Hence, $\vr$ is trivial and so $\{\rv\} \in \cF \subseteq \cF'$, which completes the proof.
\end{proof}

Even though \cref{lem:JELemma} and \cref{lem:reflem} are interesting results on their own, the importance of \cref{lem:JELemma} comes from the fact that it can be applied to tree sets $\tilde{N}$, inside some $S_k$, which efficiently distinguish the set of $\cF$-tangles of $S_k$ for some friendly set $\cF$ of stars. It then asserts the existence of a nested set $N \subseteq S_k$ with $\tilde{N} \subseteq N$ in which each inessential node is too `small' to be home to a profile, i.e.\ every inessential node is a star in~$\cF$.

For separation systems of the form $\vS_k$ it is already known that there are tree sets which satisfy the conditions of \cref{lem:JELemma} and \cref{lem:reflem}. 
For instance, Carmesin, Diestel, Hamann and Hundertmark~\cite{CDHH13CanonicalAlg} constructed a canonical tree set for graphs which efficiently distinguishes a given set of $k$-tangles. 
More generally, Diestel, Hundertmark and Lemanczyk \cite{ProfilesNew} constructed a canonical tree set distinguishing all `robust' $k$-profiles efficiently, for some $k \in \N$, inside a submodular universe of separations. Applying \cref{lem:JELemma} to these tree sets then yields \ref{thm:ErdesRefThm}.

But if we want to apply \cref{lem:reflem} to some canonical tree set in an arbitrary submodular separation system to obtain a similar result as in \ref{thm:ErdesRefThm}, we first have to find such a set that satisfies the assumptions of \cref{lem:reflem}. 
For this, we will show in \cref{subsec:RefiningTheCanonicalToTFromEK} that the canonical tree set which Elbracht and Kneip \cite{CanonicalToTSubmodular} constructed satisfies those assumptions. This will then allow us to prove \cref{cor:RefiningACanonicalToT}, by refining this set.

However, \cref{lem:JELemma} is actually strong enough to show that we can in fact refine \emph{every} tree set inside some $S_k$, canonical or not, in which every separation distinguishes some pair of $\cF$-tangles efficiently. 
By \cref{lem:reflem} we can now also refine every tangle-distinguishing tree set $N$ inside an abstract submodular separation system, as long as the inverse of every separation which is contained in an inessential node of~$N$ is closely related to some $\cF$-tangle:

\begin{THM}\label{thm:RefiningToTsWeakAssumptionn}
    Let $\vS$ be a submodular separation system, and let $\cF$ be a standard set of stars in $\vS$ which is closed under shifting and contains $\{\rv\}$ for every small $\vr \in \vS$.
    Further, let $\tilde{N}$ be a nested set of separations that distinguishes all $\cF$-tangles of $S$. If the inverse of every separation that is contained in an inessential node of $\tilde{N}$ is closely related to some $\cF$-tangle of $S$, then there exists a nested set $N \subseteq S$ with $\tilde{N} \subseteq N$ such that every node of~$N$ is either a star in $\cF$ or home to an \mbox{$\cF$-tangle of~$S$.}
\end{THM}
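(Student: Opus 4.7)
The plan is to apply the refining lemma (\cref{lem:reflem}) separately to each inessential node of $\tilde{N}$ and then combine the resulting tree sets with $\tilde{N}$. For each inessential node $\sigma$ of $\tilde{N}$, no $\cF$-tangle of $S$ contains $\sigma$ as a subset, so $\sigma$ is inessential for the set of all $\cF$-tangles; and by hypothesis, $\sv$ is closely related to some $\cF$-tangle for every $\vs \in \sigma$. Hence \cref{lem:reflem} will produce an $S$-tree $(T_\sigma, \alpha_\sigma)$ over $\cF \cup \{\{\sv\} : \vs \in \sigma\}$ in which each $\vs \in \sigma$ appears as a leaf separation. I set $N_\sigma := \im(\alpha_\sigma)$, so that $\sigma \subseteq N_\sigma$, and define
\[
    N := \tilde{N} \cup \bigcup_\sigma N_\sigma,
\]
with $\sigma$ ranging over the inessential nodes of $\tilde{N}$.

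The main task will be to show that $N$ is nested; the description of its nodes then follows. Nesting within $\tilde{N}$ or within a single $N_\sigma$ is immediate, and the cross-cases I plan to handle via two structural observations. The first is an \emph{interior property}: for each inessential $\sigma$, each $r \in N_\sigma$ not (as an unoriented separation) in $\sigma$, and each $\vs \in \sigma$, some orientation of $r$ is $\leq \sv$ -- in $T_\sigma$, $r$ corresponds to a non-leaf edge and $\vs$ to a leaf edge, and the orientation of $r$ whose edge lies on the side of $T_\sigma$ away from that leaf has small side contained in the small side of $\sv$. The second is a \emph{node-separation property} for $\tilde{N}$: for any two distinct nodes $\tau \neq \tau'$ of $\tilde{N}$ there exist $\vs \in \tau$ and $\vt \in \tau'$ with $\sv \leq \vt$ -- the consistent orientations $O_\tau, O_{\tau'}$ (each being the down-closure of its maximal set) must differ, so some $\vs \in \tau$ lies outside $O_{\tau'}$, whence $\sv \in O_{\tau'}$ is bounded above by some $\vt \in \tau'$. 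With these in hand, the case $r \in N_\sigma \setminus \sigma$ vs.\ $r' \in N_{\sigma'} \setminus \sigma'$ with $\sigma \neq \sigma'$ reduces to picking $\vs \in \sigma, \vt \in \sigma'$ with $\sv \leq \vt$ together with orientations of $r, r'$ satisfying $\vr \leq \sv$ and (chosen orientation of $r'$) $\leq \tv$, giving the chain $\vr \leq \sv \leq \vt \leq (\text{inverse of chosen orientation of } r')$, which witnesses nesting. The remaining cross-case, $r \in \tilde{N} \setminus \sigma$ versus $r' \in N_\sigma \setminus \sigma$, is analogous after noting that some orientation of $r$ is $\leq \vs$ for some $\vs \in \sigma$ (since $r$ lies below $\sigma$ in the down-closure $O_\sigma$).

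For the nodes of $N$: the combined set corresponds to an $S$-tree obtained from the $S$-tree for $\tilde{N}$ by replacing each inessential vertex $v_\sigma$ with the interior of $T_\sigma$, identifying, for each $\vs \in \sigma$, the leaf edge of $T_\sigma$ labelled $\vs$ with the edge of the $S$-tree for $\tilde{N}$ labelled $\vs$. Under this stitching, essential vertices of $\tilde{N}$ keep their stars and thus remain home to the same $\cF$-tangle, while every newly added interior vertex of $T_\sigma$ carries a star in $\cF$ -- the only stars of $T_\sigma$ outside $\cF$ are the singleton leaf stars $\{\sv\}$, which occur only at leaves. Consequently, every node of $N$ is either a star in $\cF$ or home to an $\cF$-tangle. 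The main obstacle will be the combinatorial bookkeeping required for the interior property and the verification that the stitched tree correctly identifies the nodes of $N$; once these are in place, the remainder of the argument is routine.
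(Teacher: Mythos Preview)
Your proposal is correct and follows essentially the same approach as the paper: apply \cref{lem:reflem} to each inessential node $\sigma$ of $\tilde{N}$ to obtain $N_\sigma$, and set $N := \tilde{N} \cup \bigcup_\sigma N_\sigma$. The paper's proof is terser---it asserts that the $N_\sigma$ are nested with $\tilde{N}$ and with each other as ``clear'' and does not spell out the description of the nodes of $N$---whereas you supply these details via your interior property, node-separation property, and $S$-tree stitching; these arguments are sound (in particular, $\alpha_\sigma$ is injective on an irredundant $S$-tree over stars, so each $\vs_i$ labels a unique leaf edge, making the stitching and the interior property work as you describe).
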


\begin{proof}
    Let $\Sigma$ be the set of all inessential nodes of $\tilde{N}$, and let $\sigma \in \Sigma$ be arbitrary. Since the inverse of every separation in $\sigma$ is closely related to some $\cF$-tangle of $S$, we can apply \cref{lem:reflem} to $\sigma$ to obtain a nested set $N_\sigma$ that corresponds to an $S$-tree over~$\cF \cup \{\{\sv\} : \vs \in \sigma\}$ in which all separations in $\sigma$ appear as leaf separations. Clearly, $N_\sigma$ is nested with~$N_{\sigma'}$ for every star $\sigma' \in \Sigma$.
    Since $N_\sigma$ is also nested with $N$, setting $N := \tilde{N} \cup \bigcup_{\sigma \in \Sigma} N_\sigma$ yields the desired nested set.
\end{proof}

\noindent Note that the nested set from \cite{CanonicalToTSubmodular} satisfies the assumptions of \cref{thm:RefiningToTsWeakAssumptionn}, as we will show in \cref{subsec:RefiningTheCanonicalToTFromEK}.

While the assumption of \cref{thm:RefiningToTsWeakAssumptionn} on the set $\tilde{N}$ is the weakest one we need in order to be able to prove such a refinement theorem for abstract separations systems by applying \cref{lem:reflem}, we will propose in the following a stronger assumption which we believe is more natural. For this, recall that \ref{thm:ErdesRefThm} imposes the condition on $\tilde{N}$ that every separation inside $\tilde{N}$ should distinguish some pair of $\cF$-tangles efficiently. Here, it does not depend on the set $\tilde{N}$ whether a separation efficiently distinguishes some pair of $\cF$-tangles. In particular, when seeking to construct a tangle-distinguishing tree set which satisfies the assumptions of \ref{thm:ErdesRefThm}, it can be determined prior to the construction which separations could potentially be included in $\tilde{N}$.

However, \cref{thm:RefiningToTsWeakAssumptionn} does not have this advantage. As its assumptions require that the inverse of every separation which is contained in an inessential node of $\tilde{N}$ should be closely related to some $\cF$-tangle, one needs to know about the whole set $\tilde{N}$ and first determine its inessential nodes, before one can check the condition of \cref{thm:RefiningToTsWeakAssumptionn}.
To obtain a result which is more in line with \ref{thm:ErdesRefThm}, we are therefore interested in finding a condition on the separations in $\tilde{N}$ that has the following two properties. On the one hand it should ensure that one can refine the inessential nodes of $\tilde{N}$, and on the other hand it should be formulated purely in terms of $S$ and $\cF$ so that this property can be checked without reference to the tree set $\tilde{N}$ and its inessential nodes. 

Let $\vS$ be a submodular separation system and $\cP$ a set of profiles of $S$. We say that a separation $ s \in S$ is \emph{good (for $\cP$)} if there are profiles $P, P' \in \cP$ such that $\vs$ and $\sv$ are closely related to $P$ and $P'$, respectively. 
Note that, by \cref{prop:eff}, every separation in an $S_k$ that distinguishes some pair of profiles efficiently is good. Thus, being good is a structural generalization of efficiency to abstract separation systems which come without the framework needed to define efficiency itself.
Since requiring every separation to be good is clearly a stronger assumption than the one from \cref{thm:RefiningToTsWeakAssumptionn}, the following theorem follows directly:

\begin{customthm}{\ref{thm:RefiningGoodToTs}}
    \emph{Let $\vS$ be a submodular separation system, and let $\cF$ be a standard set of stars in $\vS$ which is closed under shifting and contains $\{\rv\}$ for every small $\vr \in \vS$.
    Further, let $\tilde{N}$ be a nested set of separations that distinguishes all $\cF$-tangles of $S$. If every separation in $\tilde{N}$ is good for the set of all $\cF$-tangles of $S$, then there exists a nested set $N \subseteq S$ with $\tilde{N} \subseteq N$ such that every node of~$N$ is either a star in $\cF$ or home to an $\cF$-tangle of $S$.} \qed
\end{customthm}

Note that this theorem is a generalization of Erde's work in \cite{JoshRefining}. Indeed, Erde's refining lemma (\cref{lem:JELemma}) implies that one can refine every tree set inside some separation system of the form $\vS_k$ which contains only essential separations. Since by \cref{prop:eff} every separation which efficiently distinguishes some pair of profiles in $\cP$ is also good for $\cP$, \cref{thm:RefiningGoodToTs} also implies that these tree sets can be refined. 
However, \cref{thm:RefiningGoodToTs} can also be applied to arbitrary submodular separation systems which are not of the form $\vS_k$.

In \cref{sec:ANearlyCanonicalToTWithGoodSeps} we will construct a nested set which distinguishes a given set of profiles and contains only separations that are good for those profiles. Moreover, this set will be canonical, but in a slightly weaker sense than usual.

\section{Refining a canonical tree of tangles}\label{subsec:RefiningTheCanonicalToTFromEK}

In this section we prove that the inessential nodes of the canonical nested set $N(\vS, \cP)$ from \cref{thm:Tc} satisfy the conditions of \cref{lem:reflem}. This will allow us to refine its inessential nodes which then implies \cref{cor:RefiningACanonicalToT}. For this, throughout this section, let $\vS$ be a submodular separation system, and let $\cP$ be a set of profiles of $S$. We first recall the construction of $N(\vS, \cP)$ from \cite{CanonicalToTSubmodular}:

\begin{CONSTR}\label{constr:Tc}
A separation $\vs \in \vS$ is \emph{exclusive (for $\cP$)} if it is contained in exactly one profile in $\cP$. If $P \in \cP$ is that profile, we say that $\vs$ is \emph{$P$-exclusive (for $\cP$)}.
Further, we denote, for every profile $P \in \cP$, the set of all maximal $P$-exclusive separations in $\vS$ with $M_P$. Note that, in general, there will be profiles for which $M_P$ is empty.
If $M_P$ is non-empty for some $P \in \cP$, then the infimum of $M_P$ exists and is again $P$-exclusive for $\cP$ \cite{CanonicalToTSubmodular}*{Lemma 4.4}. Moreover, for every pair of profiles $P, P' \in \cP$, the infima of $M_P$ and~$M_{P'}$ are nested (\cite{CanonicalToTSubmodular}*{Lemma 4.3} and \cref{lem:Fischlemma}).

The strategy for constructing $N := N(\vS, \cP)$ is to pick, for every profile $P \in \cP_1 := \{P \in \cP : M_P \neq \emptyset\}$,
the infimum of $M_P$ and add those separations to $N$, which will then be a nested set. 
As the infima of all non-empty $M_P$ are still exclusive for $\cP$ and hence distinguish every profile in $\cP_1$ from all other profiles in~$\cP$, we can discard the profiles in $\cP_1$ from $\cP$. We then remove from $S$ all separations that are not nested with those infima. Iterating this procedure yields the desired tree set.

To make the construction more precise, and to establish some notation we need later, set $M_{P,1} := M_P$ for every $P \in \cP_1$, and note that $\cP_1$ is non-empty \cite{CanonicalToTSubmodular}*{Lemma 4.2}.
Set $\vs_P := \text{inf}(M_{P,1}) := \bigwedge_{\vr \in M_{P,1}} \vr$ for all~$P \in \cP_1$, set $N_1 := \{s_P : P \in \cP_1\}$, and let $\vS_2$ be the set of all separations in $\vS$ which are nested with~$N_1$. For consistency set also $\vS_1 := \vS$.
Then $\vS_2$ is still submodular and distinguishes all remaining profiles in~$\cP \setminus \cP_1$ \cite{CanonicalToTSubmodular}*{Lemma 4.5}. Hence, we can proceed in the same manner as follows.

At the beginning of the $i$-th step of the construction we have already constructed a set $N_{i-1}$ that distinguishes every profile in $\cP_{<i} := \bigcup_{j < i} \cP_j$ from all other profiles in $\cP$. We then consider the set $\cP_{\geq i} := \cP \setminus \cP_{<i}$ and the separation system $\vS_i$ that consists of all separations in $\vS$ which are nested with~$N_{i-1}$.
For every $P \in \cP_{\geq i}$ we let $M_{P,i}$ be the set of all separations in $\vS_i$ which are $P$-exclusive for $\cP_{\geq i}$ and maximal in $P \cap \vS_i$. 
We then set $\cP_i := \{P \in \cP_{\geq i} : M_{P,i} \neq \emptyset\}$ and $\vs_P := \text{inf}(M_{P,i})$ for every $P \in \cP_i$. For this note again that $\vs_P \in \vS$ for every $P \in \cP_i$ \cite{CanonicalToTSubmodular}*{Lemma 4.4}. 
Lastly, we set $N_i := \{s_P : P \in \cP_{\leq i}\}$, and we let~$\vS_{i+1}$ be the set of all separations in $\vS$ which are nested with~$N_i$. 
Again we have that $N_i$ is nested, and it follows directly from \cref{lem:Fischlemma} that $\vS_i$ is submodular.

We then obtain the desired nested set by setting $N(\vS, \cP) := N_K$ where $K$ is the smallest number for which~$\cP_{>K}$ contains at most one profile. By construction, $N(\vS, \cP)$ distinguishes all profiles in $\cP$, and it is shown in \cite{CanonicalToTSubmodular} that~$N(\vS, \cP)$ is indeed canonical. This completes the construction.
\end{CONSTR}

\begin{figure}
    \centering
    \definecolor{dgrey}{rgb}{0.25,0.25,0.25}
\definecolor{lgreen}{rgb}{0,0.8,0}
\definecolor{lblue}{rgb}{0.3,0.3,1}
\definecolor{dgreen}{rgb}{0,0.4,0}
\definecolor{dblue}{rgb}{0,0,0.8}
\scalebox{0.5}{%
\begin{tikzpicture}
\draw [line width=1.2pt,color=dblue] (6.96,8.46)-- (10.12,8.46);
\draw [line width=1.2pt,color=dblue] (4.5,7.9)-- (2.8,5.1);
\draw [line width=1.2pt,color=dblue] (11.6,7.6)-- (13.4,5.5);
\draw [line width=1.2pt,color=dblue] (4,3)-- (6.9,1.4);
\draw [line width=1.2pt,color=dblue] (8.1,0.9)-- (11,1.7);
\draw [line width=1.2pt,color=dblue] (12.3,2.5)-- (13.5,4.4);
\draw [line width=1.2pt,color=dgreen] (6.08,7.14)-- (4.1,4.18);
\draw [line width=1.2pt,color=dgreen] (10.6,7.1)-- (11.7,2.3);
\draw [line width=1.2pt,color=dgreen] (4.1,3.6)-- (11.2,2.1);
\draw [line width=1.2pt,color=dgreen] (6.8,7.2)-- (10.1,7.2);

\draw [->,line width=1.2pt,color=dblue] (3.5,6.2) -- (2.9,6.6);
\draw [->,line width=1.2pt,color=dblue] (8,8.5) -- (8,9.3);
\draw [->,line width=1.2pt,color=dblue] (12.2,6.8) -- (12.7,7.3);
\draw [->,line width=1.2pt,color=dblue] (12.7,3.2) -- (13.3,2.8);
\draw [->,line width=1.2pt,color=dblue] (9.1,1.2) -- (9.3,0.5);
\draw [->,line width=1.2pt,color=dblue] (6,1.9) -- (5.6,1.2);
\draw [->,line width=1.2pt,color=dgreen] (4.7,5.1) -- (4.1,5.6);
\draw [->,line width=1.2pt,color=dgreen] (7.7,7.2) -- (7.7,7.9);
\draw [->,line width=1.2pt,color=dgreen] (10.9,5.6) -- (11.7,5.8);
\draw [->,line width=1.2pt,color=dgreen] (8.5,2.7) -- (8.3,1.9);

\draw [color=dgrey](7.7,5.7) node[anchor=north west] {{\Huge $\mathcal{P}_{\geq 3}$}};
\draw [color=lblue](12.3,9.6) node[anchor=north west] {{\Huge $\mathcal{P}_1$}};
\draw [color=lgreen](10.16,8.3) node[anchor=north west] {{\Huge$\mathcal{P}_2$}};
\draw [color=dblue](4.8,9.3) node[anchor=north west] {{\Huge $\mathit{N}_1$}};
\draw [color=dgreen](12, 2) node[anchor=north west] {{\Huge $\mathit{N}_2 \setminus \mathit{N}_1$}};
\draw [color=lblue](8.8,10.3) node[anchor=north west] {{\huge $P$}};
\draw [color=dblue](6.7,9.5) node[anchor=north west] {{\huge $\vec{s}_P$}};
\begin{scriptsize}
\draw [fill=lblue] (8.6,9.5) circle (3pt);
\draw [fill=lblue] (13.4,7) circle (3pt);
\draw [fill=lblue] (14,3) circle (3pt);
\draw [fill=lblue] (10.1,0.5) circle (3pt);
\draw [fill=lblue] (4.8,1.1) circle (3pt);
\draw [fill=lblue] (2.5,7.2) circle (3pt);
\draw [fill=lgreen] (8.5,7.7) circle (3pt);
\draw [fill=lgreen] (4.3,6.1) circle (3pt);
\draw [fill=lgreen] (12.1,5) circle (3pt);
\draw [fill=lgreen] (7.6,2.1) circle (3pt);
\draw [fill=dgrey] (7,4.8) circle (3pt);
\draw [fill=dgrey] (7.6,6.1) circle (3pt);
\draw [fill=dgrey] (9.4,5.5) circle (3pt);
\draw [fill=dgrey] (9.6,3.9) circle (3pt);
\draw [fill=dgrey] (8.2,4.2) circle (3pt);
\draw [fill=dgrey] (5.5,4.7) circle (3pt);
\end{scriptsize}
\end{tikzpicture}
}%
    \caption{A visualization of \cref{constr:Tc}.}
    \label{fig:ConstructionOfN(SP)}
\end{figure}

Before we turn to the proof that every inessential node of $N(\vS, \cP)$ satisfies the assumptions of \cref{lem:reflem}, let us first verify a property of $N(\vS, \cP)$ which \cref{fig:ConstructionOfN(SP)} already indicates. In \cref{fig:ConstructionOfN(SP)} all the separations $\vs_P$ in $\vN$ point `outwards'. In particular, in \cref{fig:ConstructionOfN(SP)} there is no pair of separations $s_{P}, s_{P'}$ such that $\vs_P$ and $\vs_{P'}$ point `towards' each other. The next lemma now claims that \cref{fig:ConstructionOfN(SP)} is actually accurate with respect thereto:

\begin{LEM}\label{lem:SepsInNDontPointTowardsEachOther}
    There is no pair of separations $s_P, s_{P'} \in N(\vS, \cP)$ such that $\vs_P < \sv_{P'}$.
\end{LEM}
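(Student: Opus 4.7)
My plan is to argue by contradiction. Suppose there exist $s_P, s_{P'} \in N(\vS, \cP)$ with $\vs_P < \sv_{P'}$; by the involution this is equivalent to $\vs_{P'} < \sv_P$. Let $i$ and $j$ be the steps of \cref{constr:Tc} at which $\vs_P$ and $\vs_{P'}$ are chosen. By swapping the roles of $P$ and $P'$ if necessary, I may assume $i \leq j$, so that $P' \in \cP_{\geq j} \subseteq \cP_{\geq i}$. Since the sets $\cP_\ell$ are pairwise disjoint, we automatically have $P \neq P'$ whenever $i < j$; the remaining case $P = P'$ would force $s_P$ to be strictly small, which I plan to rule out separately via an auxiliary verification that no separation of $N(\vS, \cP)$ is strictly small.

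The central claim will be that $\sv_P \leq \vm$ for every $\vm \in M_{P',j}$. Once the claim is established, taking the infimum over $M_{P', j}$ will yield $\sv_P \leq \inf M_{P',j} = \vs_{P'}$, contradicting $\vs_{P'} < \sv_P$. To prove the claim, I will first verify that $\sv_P \in P' \cap \vS_j$: the inclusion $\sv_P \in P'$ follows from the $P$-exclusivity of $\vs_P$ in $\cP_{\geq i}$ together with $P' \in \cP_{\geq i} \setminus \{P\}$, while $\sv_P \in \vS_j$ follows from $s_P \in N_i \subseteq N_{j-1}$ and the nestedness of $N_{j-1}$. For a fixed $\vm \in M_{P',j}$, since $\vm \in \vS_j$ and $s_P \in N_{j-1}$, the separations $m$ and $s_P$ are nested.

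The heart of the proof is then a case analysis according to which pair of orientations of $m$ and $s_P$ is comparable. When $\vm$ and $\sv_P$ are comparable, $\vm \vee \sv_P$ reduces to one of $\vm, \sv_P$ and so lies in $\vS_j$; the profile property of $P'$ applied to $\vm, \sv_P \in P'$ then forces $\vm \vee \sv_P \in P'$, and the maximality of $\vm$ in $P' \cap \vS_j$ combined with $\vm \vee \sv_P \geq \vm$ yields $\vm \vee \sv_P = \vm$, i.e., $\sv_P \leq \vm$. Otherwise $\vm$ and $\vs_P$ are comparable: the subcase $\vs_P < \vm$ violates the consistency of $P'$ (since $\sv_P, \vm \in P'$); the subcase $\vs_P = \vm$ contradicts the $P$-exclusivity of $\vs_P$ (as $\vm \in P'$); and the subcase $\vm < \vs_P$ combined with $\vs_{P'} \leq \vm$ gives $\vs_{P'} \leq \vs_P$, which together with the assumed $\vs_{P'} < \sv_P$ forces $\vs_{P'} < \sv_{P'}$, again excluded by the non-smallness auxiliary fact. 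The main technical obstacle I expect is precisely this last subcase, where $\vm \vee \sv_P$ need not lie in $\vS$ at all, so the clean profile-based maximality argument does not apply and one must route through the smallness of $s_{P'}$ instead.
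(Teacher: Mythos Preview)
Your overall strategy mirrors the paper's: after the WLOG reduction, show that $\sv_P$ lies below every element of $M_{P',j}$ and hence below $\vs_{P'} = \inf(M_{P',j})$, contradicting $\vs_{P'} < \sv_P$. The difficulty is that your entire case analysis is predicated on $m$ and $s_P$ being nested, which you justify via ``$s_P \in N_i \subseteq N_{j-1}$''. That inclusion only holds when $i < j$. When $i = j$ (and $P \neq P'$), the separation $s_P$ lies in $N_j \setminus N_{j-1}$, while $\vm \in M_{P',j} \subseteq \vS_j$ is only guaranteed to be nested with $N_{j-1}$; so $m$ and $s_P$ may genuinely cross, and none of your four subcases applies.

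This is not a removable edge case. In the paper's proof (with the opposite WLOG convention $j \leq i$), the nested scenario is disposed of quickly, and the substantive work is precisely the crossing case, which is shown to force $i = j$ and is then handled by a submodularity argument: from $\vr \in M_{P,i}$ crossing $s_{P'}$ one deduces $\vr \vee \sv_{P'} \notin \vS_i$ by maximality of $\vr$, hence $\vr \wedge \sv_{P'} \in \vS_i$ by submodularity, and then one passes to $\rv \vee \vs_{P'} \in P'$ and finds a second crossing with some $\vr' \in M_{P',i}$, contradicting the maximality of one of $\vr, \vr'$ via \cref{lem:Fischlemma}. Your proposal contains no analogue of this step, so the case $i = j$ is simply missing.

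A secondary point: the auxiliary fact that no $\vs_Q$ is strictly small is less routine than you suggest. A maximal element of $Q \cap \vS_i$ can in principle be small (consistency only constrains distinct unoriented separations, and profiles here are not assumed regular), so ruling out $\vs_Q < \sv_Q$ requires an actual argument rather than a remark. The paper's proof does not invoke such a fact; if you want to route the subcase $\vm < \vs_P$ through it, you should supply one.
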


\begin{proof}
    Suppose for a contradiction that there are separations $s_P, s_{P'} \in N(\vS, \cP)$ with $\vs_P < \sv_{P'}$. Let $i, j \in [K]$ be the indices with $P \in \cP_i$ and $P' \in \cP_j$, and assume without loss of generality that~$j \leq i$. Then it follows from the $P'$-exclusivity of $\vs_{P'}$ for $\cP_{\geq j}$ that $\sv_{P'} \in P$. Moreover, we have $\vs_{P'} \in N_j$, which implies that $\vs_{P'} \in \vS_i$, and in particular $\sv_{P'} \in \vS_i$.
    As $M_{P,i}$ is the set of all maximal elements in $P \cap \vS_i$ and $\sv_{P'} \in P \cap \vS_i$, either there is a separation $\vr \in M_{P,i}$ which crosses $s_{P'}$, or $\sv_{P'} \leq \vr$ for all $\vr \in M_{P,i}$. Since the latter case implies that~$\sv_{P'} \leq \vs_P$, which contradicts that $\vs_P < \sv_{P'}$, we may assume that there is a separation $\vr \in M_{P,i}$ that crosses $\vs_{P'}$. In particular, it follows that $i \leq j$ and thus $i = j$ since $r \in S_i$ is nested with $N_\ell$ for every $\ell < i$. 
    
    As $\vr$ is maximal in $P \cap \vS_{i}$, we have $\vr \vee \sv_{P'} \notin P \cap \vS_i$, which implies that $\vr \vee \sv_{P'} \notin \vS_i$ because $P \cap \vS_i$ is a profile of $S_i$. Thus, by the submodularity of $\vS_i$, we have $\vr \wedge \sv_{P'} \in \vS_i$. 
    As $\vr$ is $P$-exclusive for $\cP_{\geq i}$ and $P' \in \cP_j \subseteq \cP_{\geq i}$, it follows that $\rv \in P'$, and thus, since $P'$ is a profile, $\rv \vee \vs_{P'} \in P'$.
    By the same argument as above, there has to be a separation $\vr' \in M_{P', i}$ that crosses $\rv \vee \vs_{P'}$, which by \cref{lem:Fischlemma} implies that $r'$ crosses $r$. But this contradicts the maximality of either $\vr \in M_{P,i}$ or $\vr' \in M_{P', i}$ as by the submodularity of $\vS_i$ at least one of~$\vr \vee \rv'$ or $\rv \vee \vr'$ is an element of $\vS_i$ and thus contained in $P \cap \vS_i$ or $P' \cap \vS_i$, respectively.
\end{proof}

We now prove that every inessential node of the tree set $N := N(\vS, \cP)$ from \cref{constr:Tc} satisfies the conditions of \cref{lem:reflem}. Our proof will consist of two steps. First, we show that every $\vs_P \in \vN$ is closely related to~$P$. Then, we show that every separation $\vr \in \vN$ which is contained in an inessential node of $N$ is of the form $\vr = \sv_P$ where $P$ is some profile in $\cP$. This then clearly implies the assertion.

In order to prove that every $\vs_P$ is closely related to $P \in \cP_i$, we first show that the infimum of a set of separations which are all closely related to some profile is again closely related to that profile. It then follows that $\vs_P$ is closely related to $P$ as soon as we can show that $M_{P,i}$ is closely related to it. For this, we prove a more general proposition whose stronger assertion we will need in a following chapter.

\begin{PROP}\label{prop:InfimaOfClRelSetsAreClRel}
    Let $\vs \in \vS$, and let $M \subseteq \vS$ be some set of separations such that every~$\vm \in M$ is closely related to some profile $Q_m$ of $S$ which satisfies that $\vs \in Q_m$. Then $\vs \wedge \inf(M) := \vs \wedge \bigwedge_{\vm \in M} \vm$ is an element of $\vS$. Moreover, if $\vs$ is closely related to some profile $P$ of $S$, then $\vs \wedge \inf(M)$ is closely related to $P$.
\end{PROP}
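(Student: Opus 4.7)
The proof will hinge on the following basic observation about regular profiles: if $\vt \in \vS$ satisfies $\vt \leq \vs$ and $\vs$ lies in a regular profile $Q$ of $S$, then $\vt \in Q$. To verify this, note that a degenerate $\vt$ is automatically in $Q$; if $\vt$ orients the same separation as $\vs$ but differs from $\vs$, then $\vt = \sv$ and $\sv \leq \vs$, making $\vs$ co-small and contradicting the regularity of $Q$; and if $\vt$ orients a distinct separation $t \neq s$, then $\vt < \vs$ strictly, so consistency of $Q$ together with $\vs \in Q$ rules out $\tv \in Q$, forcing $\vt \in Q$ by the orientation property.

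For the first assertion, I will first prove by induction on $n$ that for every finite subset $\{\vm_1, \ldots, \vm_n\} \subseteq M$, the element $\vt_n := \vs \wedge \vm_1 \wedge \cdots \wedge \vm_n$ lies in $\vS$. The base case is $\vs \wedge \vm_1 \in \vS$, which holds because $\vm_1$ is closely related to $Q_{m_1}$ and $\vs \in Q_{m_1}$. For the inductive step, the hypothesis gives $\vt_{n-1} \in \vS$, and since $\vt_{n-1} \leq \vs \in Q_{m_n}$, the observation yields $\vt_{n-1} \in Q_{m_n}$; hence $\vt_n = \vt_{n-1} \wedge \vm_n \in \vS$ by the close relation of $\vm_n$ to $Q_{m_n}$. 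The passage from finite subsets to the full infimum uses the identity $\vs \wedge \inf(M) = \bigwedge_{\vm \in M}(\vs \wedge \vm)$ together with a completeness argument modelled on \cite{CanonicalToTSubmodular}*{Lemma 4.4}.

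For the moreover part, suppose additionally that $\vs$ is closely related to $P$. I will invoke \cref{prop:ShowingThatASepIsCloselyRelated} with $\vr := \vs \wedge \inf(M)$. Since $\vr \leq \vs$ and $\vs$ is closely related to $P$, it suffices to check that $\vr \wedge \vu \in \vS$ for every $\vu \in \vS$ with $\vu \leq \vs$. But $\vr \wedge \vu = \vu \wedge \inf(M)$ because $\vu \leq \vs$, and by the observation applied to $\vu$, we have $\vu \in Q_m$ for every $\vm \in M$. So the same inductive argument as above, run with $\vu$ in place of $\vs$, produces $\vu \wedge \inf(M) \in \vS$.

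The main technical obstacle is the passage from finite $M$ to the full infimum: the finite case is entirely driven by the key observation and the definition of close relation, whereas the infinite case requires showing that the relevant infima remain in $\vS$, a point that is delicate in general but is tractable using the techniques of \cite{CanonicalToTSubmodular}.
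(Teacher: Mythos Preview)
Your argument is essentially the paper's: both induct over $M$, at each step using that $\vt_{n-1}\le\vs\in Q_{m_n}$ forces $\vt_{n-1}\in Q_{m_n}$ and then invoking the close relation of $\vm_n$ to $Q_{m_n}$. The paper removes one element at a time and carries the `closely related to $P$' clause through the induction rather than deferring it to a single application of \cref{prop:ShowingThatASepIsCloselyRelated} at the end, but this is cosmetic.

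Two remarks. First, your opening observation assumes $Q$ is \emph{regular}, which the proposition does not require of the $Q_m$. You do not need regularity: consistency of $Q_m$ alone already gives $\vt\in Q_m$ whenever $\vt\le\vs\in Q_m$ with $t\neq s$, and the paper simply writes `by the consistency of $Q_r$' at the corresponding step. Second, the paper proceeds by induction on $|M|$ and does not address infinite $M$ at all; your deferral of that case to `a completeness argument modelled on \cite{CanonicalToTSubmodular}*{Lemma~4.4}' is not a proof either, but it is no less than what the paper provides.
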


\begin{proof}
    We proceed by induction on $\abs{M}$. If $\abs{M} = 0$ there is nothing to show, so we may assume that $\abs{M} \geq 1$ and that the assertion holds for all sets $M' \subsetneq M$. Let $\vr \in M$ be arbitrary. By the induction hypothesis, $\vt := \vs \wedge \inf(M\setminus \{\vr\})$ is an element of $\vS$, and it is closely related to~$P$ if $\vs$ is closely related to~$P$. It follows that also $\vs \wedge \inf(M) = \vt \wedge \vr$ is an element of $\vS$ because $\vr$ is closely related to $Q_r$ and $\vt \leq \vs \in Q_r$ by the consistency of $Q_r$. 
    
    Now suppose that $\vs$ is closely related to $P$. Recall that, by induction, $\vt$ is also closely related to $P$. So to prove that $\vs \wedge \inf(M) = \vt \wedge \vr$ is closely related to $P$ it is, by \cref{prop:ShowingThatASepIsCloselyRelated}, enough to show that $\vx \wedge (\vt \wedge \vr) \in \vS$ for all $\vx \leq \vt$. For this, let some $\vx \leq \vt$ be given. Then $\vx \wedge (\vt \wedge \vr) = \vx \wedge \vr \in \vS$ because $\vr$ is closely related to $Q_r$ and $\vx \in Q_r$ by the consistency of $Q_r$.
\end{proof}

The next lemma shows that every separation in $M_{P,i}$ is closely related to $P \in \cP_i$ under the assumption that for every profile $P' \in \cP_{<i}$ the separation $\vs_{P'}$ is closely related to $P'$. It then follows by induction and \cref{prop:InfimaOfClRelSetsAreClRel} that $\vs_P$ is closely related to $P$.

\begin{LEM}\label{lem:NestedMaxSepsAreClRel}
    Let $P$ be a profile of $S$, and let $Y \subseteq P$ be a nested set of separations such that the inverse $\yv$ of every separation $\vy \in Y$ is closely related to some profile $Q_y$ of $S$. Further, let $P_Y \subseteq P$ be the set of all separations in $P$ that are nested with $Y$. Then every maximal separation in $P_Y$ is closely related to $P$.
\end{LEM}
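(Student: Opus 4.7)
The plan is to argue by contradiction. Suppose $\vs$ is maximal in $P_Y$ but not closely related to $P$; then there is some $\vr \in P$ with $\vs \wedge \vr \notin \vS$. The submodularity of $\vS$ forces $\vs' := \vs \vee \vr \in \vS$, and the profile property of $P$ (applied to $\vs, \vr \in P$) lifts this to $\vs' \in P$. One checks $\vs' > \vs$ strictly, for otherwise $\vr \leq \vs$ and $\vs \wedge \vr = \vr \in \vS$, contradicting the assumption. If $\vs'$ were nested with $Y$, it would lie in $P_Y$ and strictly exceed $\vs$, contradicting maximality; hence some $\vy \in Y$ crosses $\vs'$.

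The heart of the proof is then to derive a contradiction from this $\vy$. Since $\vs \in P_Y$ is nested with $\vy$ and both lie in $P$, the orientations of $\vs$ and $\vy$ fall into one of four nested configurations: $\vs \leq \vy$, $\vy \leq \vs$, $\vs \leq \yv$, or $\vs \geq \yv$. Three of them are disposed of quickly. The configuration $\vs \geq \yv$ rewrites via involution as $\sv \leq \vy$, and if strict this directly violates the consistency of $P$ (using $\vs, \vy \in P$). The configuration $\vy \leq \vs$ gives $\vy \leq \vs \leq \vs'$, so that $\vy$ is nested with $\vs'$, contradicting the crossing. The configuration $\vs \leq \vy$ puts $\vy$ into $P_Y$ (it lies in $P$ and is nested with every element of the nested set $Y$), so either the strict inequality $\vs < \vy$ contradicts the maximality of $\vs$, or $\vs = \vy$, in which case $\vs' \geq \vs = \vy$ is comparable to $\vy$, again contradicting the crossing.

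The only remaining configuration is $\vs \leq \yv$, and this is precisely where the close-relation hypothesis on $Y$ is essential, since it was not used above. Consistency of $Q_y$, together with $\yv \in Q_y$, first forces $\vs \in Q_y$ (the alternative $\sv \in Q_y$ would give $\yv, \sv \in Q_y$ with $\vy < \sv$, a consistency violation). Splitting on the orientation of $\vr$ in $Q_y$: if $\vr \in Q_y$, the profile property of $Q_y$ places $\vs' = \vs \vee \vr$ in $Q_y$, and the close relation of $\yv$ to $Q_y$ gives $\yv \wedge \vs' \in \vS$; if $\rv \in Q_y$, then close relation applied to $\rv$ yields $\yv \wedge \rv \in \vS$, equivalently $\vy \vee \vr \in \vS$, and the profile property of $P$ then places $\vy \vee \vr$ in $P$. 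In either subcase the plan is to use this new corner to exhibit a separation in $P$ that strictly exceeds $\vs$ and is nested with every element of $Y$, contradicting the maximality of $\vs$ in $P_Y$.

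The main obstacle is this last extraction step in the case $\vs \leq \yv$: verifying, via a consistency argument inside $P$, that the relevant corner separation actually lies in $P$, and showing, via \cref{lem:Fischlemma} applied to each pair $(\vy', \vs')$ with $\vy' \in Y$, that it is nested not merely with the particular $\vy$ at hand but with all of $Y$. Steps 1--3 are otherwise routine combinations of submodularity, the profile property, and the consistency of $P$, and do not draw on the close-relation hypothesis.
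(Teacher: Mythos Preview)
Your setup and the case analysis on a single crossing $\vy \in Y$ are correct and useful: every $\vy \in Y$ that crosses $\vs' = \vs \vee \vr$ must satisfy $\vs < \yv$. This is precisely the paper's reduction to the set $Y' := \{\vy \in Y : \vs < \yv\}$. The gap is in the resolution you sketch for that remaining case.

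First, in the sub-case $\rv \in Q_y$ you obtain $\vy \vee \vr \in P$, but this separation need not lie above $\vs$: from $\vs \leq \yv$ one cannot infer $\vs \leq \vy \vee \vr$, so it does not witness a failure of the maximality of $\vs$ in $P_Y$. Second, and more fundamentally, you only treat a single $\vy$ at a time. Even in the sub-case $\vr \in Q_y$, where $\yv \wedge \vs' \in \vS$ lies in $P$ above $\vs$, you have not shown it is nested with all of $Y$. Your appeal to \cref{lem:Fischlemma} is misapplied: to conclude that some $\vy' \in Y$ is nested with the corner $\yv \wedge \vs'$ you would need $\vy'$ to be nested with both $y$ and $s'$, but other elements of $Y$ may also cross $s'$. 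An iteration is not obviously terminating either, since the new corner may introduce fresh crossings; and you also have not verified that $\yv \wedge \vs'$ strictly exceeds $\vs$.

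The paper handles this by treating all crossing $\vy$ simultaneously. It first replaces the arbitrary $\vr \in P$ by a \emph{minimal} $\vx' \in P$ with the same meet $\vs \wedge \vx' = \vs \wedge \vr$; the close-relation hypothesis then forces $\xv' \in Q_y$ for every $\vy \in Y'$ crossing $x'$ (otherwise $\yv \wedge \vx'$ would be a strictly smaller choice, since $\vs < \yv$ gives $\vs \wedge (\yv \wedge \vx') = \vs \wedge \vx'$). With $(\vs \vee \vx')^* \leq \xv' \in Q_y$ for all such $\vy$, \cref{prop:InfimaOfClRelSetsAreClRel} shows that $\vz := (\vs \vee \vx') \vee \bigvee \vy$, the join taken over all crossing $\vy$, lies in $\vS$. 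This $\vz$ is by construction nested with every such $\vy$, nested with the rest of $Y$ via \cref{lem:Fischlemma} and the relation $\vs \leq \vz$, and strictly exceeds $\vs$, giving the contradiction. Your argument is missing both the minimality reduction and this simultaneous join.
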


\begin{proof}
	Let $\vr \in P$ be any maximal separation in $P_Y$, and let some $\vx \in P$ be given. We need to show that $\vr \wedge \vx \in \vS$.
	For this, set $Y' := \{\vy \in Y : \vr < \yv\}$ and first assume that there is a separation $\vx' \in P$ which is nested with $Y'$ and satisfies $\vr \wedge \vx' = \vr \wedge \vx$. Then the assertion follows. If $\vr \wedge \vx' \notin \vS$, then it follows from the submodularity of $S$ that $\vr \vee \vx' \in \vS$. Since $P$ is a profile, we then have $\vr \vee \vx' \in P$, which contradicts the maximality of $\vr$ in $P_Y$ as $\vr \vee \vx'$ is nested with $Y$ by \cref{lem:Fischlemma}. 
	
	To conclude the proof it thus suffices to find a separation $\vx'$ as above.
	To this end, we pick a separation~$\vx' \in P$ which is minimal (with respect to the partial order on $\vS$) so that $\vr \wedge \vx' = \vr \wedge \vx$. For this note that $\vx$ itself is a candidate for $\vx'$ (see \cref{fig:NestedMaxSepsAreClRel1}). 

    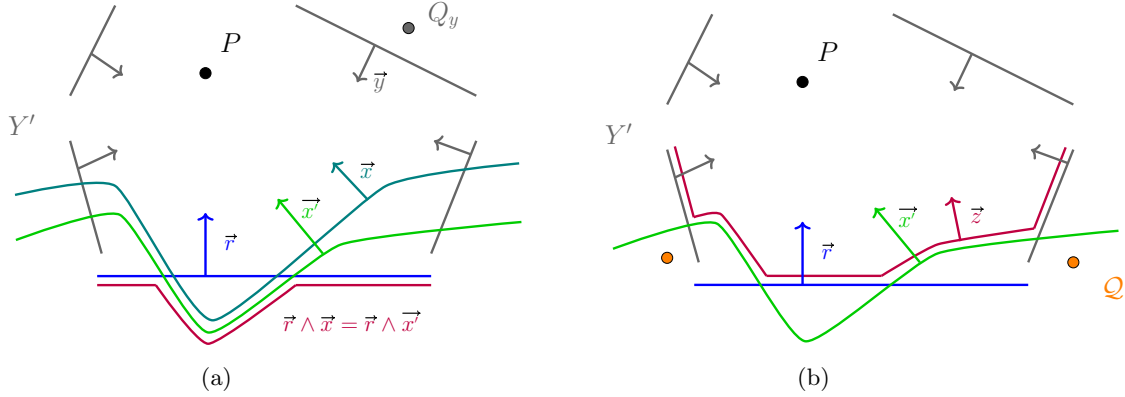
\begin{figure}[h]
        \centering
        \hspace{-15mm}
        \begin{subfigure}[b]{0.35\linewidth}
            \definecolor{lgrey}{rgb}{0.4,0.4,0.4}
\definecolor{ligrey}{rgb}{0.8,0.8,0.8}
\definecolor{lgreen}{rgb}{0,0.8,0}
\scalebox{0.6}{%
\begin{tikzpicture}
\draw [line width=1.5pt,color=lgrey] (5,10)-- (4,8);
\draw [line width=1.5pt,color=lgrey] (4,7)-- (4.7,4.5);
\draw [line width=1.5pt,color=lgrey] (9,10)-- (13,8);
\draw [line width=1.5pt,color=lgrey] (13,7)-- (12,4.5);

\draw [line width=1.5pt, color=blue] (4.6,4)-- (12,4);


\draw [teal, line width=1.5pt] plot [smooth, tension=0.3] coordinates {(2.78,5.8) (5,6) (7.16,3.02) (11,6) (13.95,6.5)};
\draw [lgreen, line width=1.5pt] plot [smooth, tension=0.3] coordinates {(2.8,4.8) (5.1,5.36) (7.06,2.75) (10,4.7) (14,5.2)};

\draw [purple, line width=1.5pt] (4.6, 3.8) -- (5.9, 3.8);
\draw [purple, line width=1.5pt] plot [smooth, tension=0.3] coordinates {(5.9,3.8) (7.06,2.5) (9, 3.8)};
\draw [purple, line width=1.5pt] (9, 3.8) -- (12, 3.8);
\draw (8.6, 3.3) node[purple,anchor=north west] {\LARGE $\vr \wedge \vx = \vr \wedge \vx'$};

\draw [->,line width=1.5pt,color=blue] (7,4) -- (7,5.4);
\draw (7.3,5) node[anchor=north west,color=blue] {\LARGE $\vr$};

\draw [->,line width=1.5pt,color=teal] (10.6,5.69) -- (9.82,6.52);
\draw (10.3,6.6) node[anchor=north west,color=teal] {\LARGE $\vx$};
\draw [->,line width=1.5pt,color=lgreen] (9.6,4.5) -- (8.6,5.71);
\draw (9,5.8) node[anchor=north west,color=lgreen] {\LARGE $\vx'$};

\draw [->,line width=1.5pt,color=lgrey] (4.47,8.93) -- (5.17,8.45);
\draw [->,line width=1.5pt,color=lgrey] (4.17,6.38) -- (5.05,6.79);
\draw [->,line width=1.5pt,color=lgrey] (10.75,9.12) -- (10.36,8.3);
\draw (10.6,8.6) node[anchor=north west,color=lgrey] {\LARGE $\vy$};
\draw [->,line width=1.5pt,color=lgrey] (12.88,6.7) -- (12.06,7);


\draw (7.2,9.5) node[anchor=north west] {\huge $P$};
\draw (11.8,10.2) node[anchor=north west,color=lgrey] {\huge $Q_y$};
\draw (2.5,7.7) node[anchor=north west,color=lgrey] {\huge $Y'$};

\begin{scriptsize}
\draw [fill=black] (7,8.5) circle (3.5pt);
\draw [fill=lgrey] (11.5,9.5) circle (3.5pt);
\end{scriptsize}
\end{tikzpicture}
}%
\subcaption{}
            \label{fig:NestedMaxSepsAreClRel1}
        \end{subfigure}
        \hspace{20mm}
         \begin{subfigure}[b]{0.35\linewidth}
            \definecolor{lgrey}{rgb}{0.4,0.4,0.4}
\definecolor{ligrey}{rgb}{0.8,0.8,0.8}
\definecolor{lgreen}{rgb}{0,0.8,0}
\scalebox{0.6}{%
\begin{tikzpicture}
\draw [line width=1.5pt,color=lgrey] (5,10)-- (4,8);
\draw [line width=1.5pt,color=lgrey] (4,7)-- (4.7,4.5);
\draw [line width=1.5pt,color=lgrey] (9,10)-- (13,8);
\draw [line width=1.5pt,color=lgrey] (13,7)-- (12,4.5);

\draw [line width=1.5pt, color=blue] (4.6,4)-- (12,4);


\draw [purple, line width=1.5pt] (4.59,5.5) -- (4.17,7.07);
\draw [purple, line width=1.5pt] plot [smooth, tension=0.3] coordinates {(4.59,5.5) (5.2,5.56) (6.2,4.2)};
\draw [purple,line width=1.5pt] (6.2,4.2) -- (8.75, 4.2);

\draw [purple, line width=1.5pt] plot [smooth, tension=0.3] coordinates {(8.75, 4.2)  (10,4.9) (12.13,5.25)};
\draw [purple, line width=1.5] (12.13,5.25) -- (12.83,7.05);

\draw [->, line width=1.5pt,color=purple] (10.5,4.97) -- (10.3,5.95);
\draw (10.6,5.8) node[purple,anchor=north west] {\LARGE $\vz$};

\draw [lgreen, line width=1.5pt] plot [smooth, tension=0.3] coordinates {(2.8,4.8) (5.1,5.36) (7.06,2.75) (10,4.7) (14,5.2)};

\draw [->,line width=1.5pt,color=blue] (7,4) -- (7,5.4);
\draw (7.3,5) node[anchor=north west,color=blue] {\LARGE $\vr$};

\draw [->,line width=1.5pt,color=lgreen] (9.6,4.5) -- (8.6,5.71);
\draw (9,5.8) node[anchor=north west,color=lgreen] {\LARGE $\vx'$};

\draw [->,line width=1.5pt,color=lgrey] (4.47,8.93) -- (5.17,8.45);
\draw [->,line width=1.5pt,color=lgrey] (4.17,6.38) -- (5.05,6.79);
\draw [->,line width=1.5pt,color=lgrey] (10.75,9.12) -- (10.36,8.3);
\draw [->,line width=1.5pt,color=lgrey] (12.88,6.7) -- (12.06,7);


\draw (7.2,9.5) node[anchor=north west] {\huge $P$};
\draw (13.5,4.3) node[orange,anchor=north west] {\huge $\cQ$};
\draw (2.5,7.7) node[anchor=north west,color=lgrey] {\huge $Y'$};

\begin{scriptsize}
\draw [fill=black] (7,8.5) circle (3.5pt);
\draw [fill=orange] (13,4.5) circle (3.5pt);
\draw [fill=orange] (4,4.6) circle (3.5pt);
\end{scriptsize}
\end{tikzpicture}
}%
\subcaption{}
            \label{fig:NestedMaxSepsAreClRel2}
        \end{subfigure}
        \centering
        \caption{Sketch for the proof of \cref{lem:NestedMaxSepsAreClRel}.}
        \label{fig:NestedMaxSepsAreClRel}
    \end{figure}
	
	We claim that $x'$ is nested with $Y'$ and thus the desired separation for the above argument. 
	To this end, suppose for a contradiction that $x'$ is not nested with $Y'$. Set
	\[
	\cQ := \{Q_y : \vy \in Y' \text{ and } x' \text{ crosses } y\},
	\]
	and first assume that there is a profile $Q_y \in \cQ$ with $\vx' \in Q_y$. Then it follows from $\yv$ being closely related to $Q_y$ that~$\yv \wedge \vx' \in \vS$. But this contradicts the choice of $\vx'$: as $y$ and $x'$ cross, $\yv \wedge \vx'$ is strictly smaller than $\vx'$. Moreover, since $\vr < \yv$ by assumption, we find that
	\[
	\vr \wedge (\yv \wedge \vx') = (\vr \wedge \yv) \wedge \vx' = \vr \wedge \vx' = \vr \wedge \vx. 
	\]
	Therefore, we may assume that $\xv' \in Q_y$ for all profiles $Q_y \in \cQ$.
	
	If $\vr \wedge \vx' \in \vS$, then we are done, so we may assume that $\vr \wedge \vx' \notin \vS$, which by the submodularity of $S$ implies that $\vr \vee \vx' \in \vS$. We then have~$(\vr \vee \vx')^* = \rv \wedge \xv' \leq \xv' \in Q_y$ for every $Q_y \in \cQ$ by the consistency of~$Q_y$. 
    Since each $\yv$ is closely related to~$Q_y$ by assumption, \cref{prop:InfimaOfClRelSetsAreClRel} implies that
	\[
	\vz := (\vr \vee \vx') \vee \bigvee_{Q_y \in \cQ} \vy = \Big((\vr \vee \vx')^* \wedge \bigwedge_{Q_y \in \cQ} \yv\Big)^* \in \vS
	\]
	(see \cref{fig:NestedMaxSepsAreClRel2}). By construction, $z$ is nested with $Y'$ and thus $z$ is nested with every separation in $Y$ that has an orientation which is greater than $\vr$. But since $\vr \leq \vz$, we have that $z$ is also nested with every separation in $Y$ that has an orientation which is smaller than $\vr$, and hence $z$ is nested with $Y$.
    Moreover, $\vz \in P$. Indeed, every $\vy \in Y$ is contained in $P$ by assumption, which implies that $\vz \in P$ as $P$ is a profile.
    Since $\vr \leq \vz$ by the definition of $z$, and $\vx' \leq \vz$ but~$\vx' \not\leq \vr$, it follows that $\vr < \vz$, which contradicts the maximality of $\vr$ in~$P_Y$. 
\end{proof}

We are now ready to prove that every $\vs_P$ is closely related to $P$.

\begin{LEM}\label{lem:spiscr}
	Every $\vs_P \in \vN(\vS, \cP)$ is closely related to $P$.
\end{LEM}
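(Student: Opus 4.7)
The plan is to argue by induction on the step $i$ of \cref{constr:Tc} at which $\vs_P$ is produced, i.e.\ the unique index with $P\in\cP_i$. The inductive hypothesis is that $\vs_{P'}$ is closely related to $P'$ for every $P'\in\cP_{<i}$. The base case $i=1$ is subsumed by the inductive step (with an empty set $Y$ below), where it reduces to \cref{prop:MaxSepsAreCloselyRelated}.

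For the inductive step, first observe that for any $P'\in\cP_j$ with $j<i$, the $P'$-exclusivity of $\vs_{P'}$ for $\cP_{\geq j}$ (together with $P\in\cP_{\geq j}$ and $P\neq P'$) forces $\sv_{P'}\in P$. Set $Y:=\{\sv_{P'}:P'\in\cP_{<i}\}\subseteq P$; then $Y$ is nested, and by the inductive hypothesis the inverse $\vs_{P'}$ of each element of $Y$ is closely related to the profile $P'$. Since a separation is nested with $\sv_{P'}$ precisely when it is nested with $s_{P'}$, the set $P_Y$ of all separations in $P$ nested with $Y$ coincides with $P\cap \vS_i$. By definition, every element of $M_{P,i}$ is maximal in $P\cap \vS_i=P_Y$, so \cref{lem:NestedMaxSepsAreClRel} applied to $P$ and $Y$ yields that every $\vm\in M_{P,i}$ is closely related to $P$.

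To pass from $M_{P,i}$ to its infimum $\vs_P$, I would fix any $\vm_0\in M_{P,i}$ (which exists since $P\in\cP_i$ forces $M_{P,i}\neq\emptyset$) and apply \cref{prop:InfimaOfClRelSetsAreClRel} with $\vs:=\vm_0$ and $M:=M_{P,i}\setminus\{\vm_0\}$. The hypotheses hold: $\vm_0\in P$, the seed separation $\vm_0$ is closely related to $P$, and every $\vm\in M$ is closely related to $P$ with $\vm_0\in P=Q_m$. The proposition then gives that $\vm_0\wedge \inf(M)=\inf(M_{P,i})=\vs_P$ is closely related to $P$, completing the induction.

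I do not expect a hard obstacle here: the two auxiliary lemmas, \cref{lem:NestedMaxSepsAreClRel} and \cref{prop:InfimaOfClRelSetsAreClRel}, have been tailored precisely for this use. The only non-mechanical steps are (i) recognising $P\cap\vS_i$ as the set $P_Y$ of \cref{lem:NestedMaxSepsAreClRel} for the natural choice of $Y$, and (ii) bootstrapping a single element of $M_{P,i}$ into the seed separation $\vs$ needed by \cref{prop:InfimaOfClRelSetsAreClRel}; both follow routinely from unpacking \cref{constr:Tc}.
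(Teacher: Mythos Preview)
Your proposal is correct and follows essentially the same approach as the paper's proof: induction on the step $i$, using \cref{lem:NestedMaxSepsAreClRel} with $Y$ the orientations of $N_{i-1}$ in $P$ to show every element of $M_{P,i}$ is closely related to $P$, and then \cref{prop:InfimaOfClRelSetsAreClRel} to pass to the infimum $\vs_P$. Your explicit bootstrapping of a seed $\vm_0$ for \cref{prop:InfimaOfClRelSetsAreClRel} and your folding of the base case into the inductive step (via empty $Y$) are minor presentational refinements of what the paper does more tersely.
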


\begin{proof}
    We show by induction on $i$ that $\vs_P$ is closely related to $P$ for every $P \in \cP_i$. 
    Since $\vs_P = \inf(M_P)$ for every $P \in \cP_1$, the base case follows directly from \cref{prop:MaxSepsAreCloselyRelated} and \cref{prop:InfimaOfClRelSetsAreClRel}.
	So let $i > 1$, and suppose that every $\vs_{P'} \in \vN_{i-1}$ is closely related to $P'$. Since every $\vs_{P'} \in \vN_{i-1}$ is $P'$-exclusive for~$\cP_{\geq {i-1}}$, and thus $\sv_{P'} \in P$ for every such $\vs_{P'}$, we can apply \cref{lem:NestedMaxSepsAreClRel} to $P$ and $N_{i-1}$. This yields that $M_{P,i}$ is closely related to $P$, which by \cref{prop:InfimaOfClRelSetsAreClRel} implies that $\vs_P$ is closely related to $P$ as well.
\end{proof}

We are now left to show that for every inessential node $\sigma$ of $N(\vS, \cP)$, and every separation $\vs \in \sigma$, there is a profile $P \in \cP$ such that $\sv = \vs_P$.
Note that, by the construction of~$N(\vS, \cP)$, it is clear that $\sv \in \{\vs_P, \sv_P\}$ for some profile $P \in \cP$. What we need to show is that the orientation is the correct one, i.e.\ $\sv = \vs_P$. To this end, we first show that every profile $P$ lives at the unique node which contains $\vs_P$ if $s_P \in N$:

\begin{LEM}\label{lem:Sepsarecr}
	Given a profile $P$ such that $s_P$ exists, then $P$ lives at the unique node of $N(\vS, \cP)$ that contains $\vs_P$.
\end{LEM}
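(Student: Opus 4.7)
The plan is to reduce the claim to verifying that $\vs_P$ is a maximal element of the orientation $P \cap \vN$ of $N := N(\vS, \cP)$. Since $P$ is a consistent orientation of $S$, its restriction $P \cap \vN$ is a consistent orientation of the regular tree set $N$, so $P$ lives at the unique node of $N$ given by the splitting star of the maximal elements of $P \cap \vN$. As $\vs_P \in P$ by $P$-exclusivity, it will suffice to show that no $\vr \in P \cap \vN$ satisfies $\vs_P < \vr$.

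To do this, I would suppose for contradiction that such $\vr$ exists, and recall that by \cref{constr:Tc} every element of $\vN$ has the form $\vs_Q$ or $\sv_Q$ for some profile $Q$ with $s_Q \in N$. If $\vr = \sv_Q$, then $\vs_P < \sv_Q$ is ruled out directly by \cref{lem:SepsInNDontPointTowardsEachOther}. Otherwise $\vr = \vs_Q$, and necessarily $Q \neq P$ (else $\vs_P < \vs_P$). Let $i$ and $j$ be the indices with $P \in \cP_i$ and $Q \in \cP_j$. In the case $j \leq i$ we have $P \in \cP_{\geq j}$, so the $Q$-exclusivity of $\vs_Q$ for $\cP_{\geq j}$ forces $\sv_Q \in P$, contradicting $\vs_Q = \vr \in P$. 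In the case $j > i$ we have $Q \in \cP_{\geq i}$, so the $P$-exclusivity of $\vs_P$ for $\cP_{\geq i}$ yields $\sv_P \in Q$; together with $\vs_Q \in Q$ and $\sv_Q < \sv_P$ (obtained from $\vs_P < \vs_Q$ by applying the involution), this violates consistency of $Q$ applied to the pair with $\vr' := \sv_Q$ and $\vs' := \sv_P$, since then $Q$ contains both $\rv' = \vs_Q$ and $\vs' = \sv_P$.

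The main subtlety I expect is that the $Q$-exclusivity of $\vs_Q$ only controls profiles in $\cP_{\geq j}$, so when $j > i$ it is no longer strong enough to place $P$ on the correct side of $s_Q$; one has to invoke the $P$-exclusivity of $\vs_P$ instead, together with a careful application of consistency, and this asymmetry is what distinguishes the two subcases. Once this is handled, combined with \cref{lem:SepsInNDontPointTowardsEachOther} for the $\sv_Q$-case, the required maximality follows.
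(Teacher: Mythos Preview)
Your proof is correct and follows essentially the same approach as the paper: both reduce to showing $\vs_P$ is maximal in $P\cap\vN$, rule out $\vr=\sv_Q$ via \cref{lem:SepsInNDontPointTowardsEachOther}, and derive a contradiction from exclusivity when $\vr=\vs_Q$. The paper handles the two subcases at once by noting that $\vs_P,\vs_{P'}\in P\cap P'$ (using consistency of $P'$ to get $\vs_P\in P'$ from $\vs_P<\vs_{P'}\in P'$) contradicts the exclusivity of whichever of $\vs_P,\vs_{P'}$ was added to $N$ first, whereas you make the case split on $j\le i$ versus $j>i$ explicit; the underlying argument is the same.
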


\begin{proof}
    Suppose for a contradiction that $P$ lives at a node $\sigma'$ of $N := N(\vS, \cP)$ which does not contain $\vs_P$. By \linebreak the definition of `node' and because $P$ lives at $\sigma'$, this implies that $\sigma'$ is the set of maximal elements in~$P \cap \vN$.\linebreak As $\vs_P$ is an element of $P \cap \vN$ but not of $\sigma'$, it follows that there is a separation $\vr \in \sigma'$ such that~$\vs_P < \vr$.
    
    By the definition of $N$, the separation $\vr$ has to be of the form $\vs_{P'}$ or $\sv_{P'}$ for some profile~$P' \in \cP$; however by \cref{lem:SepsInNDontPointTowardsEachOther} and because $\vs_P < \vr$ only $\vr = \vs_{P'}$ is possible.
    Therefore,~$\vs_P, \vs_{P'} \in P$. Further, by the definition of $\vs_{P'}$, we have $\vs_{P'} \in P'$, which, by the consistency of $P'$, implies that also~$\vs_P \in P'$. But then~$\vs_P, \vs_{P'} \in P \cap P'$, a contradiction to the exclusivity of either $\vs_P$ or $\vs_{P'}$, depending on which separation was added to $N$ first.
\end{proof}

It is now a simple corollary that the inessential nodes of $N(\vS, \cP)$ satisfy the conditions of \cref{lem:reflem}:

\begin{COR}\label{cor:Tciscool}
	For every inessential node $\sigma$ of $N(\vS,\cP)$ and every separation $\vs \in \sigma$ it holds that the inverse $\sv$ of $\vs$ is closely related to some profile in $\cP$.
\end{COR}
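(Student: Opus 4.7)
The plan is to combine \cref{lem:spiscr} with \cref{lem:Sepsarecr} in a short contradiction argument. By \cref{lem:spiscr}, every separation of the form $\vs_P$ appearing in $\vN(\vS,\cP)$ is closely related to the profile $P$. Therefore, to prove the corollary, it suffices to show that for every inessential node $\sigma$ and every $\vs \in \sigma$, the inverse $\sv$ equals $\vs_P$ for some $P \in \cP$. Since $N(\vS,\cP)$ consists precisely of the separations $s_P$ with $P \in \cP_i$ for some $i$, every $\vs \in \sigma$ automatically satisfies $\vs \in \{\vs_P, \sv_P\}$ for some $P$; so the task reduces to excluding the case $\vs = \vs_P$.

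First I would fix an inessential node $\sigma$ of $N(\vS,\cP)$ and a separation $\vs \in \sigma$, and assume for a contradiction that $\vs = \vs_P$ for some profile $P \in \cP$. Then \cref{lem:Sepsarecr} tells me that $P$ lives at the unique node of $N(\vS,\cP)$ containing $\vs_P$. But since $\vs = \vs_P \in \sigma$, the unique node of $N(\vS,\cP)$ containing $\vs_P$ is precisely $\sigma$. Hence $\sigma \subseteq P$, which by the definition of `essential' means that $\sigma$ is essential for $\cP$, contradicting our assumption.

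Thus $\vs = \sv_P$, i.e.\ $\sv = \vs_P$, and \cref{lem:spiscr} yields that $\sv$ is closely related to $P$, as required. The only subtle point in this argument is ensuring that $\sigma$ really is the \emph{unique} node of $N(\vS,\cP)$ containing $\vs_P$, but this follows immediately from the fact that $N(\vS,\cP)$ is a regular tree set, so each oriented separation lies in a unique node. I do not expect any real obstacle here; the corollary is essentially a bookkeeping consequence of the two preceding lemmas.
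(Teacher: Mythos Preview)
Your proposal is correct and follows essentially the same approach as the paper: both arguments use the construction of $N(\vS,\cP)$ to obtain $\vs\in\{\vs_P,\sv_P\}$, invoke \cref{lem:Sepsarecr} to exclude $\vs=\vs_P$ when $\sigma$ is inessential, and then apply \cref{lem:spiscr}. The paper merely compresses your contradiction step into a single sentence.
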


\begin{proof}
By the construction of $N(\vS, \cP)$, there is some profile $P \in \cP$ such that $\vs \in \{\vs_P, \sv_P\}$.
Since $\sigma$ is inessential, \cref{lem:Sepsarecr} implies that $\vs = \sv_P$. 
It follows from \cref{lem:spiscr} that $\sv$ is closely related to $P$.
\end{proof}

We are now ready to prove our second main result by applying \cref{lem:reflem} to $N(\vS, \cP)$:

\begin{THM}\label{thm:RefiningACanonicalToT}
	Let $\vS$ be a submodular separation system, and
	let $\cF$ be a friendly set of stars in $\vS$.
	Then there are nested sets $\tilde{N} \subseteq N \subseteq S$ such that:
	\begin{itemize}
		\item $\tilde{N}$ is canonical and distinguishes all the $\cF$-tangles of $S$;
		\item every node of $N$ is either a star in $\cF$ or home to an $\cF$-tangle.
	\end{itemize}
\end{THM}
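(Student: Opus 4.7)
The plan is to combine the canonical tree-of-tangles construction of Elbracht and Kneip (\cref{thm:Tc}) with the refining machinery developed earlier in this section. Let $\cP$ denote the set of all $\cF$-tangles of $S$. Since $\cF$ is friendly and hence profile-respecting, every element of $\cP$ is a profile of $S$, so \cref{thm:Tc} applies to $\cP$ and yields a canonical nested set $\tilde{N} := N(\vS,\cP) \subseteq S$ that distinguishes $\cP$. This disposes of the first bullet.

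For the second bullet, I would verify that $\tilde{N}$ satisfies the hypothesis of \cref{thm:RefiningToTsWeakAssumptionn}: namely, that the inverse of every separation contained in an inessential node of $\tilde{N}$ is closely related to some profile in $\cP$. But this is exactly the content of \cref{cor:Tciscool}, which was proved using \cref{lem:spiscr} and \cref{lem:Sepsarecr}. Invoking \cref{thm:RefiningToTsWeakAssumptionn} therefore produces a nested set $N \subseteq S$ with $\tilde{N} \subseteq N$ such that each node of $N$ is either a star in $\cF$ or home to an $\cF$-tangle of $S$.

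There is essentially no hard step left: the entire section has been building precisely the two ingredients needed here, and the proof is a one-line assembly. The only point worth flagging is to make sure that the set $\cP$ of $\cF$-tangles is a set of profiles so that \cref{thm:Tc} is applicable, which is immediate from the fact that friendly sets of stars are by definition profile-respecting. Thus the proof reduces to: apply \cref{thm:Tc} to $\cP$ to obtain $\tilde{N}$, apply \cref{cor:Tciscool} to check the hypothesis of \cref{thm:RefiningToTsWeakAssumptionn}, and then apply that theorem to obtain $N$.
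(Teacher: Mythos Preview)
Your proposal is correct and follows essentially the same approach as the paper. The only cosmetic difference is that you invoke the packaged \cref{thm:RefiningToTsWeakAssumptionn}, whereas the paper inlines its proof by applying \cref{lem:reflem} directly to each inessential node of $\tilde{N}$ and taking the union; either way the argument hinges on \cref{cor:Tciscool}.
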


\begin{proof}
	Let $\cP$ be the set of all $\cF$-tangles of $S$, and let $\tilde{N} := N(\vS, \cP)$ be the nested set from \cref{constr:Tc}.
	Further, let $\sigma =: \{\vs_1, ..., \vs_n\}$ be an inessential node of $\tilde{N}$. By \cref{cor:Tciscool} every separation $\sv_i$ is closely related to some profile $P_i \in \cP$. Thus, we can apply \cref{lem:reflem} to $\sigma$ to obtain a nested set~$N_\sigma \subseteq S$ corresponding to an $S$-tree over $\cF \cup \{\{\sv_1\}, ...,\{\sv_n\}\}$ in which each $\vs_i$ appears as a leaf separation. Clearly,~$N_\sigma$ is nested with $N$ and any other such set $N_{\sigma'}$, and thus setting\footnote{In fact, one can show that $\tilde{N}$ has at most one inessential node, and thus $N = \tilde{N} \cup N_\sigma$.} \linebreak $N := \tilde{N} \cup \bigcup \{N_\sigma : \sigma \text{ is an inessential node of } \tilde{N}\}$ yields the desired nested set.
\end{proof}

In particular, if the set $\cF$ of stars is invariant under all automorphisms of $\vS$, then \cref{thm:RefiningACanonicalToT} immediately implies \cref{cor:RefiningACanonicalToT}, which we restate here:

\begin{customthm}{\ref{cor:RefiningACanonicalToT}}
    \emph{Let $\vS$ be a submodular separation system, and let $\cF$ be a friendly set of stars in $\vS$. If $\cF$ is fixed under all automorphisms of $\vS$, then there are nested sets $\tilde{N} \subseteq N \subseteq S$ such that:
	\begin{itemize}
		\item $\tilde{N}$ is fixed under all automorphisms of $\vS$ and distinguishes all the $\cF$-tangles of $S$;
		\item every node of $N$ is either a star in $\cF$ or home to an $\cF$-tangle.
	\end{itemize}} 
\end{customthm}

\begin{proof}
    If $\cF$ is fixed under all automorphisms of $\vS$, then this is also true for the set $\cP$ of all $\cF$-tangles of $S$, i.e.\ $\phi(\cP) = \cP$ for all automorphisms $\phi$ of $\vS$. Applying \cref{thm:RefiningACanonicalToT} to $\vS$ and $\cF$ then yields nested sets $\tilde{N} := N(\vS, \cP) \subseteq N$ with the property that $\phi(N(\vS, \cP)) = N(\vS, \phi(\cP)) = N(\vS, \cP)$ for all such automorphisms $\phi$, and hence $\tilde{N}$ is fixed under them. 
\end{proof}

We conclude this section by remarking that it is not possible to strengthen \cref{thm:RefiningACanonicalToT} and \cref{cor:RefiningACanonicalToT} so that the refinement $N$, too, is canonical (see \cite{JoshRefining} for an example).

\section{A tree of tangles with good separations} \label{sec:ANearlyCanonicalToTWithGoodSeps}

Recall that a separation $s$ in a separation system $S$ is good (for a set $\cP$ of profiles of $S$) if there are two profiles $P \neq P'$ in $\cP$ such that $\vs$ is closely related to $P$ and $\sv$ is closely related to $P'$.
Additionally, we say that a separation $s \in S$ distinguishes two profiles $P$ and $P'$ of $S$ \emph{well} if one of $\vs$ and $\sv$ is closely related to~$P$ and the other one to~$P'$. Note that this implies that $s$ is good for $\cP$ if $P, P' \in \cP$.

 A set of separations is \emph{good for} some set $\cP$ of profiles, if every separation in that set is good for $\cP$. If we refer to a good set without reference to any set of profiles, then it can be assumed that this set is good for the set of all profiles of $S$.

In \cref{sec:RefiningInessStars} we proposed the property of a separation to be `good' as a structural generalization of efficiency to separation systems without an order function. 
Our main goal there was to find a property of separations which makes it possible to refine nested sets in the sense of \cref{lem:reflem}, and which can be formulated purely in terms of $\vS$ and $\cF$. While it is clear by \cref{prop:eff} that every separation which distinguishes some pair of profiles efficiently also distinguishes that pair well, it turned out that the converse is also true, at least for separation systems which come from a graph. More precisely, if~$G$ is a graph and~$S_k$ the system of all separations of~$G$ of order~$<k$, then it holds for every pair of regular profiles $P, P'$ of~$S_k$ that a separation in~$S_k$ distinguishes $P$ and $P'$ well if and only if it distinguishes them efficiently. A proof can be found in the extended version of this paper \cite{SARefiningInessPartsExtended}. 

Hence, the property of a separation to be good is not just some arbitrary generalization of efficiently distinguishing two profiles to separation systems without an order function, it in fact coincides in graphs with the notion of efficiency. Therefore, it seems to be a particularly natural generalization.
Since every separation system of the form $S_k$ contains a nested set which distinguishes all its profiles efficiently \cite{ProfilesNew}*{Theorem 3.6}, it seems natural to ask whether abstract separation systems always contain a nested set which distinguishes all its profiles well.

While this is not true in general \cite{SARefiningInessPartsExtended}, we will show a slightly weaker statement: that every submodular separation system contains a nested set of good separations which distinguishes all its profiles.
This nested set will be canonical but in a slightly weaker sense than usual. 
We show that this is best possible in the following sense: in \cref{ex:Counterexample_CanonicalGoodToT} we give an example of a submodular separation system and a set of profiles which does not admit a nested set of good separations that is canonical in the usual (stronger) sense.
\medskip

For the remainder of this section, let $\vS$ be a submodular separation system, and let $\cP$ be a set of profiles of $S$.
Our strategy for constructing a nested set that distinguishes $\cP$ and contains only separations that are good for $\cP$ will be as follows. We will first show that every pair of profiles is well distinguished by some separation in $S$. This then implies that there are enough good separations so that we can consider, similar as to \cref{constr:Tc}, the exclusive good separations in $\vS$ and use them to build a nested set that distinguishes already a subset of $\cP$, which we can then discard from $\cP$. In the next step, we then only consider the remaining profiles and those separations in $S$ which are nested with the separations we have chosen so far. Iterating this process will then yield the desired nested set.

Here, the main difference to \cref{constr:Tc} is that we only consider those exclusive separations that are maximal among all separations which are good for $\cP$. As these separations behave quite differently than the exclusive separations which are maximal among all separations in $\vS$, the proof of \cref{thm:ACanonicalGoodToT} will need different and additional arguments than the one showing that \cref{constr:Tc} works even though the constructions themselves are quite similar.
\medskip
 
During the construction, we need to make sure that every separation which we choose to include in our nested set $N$ of good separations is nested with all previously chosen separations. 
For this, throughout this section, let $M \subseteq S$ be a nested set of good separations such that $P \cap \vM = P' \cap \vM$ for all $P, P' \in \cP$. We set $S_M := \{s \in S : s \text{ is nested with } M\}$, and for every profile $P$ of $S$ we let $P_M := P \cap \vS_M$.

We first show that every pair of profiles in $\cP$ can be well distinguished by a separation which is nested with $M$. For this, we need the following fact about submodular separation systems.
A separation system $\vS$ inside some universe is called \emph{separable} if, for every pair of separations $\vs \leq \vr \in \vS$, there exists a separation $\vt \in \vS$ with $\vs \leq \vt \leq \vr$ such that $\vt$ emulates $\vs$ and $\tv$ emulates $\rv$.

\begin{LEM}{\cite{AbstractTangles}*{Lemma 13}} \label{lem:SubmodularImpliesSeparabel}
    Every submodular separation system is separable.
\end{LEM}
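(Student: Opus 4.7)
The plan is to show the existence of $\vt \in \vS$ with $\vs \leq \vt \leq \vr$ satisfying both emulation conditions by a maximality argument. The starting observation is that each endpoint of the interval $[\vs, \vr] \cap \vS$ handles one side trivially: $\vs$ itself emulates $\vs$ in $\vS$ because $\vs \vee \vx = \vx \in \vS$ for every $\vx \in \vS$ with $\vx \geq \vs$, and dually $\rv$ emulates $\rv$. The task is to find a separation between the two endpoints that handles both sides simultaneously.

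The concrete plan is to apply Zorn's lemma to the set
\[
\mathcal{A} := \{\vt \in \vS : \vs \leq \vt \leq \vr \text{ and } \vt \text{ emulates } \vs \text{ in } \vS\},
\]
which is nonempty since $\vs \in \mathcal{A}$. First I would verify that chains in $\mathcal{A}$ have upper bounds: given a chain, one uses submodularity (applied pairwise along the chain and stitched together) to argue that its supremum still lies in $\vS$ and inherits the emulation property of $\vs$. Pick a maximal element $\vt \in \mathcal{A}$. The claim is that $\tv$ then emulates $\rv$ in $\vS$.

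To establish this claim, suppose for a contradiction that $\tv$ fails to emulate $\rv$: there is $\vy \in \vS$ with $\vy \geq \rv$, $\vy \neq \vr$, and $\tv \vee \vy \notin \vS$. Submodularity applied to $\tv,\vy \in \vS$ yields $\tv \wedge \vy \in \vS$, that is, $\vt \vee \yv \in \vS$. Set $\vt' := \vt \vee \yv$; then $\vs \leq \vt \leq \vt' \leq \vr$ since $\yv \leq \vr$, and $\vt' > \vt$ (if instead $\yv \leq \vt \leq \vr$, one derives a contradiction from the failure witness $\vy$ and the fact that $\vy \neq \vr$). It then remains to check that $\vt'$ still emulates $\vs$, which would contradict maximality of $\vt$ in $\mathcal{A}$.

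The hardest step will be this last verification, since emulation is \emph{not} monotone in $\vt$ — enlarging $\vt$ can in principle break some joins $\vt \vee \vx$. For $\vx \in \vS$ with $\vx \geq \vs$ and $\vx \neq \sv$ one writes $\vt' \vee \vx = (\vt \vee \vx) \vee \yv$ and knows $\vt \vee \vx \in \vS$ from $\vt \in \mathcal{A}$; the goal is to deduce $(\vt \vee \vx) \vee \yv \in \vS$. Assuming the contrary, submodularity gives $(\vt \vee \vx) \wedge \yv \in \vS$, and the plan is to exploit this by using it, together with the original $\vy$ witness and the relative position of $\vx$ (above $\vs$) versus $\yv$ (below $\vr$), to produce either a new failure of emulation at $\vt$ (contradicting $\vt \in \mathcal{A}$) or a direct contradiction with $\tv \vee \vy \notin \vS$. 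Navigating this non-monotonicity by careful, repeated application of submodularity inside the interval $[\vs, \vr]$ is the main obstacle of the proof.
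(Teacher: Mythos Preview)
The paper does not prove this lemma; it is simply quoted with a citation to an external source, so there is no in-paper argument to compare your attempt against.

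On the merits of your proposal: the overall maximality strategy---pick $\vt \in [\vs,\vr]\cap\vS$ maximal among those emulating $\vs$, then argue that $\tv$ must emulate $\rv$---is natural and in line with how such statements are proved in the literature. However, what you have written is not a proof but a plan with an explicitly acknowledged gap. You correctly identify that the crux is verifying that $\vt' := \vt \vee \yv$ still emulates $\vs$, and you yourself call this ``the main obstacle of the proof''; what follows is only a description of what you would \emph{try} (apply submodularity to $\vt\vee\vx$ and $\yv$, and in the bad case ``exploit'' the resulting corner) rather than an argument that actually closes. In that bad case submodularity only hands you $(\vt\vee\vx)\wedge\yv \in \vS$, and you provide no concrete mechanism linking this back either to a failure of $\vt$ emulating $\vs$ or to the original witness $\vy$. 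Until that step is supplied, the maximality contradiction does not go through.

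A secondary issue: invoking Zorn's lemma requires chains in $\mathcal{A}$ to have upper bounds in $\mathcal{A}$, and your justification (``submodularity applied pairwise along the chain and stitched together'') does not establish this---submodularity is a two-element condition and says nothing about infinite suprema, which need not even exist in the ambient universe $\vU$. In the finite setting (which is the one relevant here) this is harmless, since one can simply pick a maximal element; but then Zorn is unnecessary and you should say so.
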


We can now use this fact to find good separations `between' two nested separations which distinguish some pair of profiles and are each closely related to one profile in that pair.

\begin{PROP}\label{prop:GoodSepsBetweenCloselyRelatedSeps}
    Let two profiles $P \neq P' \in \cP$ be given, and let $\vs \leq \vs' \in \vS$ be two separations that distinguish $P$ and $P'$. If $\sv$ is closely related to $P$ and $\vs'$ is closely related to $P'$, then there exists a separation $\vr \in \vS$ with $\vs \leq \vr \leq \vs'$ that distinguishes $P$ and $P'$ well.
    
    Moreover, if $s, s'$ are nested with $M$, then~$r$ can be chosen to be nested with $M$ as well.
\end{PROP}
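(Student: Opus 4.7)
The plan is to apply \cref{lem:SubmodularImpliesSeparabel} (separability of submodular separation systems) to the interval $[\vs,\vs']$ in $\vS$ to obtain a candidate separation $\vr$, and then to upgrade the given close-relations of $\sv$ to $P$ and of $\vs'$ to $P'$ to close-relations of $\rv$ and $\vr$ via \cref{prop:ShowingThatASepIsCloselyRelated}. For the moreover part, I plan to carry out the same argument inside the submodular sub-system $\vS_M$.

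Explicitly, \cref{lem:SubmodularImpliesSeparabel} provides $\vr \in \vS$ with $\vs \leq \vr \leq \vs'$ such that $\vr$ emulates $\vs$ in $\vS$ and $\rv$ emulates $\sv'$ in $\vS$. To verify that $\vr$ is closely related to $P'$, I would apply \cref{prop:ShowingThatASepIsCloselyRelated} with $\vs'$: since $\vr \leq \vs'$ and $\vs'$ is closely related to $P'$, it suffices to show $\vr \wedge \vu \in \vS$ for every $\vu \leq \vs'$. Applying the involution, this is equivalent to $\rv \vee \uv \in \vS$ whenever $\uv \geq \sv'$, which is exactly what the emulation of $\sv'$ by $\rv$ guarantees (the edge case $\vu = \sv'$ would force $\vs'$ to be co-small, impossible since $\vs' \in P'$ and the profiles we consider are regular). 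Symmetrically, applying \cref{prop:ShowingThatASepIsCloselyRelated} to $\sv$ and using that $\vr$ emulates $\vs$ shows that $\rv$ is closely related to $P$. Hence $\vr$ distinguishes $P$ and $P'$ well.

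For the moreover, the key observation is that $\vS_M$ is itself a submodular separation system: by \cref{lem:Fischlemma}, any corner of two elements of $\vS_M$ that lies in $\vS$ also lies in $\vS_M$, so submodularity of $\vS$ restricts to $\vS_M$. The restrictions $P_M := P \cap \vS_M$ and $P'_M := P' \cap \vS_M$ are profiles of $S_M$ distinguished by $\vs$, and $\sv, \vs'$ are closely related to $P_M, P'_M$ in $\vS_M$ (using \cref{lem:Fischlemma} again to keep the relevant meets inside $\vS_M$). Applying the first part of the proposition inside $\vS_M$ then produces $\vr \in \vS_M$ with $\vs \leq \vr \leq \vs'$ that distinguishes $P_M$ and $P'_M$ well in $\vS_M$.

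The delicate step, and the main obstacle I anticipate, is lifting these $\vS_M$-level close-relations back to $\vS$: for $\vq \in P'_M$ we already have $\vr \wedge \vq \in \vS_M \subseteq \vS$, but for $\vq \in P' \setminus \vS_M$ a separate argument is needed. The plan is to invoke \cref{prop:ShowingThatASepIsCloselyRelated} once more in $\vS$, reducing the question to showing $\vr \wedge \vu \in \vS$ for all $\vu \leq \vs'$; the $\vu \in \vS_M$ case is immediate from the $\vS_M$-close-relation (via consistency, such a $\vu$ lies in $P'_M$), and to close the remaining case $\vu \notin \vS_M$ I would exploit the goodness of the separations in $M$---that their orientations in $P \cap \vM = P' \cap \vM$ are closely related to profiles in $\cP$---together with submodularity of $\vS$ to force the required meet into $\vS$.
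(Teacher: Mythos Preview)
Your argument for the first assertion is correct and essentially identical to the paper's: apply separability (\cref{lem:SubmodularImpliesSeparabel}) to obtain $\vr$ with the two emulation properties, then feed these into \cref{prop:ShowingThatASepIsCloselyRelated} to upgrade the close-relations of $\sv$ and $\vs'$ to $\rv$ and $\vr$.

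For the `moreover' part, however, your approach diverges from the paper's and is incomplete. You propose to run the first part inside the subsystem $\vS_M$ and then lift the resulting close-relations back to $\vS$. You yourself flag the lifting as ``the delicate step'', and indeed it is the real gap: once you have that $\vr$ is closely related to $P'_M$ inside $\vS_M$, you still need $\vr \wedge \vu \in \vS$ for every $\vu \leq \vs'$ with $\vu \notin \vS_M$, and your sketch for this case (``exploit the goodness of the separations in $M$ \ldots\ together with submodularity'') is not an argument. There is also a smaller earlier issue: your claim that $\sv$ and $\vs'$ remain closely related to $P_M$ and $P'_M$ inside $\vS_M$ appeals to \cref{lem:Fischlemma}, but that lemma assumes the two separations cross; you would need to handle the nested case separately.

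The paper avoids the lifting problem altogether by a different device: it works in $\vS$ throughout and runs a minimality argument. One picks $\vt \leq \vt'$ in $\vS_M$ with $\tv$ closely related to $P$ and $\vt'$ closely related to $P'$, minimising the size of the interval $\{x \in S : \vt \leq \vx \leq \vt'\}$. The first part (applied in $\vS$) then yields a well-distinguishing $\vr$ between $\vt$ and $\vt'$; if $\vr = \vt$ we are done, and otherwise one takes a maximal $\vr'$ in $[\vt,\vt']$ with $\rv'$ closely related to $P$, argues it is not nested with $M$ (else the interval could be shrunk), and then uses the goodness of the separations in $M$ together with \cref{prop:InfimaOfClRelSetsAreClRel} to build an element of $\vS_M$ that contradicts the minimality of the interval. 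The goodness of $M$ enters concretely here, via the profiles $Q_m$ to which the $\mv$ are closely related, rather than as a vague resource.
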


\begin{proof}
    By \cref{lem:SubmodularImpliesSeparabel} there exists a separation $\vr \in \vS$ with $\vs \leq \vr \leq \vs'$ such that $\vr$ emulates $\vs$ and $\rv$ emulates $\sv'$. Since $\rv$ emulates $\sv'$, we have $\rv \vee \tv \in \vS$ and hence $(\rv \vee \tv)^* = \vr \wedge \vt \in \vS$ for all $\vt \leq \vs'$. 
    By \cref{prop:ShowingThatASepIsCloselyRelated} it follows that $\vr$ is closely related to $P'$ since $\vs'$ is closely related to $P'$. Analogously, we find that $\rv$ is closely related to $P$. 

    For the `moreover'-part let $\vt \leq \vt' \in \vS_M$ be two separations that minimize $\abs{\{x \in S : \vt \leq \vx \leq \vt'\}}$ under all separations in $S_M$ which distinguish $P$ and $P'$ and which have the property that $\tv$ is closely related to~$P$ and $\vt'$ is closely related to $P'$.
    For this note that $\vs, \vs'$ are candidates for $\vt, \vt'$. 

    By the first part, there exists a separation $r \in S$ with $\vt \leq \vr \leq \vt'$ which distinguishes $P$ and $P'$ well. If $\vt = \vr$, then $\vr$ is as desired since $t$ is nested with $M$, so we may assume that $\vt < \vr$.  Let $\vr' \in \vS$ be any maximal separation whose inverse $\rv'$ is closely related to $P$ and which satisfies $\vt \leq \vr' \leq \vt'$. By the argument before, we have $\vt < \vr'$.
    Then $r'$ cannot be nested with $M$; otherwise $\vr', \vt'$ would have been a better choice for $\vt, \vt'$. Moreover, $r'$ distinguishes $P$ and $P'$ well. Indeed, by the first part, there is a separation $r'' \in S$ with $\vr' \leq \vr'' \leq \vt'$ that distinguishes $P$ and $P'$ well, which by the maximal choice of $\vr'$ implies that $\vr' = \vr''$.

    Let $M' \subseteq M$ be the set of all separations in $M$ that cross $r'$, and denote the orientation of every $m \in M'$ which is contained in $P$ with $\vm$. Since every separation in $M$ distinguishes two profiles of $S$ well, there exists for every $m \in M'$ some profile~$Q_m$ of $S$ such that $\mv$ is closely related to~$Q_m$. Now first suppose that there is a separation $m \in M'$ such that $\rv' \in Q_m$. Then $\rv' \wedge \mv$ is an element of $\vS$ and closely related to~$P$ by \cref{prop:InfimaOfClRelSetsAreClRel}. But $\vr' < (\rv' \wedge \mv)^* \leq \vt'$, which contradicts the choice of $\vr'$.
    Therefore, $\vr' \in Q_m$ for every $m \in M$.
    Then $\vr'' := \vr' \wedge \bigwedge_{m \in M} \mv$ is closely related to~$P'$ by \cref{prop:InfimaOfClRelSetsAreClRel} as $\vr'$ is closely related to $P'$. But since $\vr'' < \vr' \leq \vt'$, this contradicts the choice of $\vt, \vt'$ as $\vt, \vr''$ would have been a better choice and thus concludes the proof.
\end{proof}

In particular, \cref{prop:GoodSepsBetweenCloselyRelatedSeps} implies that every pair of distinct profiles can be well distinguished:

\begin{COR}\label{cor:AllProfilesAreWellDistinguished}
    For every pair of distinct profiles in $\cP$, there is a separation in $S_M$ that distinguishes them well. 
\end{COR}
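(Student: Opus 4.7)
The plan is to produce separations $\vs \leq \vs'$ in $\vS_M$ with $\sv \in P$ closely related to $P$ and $\vs' \in P'$ closely related to $P'$, both distinguishing $P$ and $P'$, and then invoke the moreover part of \cref{prop:GoodSepsBetweenCloselyRelatedSeps} to obtain the required separation $r \in \vS_M$.

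First I would show that $P$ and $P'$ are distinguished by some separation in $\vS_M$; equivalently, that the profiles $P_M := P \cap \vS_M$ and $P'_M := P' \cap \vS_M$ of the (also submodular) separation system $S_M$ are distinct. Suppose for a contradiction that $P_M = P'_M$, and choose a separation $s \in S$ distinguishing $P, P'$ (with $\vs \in P$, $\sv \in P'$) that minimizes the number of separations in $M$ crossing it. Since $s \notin S_M$, some $m \in M$ crosses $s$; by the hypothesis on $M$ there is a common orientation $\vm \in P \cap P'$. The goodness of $m$ forces one of the corners $\vs \wedge \vm, \sv \wedge \vm$ to lie in $\vS$, and consistency pushes any such corner's orientation out of $P'$ or $P$ (since, e.g., $\vs \wedge \vm \leq \vs \notin P'$). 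Analyzing the corner's orientation in the other profile, the favourable subcase yields a corner that distinguishes $P, P'$ and, by \cref{lem:Fischlemma}, crosses strictly fewer elements of $M$ than $s$, contradicting the minimality of $s$; a residual subcase (where all straightforward corners of $s$ with $m$ land in $P \cap P'$) must be treated by inspecting other corners or by iterating the argument on a different $m' \in M$ crossing $s$.

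Given that $P_M \neq P'_M$, let $\vs'$ be a maximal element of $P'_M$. By \cref{lem:NestedMaxSepsAreClRel} applied with $Y := \vM \cap P'$ — whose inverses are each closely related to some profile because $M$ is good — $\vs'$ is closely related to $P'$ in $\vS$. Moreover $\vs' \in P' \setminus P$: otherwise $\vs' \in P$, and then consistency forces every $\vx \in P'_M$ (being $\leq \vs'$) into $P$, so $P'_M \subseteq P_M$ and hence $P_M = P'_M$, contradicting Step~1. Symmetrically, let $\sv$ be a maximal element of the non-empty set $\{\vy \in P_M : \vy \geq \sv'\}$ (it contains $\sv' \in P$); an adaptation of \cref{lem:NestedMaxSepsAreClRel} with the additional constraint $\vy \geq \sv'$, which carries through the proof without essential change since any would-be contradicting separation $\sv \vee \vr$ still satisfies $\sv \vee \vr \geq \sv \geq \sv'$, shows that $\sv$ is closely related to $P$, with $\vs \leq \vs'$ by construction.

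Applying the moreover part of \cref{prop:GoodSepsBetweenCloselyRelatedSeps} to $\vs \leq \vs'$ in $\vS_M$ now yields a separation $r \in \vS_M$ with $\vs \leq \vr \leq \vs'$ that distinguishes $P$ and $P'$ well. The hard part will be Step~1: ruling out $P_M = P'_M$ for distinct profiles in $\cP$ requires a careful case analysis of corner separations exploiting both the submodularity of $\vS$ and the goodness of $M$, and the subcase where no single corner of $s$ and $m$ distinguishes has to be handled by either passing to a further corner or by refining the minimality argument on the $M$-crossing count.
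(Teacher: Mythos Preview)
Your overall plan matches the paper's: show some $s\in S_M$ distinguishes $P,P'$; pick maximal elements of $P_M$ and $P'_M$ to get separations closely related to $P$ and $P'$; invoke the moreover part of \cref{prop:GoodSepsBetweenCloselyRelatedSeps}. But both Steps~1 and~2 of your execution have genuine gaps.

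\textbf{Step 1.} You look at the corners $\vs\wedge\vm$ and $\sv\wedge\vm$, where $\vm$ is the orientation of $m$ lying in $P\cap P'$. Both of these are $\leq\vm$, so by consistency both lie in $P\cap P'$: they can never distinguish $P$ from $P'$. Your claim that ``$\vs\wedge\vm\leq\vs\notin P'$ pushes the corner out of $P'$'' is the wrong direction for consistency. The fix (which is what the paper does) is to use $\mv$, not $\vm$: goodness of $m$ gives a profile $Q$ to which $\mv$ is closely related; by symmetry between $P$ and $P'$ you may assume $\vs\in Q$, so $\mv\wedge\vs\in\vS$. Now $\mv\wedge\vs\leq\vs\in P$ puts this corner in $P$, while $\vm,\sv\in P'$ and the profile axiom give $(\mv\wedge\vs)^*=\vm\vee\sv\in P'$. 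Thus the corner distinguishes $P,P'$ and by \cref{lem:Fischlemma} crosses strictly fewer elements of $M$. Your ``residual subcase'' is the whole case, and iterating on another $m'\in M$ does not help.

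\textbf{Step 2.} Your argument that a maximal element $\vs'$ of $P'_M$ must lie outside $P$ is incorrect: the parenthetical ``every $\vx\in P'_M$ being $\leq\vs'$'' is false, since $P'_M$ can have several incomparable maximal elements, some possibly in $P$. The paper sidesteps this by first taking $\vs$ maximal in $P_M$ above the distinguishing separation found in Step~1 (so $\vs$ still distinguishes, by consistency in $P'$), and then $\vs'$ maximal in $P'_M$ with $\vs'\geq\sv$. Both are then genuinely maximal in $P_M$ and $P'_M$ respectively, so \cref{lem:NestedMaxSepsAreClRel} applies directly to each; no adaptation of the lemma is needed.
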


\begin{proof}
Let $P, P' \in \cP$ be two distinct profiles. We first show that there is a separation in $S_M$ which distinguishes them (but not necessarily well). Since $P$ and $P'$ are distinct, there exists a separation $\vs \in P$ that distinguishes them; we take one which is nested with as many separations in $M$ as possible. Now suppose for a contradiction that $s$ crosses some separation $m \in M$, and let $\vm \in P$. By the assumption on~$M$, there is a profile $Q$ of $S$ to which $\mv$ is closely related; by symmetry we may assume that $\vs \in Q$. Since $\mv$ is closely related to $Q$, it follows that $\mv \wedge \vs \in \vS$. But since $P \cap \vM = P' \cap \vM$ and therefore $\vm \in P'$, it follows that $\mv \wedge \vs = \vm \vee \sv \in P'$ as $P'$ is a profile. Hence, $\vm \wedge \vs$ distinguishes $P$ and $P'$, too, which contradicts the choice of $s$ since $\mv \wedge \vs$ is nested with one separation in $M$ more than $s$ by \cref{lem:Fischlemma}.

So let $\vs \in P_M$ be a maximal separation that distinguishes $P$ and $P'$, and let $\vs' \geq \sv$ be maximal in $P'_M$.
Then $\vs$ and $\vs'$ are closely related to $P$ and $P'$, respectively, by \cref{lem:NestedMaxSepsAreClRel}. Hence, by \cref{prop:GoodSepsBetweenCloselyRelatedSeps}, there exists a separation $r \in S_M$ with $\sv \leq \vr \leq \vs'$ that distinguishes $P$ and $P'$ well.
\end{proof}

Before we start with the construction of our desired nested set, we first show the following technical lemma which we will need throughout this section to find good separations:

\begin{LEM}\label{lem:GreaterGoodSeparations}
    Let $P, Q, Q' \in \cP$ be three distinct profiles, and let $\vr, \vs \in P_M$ be two distinct separations such that $\rv, \sv \in Q'$. If $\rv$ is closely related to $Q$ and $\sv$ is closely related to $Q'$, then there is a separation $\vu \in P_M$ with $\vr \vee \vs \leq \vu$ that distinguishes $P$ and $Q$ well.
\end{LEM}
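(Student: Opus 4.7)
The plan is to build $\vu$ in two moves: first shift up from $\vr \vee \vs$ to a maximal element $\vu_0$ of $P_M$ (which will be closely related to $P$ by \cref{lem:NestedMaxSepsAreClRel}), and then apply \cref{prop:GoodSepsBetweenCloselyRelatedSeps} to the pair $\uv_0 \leq \rv \wedge \sv$ in order to get a separation whose orientation above $\vr \vee \vs$ is closely related to both sides. The key auxiliary fact, which is where the hypothesis on $Q'$ is used, is that the corner $\rv \wedge \sv$ itself is closely related to $Q$.

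First I would check that $\vr \vee \vs \in P_M$. Since $\sv$ is closely related to $Q'$ and $\rv \in Q'$, the meet $\sv \wedge \rv = \rv \wedge \sv$ lies in $\vS$, hence so does $\vr \vee \vs$; the profile property of $P$ applied to $\vr, \vs \in P$ then gives $\vr \vee \vs \in P$, and nestedness with $M$ follows from \cref{lem:Fischlemma} in the crossing case and by direct inspection otherwise. Consequently I may pick $\vu_0 \in P_M$ maximal with $\vu_0 \geq \vr \vee \vs$; any strictly larger element of $P_M$ would still lie above $\vr \vee \vs$, so $\vu_0$ is maximal in $P_M$ as a whole. Applying \cref{lem:NestedMaxSepsAreClRel} with $Y$ the $P$-orientations of separations in $M$ (whose inverses are closely related to profiles because $M$ is good for $\cP$) shows that $\vu_0$ is closely related to $P$; and consistency of $Q$ with $\rv \in Q$ and $\vr \leq \vu_0$ forces $\uv_0 \in Q$.

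The most delicate step is to show that $\rv \wedge \sv$ is closely related to $Q$. I would invoke \cref{prop:ShowingThatASepIsCloselyRelated}: since $\rv$ is closely related to $Q$ and $\rv \wedge \sv \leq \rv$, it suffices to verify that $(\rv \wedge \sv) \wedge \vw \in \vS$ for every $\vw \leq \rv$. For such $\vw$, consistency of $Q$ and of $Q'$ (both of which contain $\rv$) forces $\vw \in Q \cap Q'$; closeness of $\rv$ to $Q$ gives $\rv \wedge \vw \in \vS$, consistency of $Q'$ then gives $\rv \wedge \vw \in Q'$, and closeness of $\sv$ to $Q'$ finally yields $(\rv \wedge \sv) \wedge \vw = \sv \wedge (\rv \wedge \vw) \in \vS$, as required. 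This is where the full strength of the hypothesis ``$\sv$ is closely related to $Q'$'' (rather than merely $\sv \in Q'$) is used, and where the role of the third profile $Q'$ becomes essential.

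Finally I would apply \cref{prop:GoodSepsBetweenCloselyRelatedSeps}, including its moreover-clause, to the pair $\uv_0 \leq \rv \wedge \sv$ for the profiles $P$ and $Q$: the inverse $\vu_0$ of the lower separation is closely related to $P$, $\rv \wedge \sv$ is closely related to $Q$ by the previous paragraph, the two separations distinguish $P$ from $Q$, and both are nested with $M$. This produces some $\vt \in \vS_M$ with $\uv_0 \leq \vt \leq \rv \wedge \sv$ that distinguishes $P$ and $Q$ well; consistency with $\vt \leq \rv \wedge \sv \in Q$ then forces $\vt \in Q$, so $\vt$ is closely related to $Q$ and $\tv$ to $P$. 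Setting $\vu := \tv$ gives an element of $P_M$ with $\vu \geq (\rv \wedge \sv)^* = \vr \vee \vs$ that distinguishes $P$ and $Q$ well, as desired.
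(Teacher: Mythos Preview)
Your proof is correct and follows essentially the same route as the paper's: show $\rv \wedge \sv \in \vS_M$, verify via \cref{prop:ShowingThatASepIsCloselyRelated} that $\rv \wedge \sv$ is closely related to $Q$, pick a maximal element of $P_M$ above $\vr \vee \vs$ (closely related to $P$ by \cref{lem:NestedMaxSepsAreClRel}), and then invoke \cref{prop:GoodSepsBetweenCloselyRelatedSeps}. The only cosmetic differences are the order of the steps and that you apply \cref{prop:GoodSepsBetweenCloselyRelatedSeps} to the inverted pair $\uv_0 \leq \rv \wedge \sv$ rather than to $\vr \vee \vs \leq \vu_0$; also, in your verification of $(\rv \wedge \sv) \wedge \vw \in \vS$ the intermediate step ``closeness of $\rv$ to $Q$ gives $\rv \wedge \vw \in \vS$'' is redundant since $\rv \wedge \vw = \vw$.
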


\begin{proof}
    Since $\sv$ is closely related to $Q'$ and $\rv \in Q'$, it holds that $\rv \wedge \sv \in \vS$. Moreover, $\rv \wedge \sv \in \vS_M$ by \cref{lem:Fischlemma}.
    Now suppose that $\rv \wedge \sv$ is closely related to $Q$. Then the assertion follows. Since $P$ is a profile, $(\rv \wedge \sv)^* = \vr \vee \vs \in P$. Now let $\vw \in P_M$ be maximal with $\vr \vee \vs \leq \vw$. Then $\vw$ is closely related to $P$ by \cref{lem:NestedMaxSepsAreClRel}. It follows, by \cref{prop:GoodSepsBetweenCloselyRelatedSeps}, that there exists a separation $\vu \in \vS_M$ with $\vr \vee \vs \leq \vu \leq \vw$ that distinguishes~$P$ and~$Q$ well.
    
    Therefore, it suffices to show that $\rv \wedge \sv$ is closely related to $Q$.
    For this, it is by \cref{prop:ShowingThatASepIsCloselyRelated} enough to check that $(\rv \wedge \sv) \wedge \vt \in \vS$ for every separation $\vt \leq \rv \in Q$ since $\rv$ is closely related to $Q$ by assumption. So let $\vt \leq \rv$ be given. It follows that $\vt \in Q'$ by the consistency of $Q'$ as $\rv \in Q'$. Then, $(\rv \wedge \sv) \wedge \vt = \sv \wedge (\rv \wedge \vt) = \sv \wedge \vt \in \vS$ because $\sv$ is closely related to $Q'$. This completes the proof.
\end{proof}

We can now start with the construction of the desired nested set.
For this, let $E_P$ be the set of all separations in $P_M$ that are exclusive for $\cP$ and distinguish $P$ well from some other profile in $\cP$. We will pick, for every profile $P \in \cP$ for which $E_P$ is non-empty, a maximal separation in $E_P$. The set $N'$ of all these separations will then have two properties: on the one hand, it is nested, and on the other hand, the system of all those separations which are nested with $N'$ is still rich enough to distinguish all remaining profiles in~$\cP$.
This then allows us to continue the construction inductively.

To this end, we first show that $E_P$ is not empty for all profiles in $\cP$.

\begin{LEM}\label{lem:SomeEPIsNonEmpty}
    If $S$ and $\cP$ are non-empty, then $E_P$ is non-empty for some profile $P \in \cP$.
\end{LEM}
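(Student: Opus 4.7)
The plan is to argue by contradiction: assuming $E_P = \emptyset$ for every $P \in \cP$, I would derive a contradiction via a maximality argument. Assume $\abs{\cP} \geq 2$ (otherwise $E_P$ is necessarily empty, so the lemma must implicitly assume at least two profiles).

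For each profile $P \in \cP$, define
\[A_P := \{\vx \in P_M : \vx \text{ is closely related to } P \text{ and } \xv \text{ is closely related to some } Q \in \cP \setminus \{P\}\},\]
so that $E_P \subseteq A_P$. By \cref{cor:AllProfilesAreWellDistinguished}, every pair of distinct profiles is distinguished well by some separation in $\vS_M$, and hence each $A_P$ is non-empty. Applying Zorn's lemma --- with \cref{lem:NestedMaxSepsAreClRel} (for $Y$ equal to the $P$-orientations of $M$) providing upper bounds of chains --- I would choose $(P, \vs)$ so that $\vs \in A_P$ is maximal in the $\vS$-order over all $P' \in \cP$. Let $Q \neq P$ be a profile with $\sv$ closely related to $Q$.

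The key claim is that $\vs$ is exclusive for $\cP$, which places $\vs$ in $E_P$. Supposing otherwise, $\vs \in P_2$ for some $P_2$ with $P_2 \neq P, Q$. Applying \cref{cor:AllProfilesAreWellDistinguished} to $P_2$ and $Q$ gives $\vt \in \vS_M$ with $\vt \in P_2$ closely related to $P_2$ and $\tv \in Q$ closely related to $Q$. Since $\sv, \tv \in Q$ are both closely related to $Q$, \cref{prop:InfimaOfClRelSetsAreClRel} yields $\sv \wedge \tv \in \vS$, closely related to $Q$, and hence $\vs \vee \vt \in \vS$. The consistency of $P_2$ (which contains both $\vs$ and $\tv$) rules out $\vt < \vs$, so $\vs \vee \vt > \vs$ strictly; the degenerate case $\vt = \vs$ (which would force $\vs$ to be closely related to $P_2$ as well) is handled by instead choosing $\vt$ to distinguish $P$ and $P_2$ well.

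The final step is to apply \cref{lem:GreaterGoodSeparations} with $P, P_2, Q$ in the roles of $P, Q, Q'$ to produce $\vu \in P_M$ with $\vs \vee \vt \leq \vu$ distinguishing $P$ and $P_2$ well; then $\vu \in A_P$ with $\vu > \vs$, contradicting the maximality of $\vs$. The main obstacle is a case analysis to verify the hypotheses of \cref{lem:GreaterGoodSeparations}, which require two separations in $P_M$ with inverses in a common third profile and appropriate close-relatedness: this matches the data cleanly when $\vt \in P$, but the sub-case $\tv \in P$ requires an auxiliary separation distinguishing $P$ and $P_2$ well, combined with $\vs$ and $\vt$ via \cref{prop:InfimaOfClRelSetsAreClRel} and \cref{lem:NestedMaxSepsAreClRel}, to reduce to the first sub-case.
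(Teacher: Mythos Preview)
Your overall strategy---an extremal choice plus \cref{lem:GreaterGoodSeparations}---matches the paper's, but the execution has a genuine gap even in the case you call ``clean.'' You take $\vt$ to distinguish $P_2$ and $Q$ well, so $\vt$ is closely related to $P_2$ and $\tv$ is closely related to $Q$. To invoke \cref{lem:GreaterGoodSeparations} with $P, P_2, Q$ as $P, Q, Q'$, you need one of the two inverses $\tv, \sv$ to be closely related to the middle profile $P_2$. But $\tv\notin P_2$ (since $\vt\in P_2$) and $\sv\notin P_2$ (since $\vs\in P_2$), so neither can be closely related to $P_2$, and the lemma does not apply. Swapping the roles of $\vt$ and $\vs$ or putting $P_2$ in the first slot runs into the same obstruction: all the close-relatedness you have on the inverse side points to $Q$, never to $P_2$. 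Thus even before you reach the acknowledged ``bad'' sub-case $\tv\in P$, the argument already fails.

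The paper avoids this by a different extremal principle: choose a good $\vs\in\vS_M$ minimising the \emph{number of profiles} containing it. If $\vs\in P\cap Q$, pick $\vr\in P_M$ distinguishing $P$ and $Q$ well, and let $Q'$ be a profile with $\sv$ closely related to $Q'$. Because $P$ and $Q$ play symmetric roles (both just contain $\vs$), one may assume $\rv\in Q'$ by swapping $P\leftrightarrow Q$ if necessary; now \cref{lem:GreaterGoodSeparations} applies directly to $P,Q,Q'$ and $\vr,\vs$, yielding a good $\vu\ge\vs$ with $\vu\notin Q$. This kills the case analysis entirely. Your maximality scheme can be salvaged: take $\vt$ to distinguish $P$ and $P_2$ well (not $P_2$ and $Q$), then split on how $Q$ orients $t$ and apply \cref{lem:GreaterGoodSeparations} with $P$ or $P_2$ in the first slot accordingly---but the paper's counting argument is both shorter and cleaner. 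A minor side remark: invoking Zorn with \cref{lem:NestedMaxSepsAreClRel} ``providing upper bounds of chains'' is not what that lemma does; in this (implicitly finite) setting you can simply take maximal elements directly.
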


\begin{proof}
    Let $\vs \in \vS_M$ be a good separation such that the number of profiles $P \in \cP$ with $\vs \in P$ is as small as possible. We claim that $\vs$ is exclusive for some profile $P \in \cP$. It then follows that $\vs \in E_P$ since $s$ was good for $\cP$ and thus $s$ distinguishes $P$ well from some other profile. 
    
    To show this claim, suppose for a contradiction that there are distinct profiles $P, Q \in \cP$ with $\vs \in P \cap Q$. By \cref{cor:AllProfilesAreWellDistinguished}, there exists a separation $\vr \in P_M$ that distinguishes $P$ and $Q$ well. Moreover, as $s$ is good for $\cP$, there is a profile $Q' \in \cP$ such that $\sv$ is closely related to $Q'$; by symmetry we may assume that~$\rv \in Q'$.
    Then \cref{lem:GreaterGoodSeparations} applied to $P, Q, Q', \vr$ and $\vs$ yields a good separation $\vu \in P_M$ with $\vs \vee \vr \leq \vu$.
    Since~$\vs \leq \vu$, every profile which contains $\vu$ also contains $\vs$, but $\vs \in Q$ and $\vu \notin Q$ as $\uv \leq \rv \in Q$. So $\vu$ is contained in at least on profile less than $\vs$, which contradicts the choice of $\vs$.
\end{proof}

Next we show that every non-empty $E_P$ contains a unique maximal element. This will later allow us to prove that the nested set which we construct here is canonical in a certain sense.

\begin{LEM}\label{lem:UniqueMAxElementInEP}
    Every non-empty set $E_P$ contains a unique maximal element.
\end{LEM}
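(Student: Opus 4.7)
The plan is to prove the lemma by showing that $E_P$ is up-directed: for any $\vr,\vs\in E_P$, there is some $\vu\in E_P$ with $\vu\geq\vr$ and $\vu\geq\vs$. This suffices, since an up-directed set has at most one maximal element. If $\vr\leq\vs$ then $\vs$ itself works, so I may assume $\vr$ and $\vs$ are incomparable. By definition of $E_P$, both $\vr$ and $\vs$ are closely related to $P$, and each well-distinguishes $P$ from some other profile; pick profiles $Q_r,Q_s\neq P$ such that $\rv$ is closely related to $Q_r$ and $\sv$ to $Q_s$.

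First I would handle the case $Q_r\neq Q_s$. Since $\vr$ is $P$-exclusive, $\rv\in Q_s$, so the hypotheses of \cref{lem:GreaterGoodSeparations} hold with the three distinct profiles $P$, $Q_r$, $Q_s$ and the separations $\vr$ and $\vs$. This provides $\vu\in P_M$ with $\vr\vee\vs\leq\vu$ which distinguishes $P$ and $Q_r$ well. Then $\vu$ is $P$-exclusive: any $T\neq P$ containing $\vu$ would also contain $\rv$, and together with $\vr\leq\vu$ this violates consistency of $T$. Hence $\vu\in E_P$, and it dominates both $\vr$ and $\vs$.

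In the remaining case $Q_r=Q_s=:Q$, both $\rv$ and $\sv$ are closely related to $Q$, so $\rv\wedge\sv\in\vS$ (applying closeness of $\sv$ to $\rv\in Q$), and consequently $\vr\vee\vs=(\rv\wedge\sv)^*\in\vS$ as well. One verifies $\vr\vee\vs\in P$ from the profile property, that $\rv\wedge\sv\in Q$ is closely related to $Q$ via \cref{prop:ShowingThatASepIsCloselyRelated}, and that $\vr\vee\vs\in P_M$ by \cref{lem:Fischlemma} (with an analogous direct argument in the nested-but-incomparable case for $r,s$). Next I choose a maximal $\vu\in P_M$ with $\vu\geq\vr\vee\vs$; by \cref{lem:NestedMaxSepsAreClRel} this $\vu$ is closely related to $P$. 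Then \cref{prop:GoodSepsBetweenCloselyRelatedSeps} applied to $\vr\vee\vs\leq\vu$ (with $\rv\wedge\sv$ closely related to $Q$, $\vu$ closely related to $P$, and both separations distinguishing $P$ and $Q$) yields $\vu'\in\vS_M$ with $\vr\vee\vs\leq\vu'\leq\vu$ that distinguishes $P$ and $Q$ well. As in the first case, $P$-exclusivity of $\vu'$ follows from $\vu'\geq\vr$, so $\vu'\in E_P$ dominates both $\vr$ and $\vs$.

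The main obstacle is the subcase $Q_r=Q_s$: there \cref{lem:GreaterGoodSeparations} cannot be invoked directly (it needs three distinct profiles), and the detour through the maximal extension $\vu$ together with \cref{prop:GoodSepsBetweenCloselyRelatedSeps} is what upgrades the mere join $\vr\vee\vs$ into a genuinely good separation in $E_P$. A secondary technicality is verifying that $\vr\vee\vs$ remains nested with $M$, which is immediate from \cref{lem:Fischlemma} when $r,s$ cross but needs a short case analysis when $r,s$ are nested with incomparable orientations.
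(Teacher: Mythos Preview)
Your proof is correct and follows essentially the same route as the paper's. The paper argues by contradiction: given two distinct maximal elements $\vr,\vs\in E_P$, it invokes \cref{lem:GreaterGoodSeparations} with the profiles $P,Q,Q'$ (where $r$ and $s$ well-distinguish $P$ from $Q$ and $Q'$ respectively) to obtain $\vu\in E_P$ with $\vu\geq\vr\vee\vs>\vr,\vs$. The paper does \emph{not} split into cases according to whether $Q=Q'$; it simply applies \cref{lem:GreaterGoodSeparations} regardless, which is slightly loose since that lemma is stated for three \emph{distinct} profiles, but its proof never uses the distinctness of $Q$ and $Q'$. Your Case~2 is thus a careful unpacking of what the paper leaves implicit: you reprove the content of \cref{lem:GreaterGoodSeparations} in the degenerate situation $Q_r=Q_s$ via \cref{prop:ShowingThatASepIsCloselyRelated}, \cref{lem:NestedMaxSepsAreClRel} and \cref{prop:GoodSepsBetweenCloselyRelatedSeps}.

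One simplification: the ``nested but incomparable'' subcase you flag cannot occur. If $\vr\leq\sv$ with $\vr,\vs\in E_P$ distinct, then $\sv\in Q_s$ and consistency of $Q_s$ force $\vr\in Q_s$, contradicting $P$-exclusivity of $\vr$. So $r$ and $s$ always cross when $\vr,\vs$ are incomparable, and \cref{lem:Fischlemma} suffices for nestedness with $M$.
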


\begin{proof}
Suppose for a contradiction that there are at least two distinct maximal elements $\vr, \vs \in E_P$. Let~$Q$ and $Q'$ be the profiles that are well distinguished from~$P$ by $r$ and $s$, respectively. Since $\vr, \vs$ are exclusive for $P$, we have $\rv, \sv \in Q \cap Q'$. 
Applying \cref{lem:GreaterGoodSeparations} to $P, Q, Q', \vr, \vs$ yields a separation $\vu \in P_M$ with $\vr \vee \vs \leq \vu$ that distinguishes $P$ and $Q$ well. It follows that $\vu \in E_P$. 
Since both $\vr, \vs$ are maximal in $P_M$, they cross, and thus $\vu \geq \vr \vee \vs > \vr, \vs$, which contradicts the assumption that $\vr, \vs$ are maximal elements in~$E_P$.
\end{proof}

If $E_P$ is non-empty, we denote the unique maximal element of $E_P$ with $\vr_P$. The next step will be to show that picking the maximal separation of every non-empty $E_P$ actually yields a nested set of separations:

\begin{LEM}\label{lem:MaxGoodExclusiveSepsAreNested}
    Let $P$ and $P'$ be two distinct profiles with $E_P \neq \emptyset \neq E_{P'}$. Then $r_P$ and $r_{P'}$ are nested. 
\end{LEM}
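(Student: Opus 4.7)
The plan is to argue by contradiction: suppose $r_P$ and $r_{P'}$ cross, and derive a contradiction with the uniqueness of the maximal element of $E_P$ or $E_{P'}$ from \cref{lem:UniqueMAxElementInEP}. Since $\vr_P \in E_P$ and $\vr_{P'} \in E_{P'}$, I fix profiles $Q, Q' \in \cP$ so that $\vr_P$ well-distinguishes $P$ from $Q$ (so $\rv_P$ is closely related to $Q$) and $\vr_{P'}$ well-distinguishes $P'$ from $Q'$. Observe that $\rv_{P'} \in P$ by the $P'$-exclusivity of $\vr_{P'}$, that $\rv_P \in P'$ by the $P$-exclusivity of $\vr_P$, and that both $\rv_{P'}$ and $\rv_P$ are nested with $M$ by \cref{lem:Fischlemma}. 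Moreover, since $r_P$ and $r_{P'}$ are assumed to cross, $\rv_{P'}$ is incomparable with $\vr_P$, so $\vr_P \vee \rv_{P'}$ is strictly greater than $\vr_P$ whenever it lies in $\vS$.

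If $Q \neq P'$, I would apply \cref{lem:GreaterGoodSeparations} with $\vr := \vr_P$ and $\vs := \rv_{P'}$ in $P_M$, taking the three distinct profiles to be $P$, $Q$, and $P'$: the conditions $\rv_P, \vr_{P'} \in P'$, $\rv_P$ closely related to $Q$, and $\vr_{P'}$ closely related to $P'$ are all in place. This yields $\vu \in P_M$ with $\vr_P \vee \rv_{P'} \leq \vu$ that well-distinguishes $P$ from $Q$. Consistency forces any profile containing $\vu$ to contain $\vr_P$, so $\vu$ is $P$-exclusive and hence lies in $E_P$; but $\vu > \vr_P$, contradicting \cref{lem:UniqueMAxElementInEP}. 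The case $Q' \neq P$ is handled symmetrically, contradicting the maximality of $\vr_{P'}$ in $E_{P'}$.

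The remaining (and main) case is $Q = P'$ and $Q' = P$, so that $\rv_P$ is closely related to $P'$ and $\rv_{P'}$ is closely related to $P$. Then both $\vr_P$ and $\rv_{P'}$ are closely related to $P$, and dually both $\vr_{P'}$ and $\rv_P$ are closely related to $P'$. Applying \cref{prop:InfimaOfClRelSetsAreClRel} to each of these pairs, I obtain that $\vr_P \wedge \rv_{P'} \in \vS$ is closely related to $P$, and that $\rv_P \wedge \vr_{P'} \in \vS$ is closely related to $P'$; in particular $\vu := \vr_P \vee \rv_{P'} = (\rv_P \wedge \vr_{P'})^* \in \vS$, $\vu$ is $P$-exclusive and nested with $M$, $\vu > \vr_P$ strictly, and $\uv = \rv_P \wedge \vr_{P'}$ is closely related to $P'$.

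The main obstacle is then to verify that $\vu$ itself is closely related to $P$, at which point $\vu$ would belong to $E_P$ and contradict the maximality of $\vr_P$. This closeness does not follow directly from \cref{prop:InfimaOfClRelSetsAreClRel}, which handles infima rather than suprema. My plan to overcome this is to use the emulation characterisation noted after the definition of close relatedness -- namely, that $\rv_P$ emulates every $\xv$ with $\vr_P \leq \vx \in P$ and, by symmetry in this sub-case, that $\vr_{P'}$ emulates every $\xv$ with $\rv_{P'} \leq \vx \in P$ -- and to combine these two emulation properties with submodularity of $\vS$ applied to the corner decompositions of $\vu$ against an arbitrary $\vx \in P$. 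This should allow me to conclude $(\vr_P \vee \rv_{P'}) \wedge \vx \in \vS$ for every $\vx \in P$, completing the verification that $\vu$ is closely related to $P$ and finishing the contradiction.
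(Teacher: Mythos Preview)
Your first two cases are fine: in each you invoke \cref{lem:GreaterGoodSeparations} with three genuinely distinct profiles and obtain an element of $E_P$ (resp.\ $E_{P'}$) strictly above $\vr_P$ (resp.\ $\vr_{P'}$). The gap is in your third case. There you want to show that $\vu=\vr_P\vee\rv_{P'}$ is itself closely related to~$P$, and you propose to do so by ``combining the two emulation properties with submodularity applied to the corner decompositions of $\vu$ against an arbitrary $\vx\in P$.'' This is not a proof: emulation only repackages the facts $\vr_P\wedge\vx\in\vS$ and $\rv_{P'}\wedge\vx\in\vS$ for $\vx\in P$, and in a not necessarily distributive universe there is no way to pass from these to $(\vr_P\vee\rv_{P'})\wedge\vx\in\vS$. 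Submodularity only tells you that \emph{one} of the two opposite corners of $u$ and $x$ lies in $\vS$, which does not help when it is the wrong one.

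The fix is easier than what you are attempting and is already implicit in what you wrote: you have shown via \cref{prop:InfimaOfClRelSetsAreClRel} that $\uv=\rv_P\wedge\vr_{P'}$ is closely related to~$P'$. Now pick $\vw\in P_M$ maximal with $\vu\le\vw$; by \cref{lem:NestedMaxSepsAreClRel} this $\vw$ is closely related to~$P$, and both $u$ and $w$ distinguish $P$ from~$P'$ since $\vr_P\le\vu\le\vw$. Applying \cref{prop:GoodSepsBetweenCloselyRelatedSeps} (with its `moreover' part) to $\wv\le\uv$ gives some $\vu'\in\vS_M$ with $\vu\le\vu'\le\vw$ that distinguishes $P$ and $P'$ well. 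Then $\vu'\in E_P$ and $\vu'>\vr_P$, the desired contradiction. Notice this is precisely the argument inside the proof of \cref{lem:GreaterGoodSeparations}; in effect your third case is that lemma with $Q=Q'=P'$, and its proof never uses $Q\neq Q'$. The paper avoids the whole case split by a single application of \cref{lem:GreaterGoodSeparations} on the $P'$ side, producing an element of $E_{P'}$ above $\vr_{P'}$ directly.
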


\begin{proof}
Suppose for a contradiction that $r_P$ and $r_{P'}$ cross.
Since $\vr_{P'}$ is $P'$-exclusive and $\vr_P$ is $P$-exclusive, we have that $\rv_{P'} \in P$ and $\rv_P \in P'$. Let $P''$ be a profile from which $r_{P'}$ distinguishes $P'$ well. Then~$\rv_{P'}$ is closely related to $P''$. Applying \cref{lem:GreaterGoodSeparations} to $P', P, P'', \rv_{P}$ and $\vr_{P'}$ yields a separation $\vu \in P'_M$ with~$\rv_{P} \vee \vr_{P'} \leq \vu$ that distinguishes $P'$ and $P''$ well. But since $r_P$ and $r_{P'}$ cross, $\vr_{P'} < \vr_{P'} \vee \rv_{P} \leq \vu$, a contradiction to the maximality of $\vr_{P'}$ in $E_{P'}$.
\end{proof}

We are now ready to construct a nested set that distinguishes $\cP$ and uses only separations which are good for $\cP$, by applying the above lemmas. 

\begin{THM}\label{thm:ACanonicalGoodToT}
    Let $\vS$ be a submodular separation system, and let $\cP$ be a set of profiles of $S$. Then there exists a nested set $N := N(\vS, \cP)$ that distinguishes all profiles in $\cP$ and contains only separations which are good for $\cP$.
    
    Moreover, this set $N$ can be chosen so that if $\vS'$ is a submodular separation system and $\phi: \vS \rightarrow \vS'$ is an isomorphism of separation systems such that for all $\vr, \vs \in \vS$ we have that $\vr \vee \vs \in \vS$ if and only if $\phi(\vr) \vee \phi(\vs) \in \vS'$, then $\phi(N(\vS, \cP)) = N(\vS', \phi(\cP))$.
\end{THM}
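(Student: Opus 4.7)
The plan is to imitate \cref{constr:Tc}, substituting at each stage the unique maximal element of $E_P$ (when non-empty) for the infimum of $M_P$. Starting from $M_0 := \emptyset$ and $\cP_{\geq 1} := \cP$, I will inductively maintain a nested good set $M_{i-1} \subseteq S$ together with a family $\cP_{\geq i} \subseteq \cP$ satisfying $P \cap \vM_{i-1} = P' \cap \vM_{i-1}$ for all $P, P' \in \cP_{\geq i}$, so that the setup hypothesis of this section applies to $M := M_{i-1}$ and profile set $\cP_{\geq i}$. At every step where $|\cP_{\geq i}| \geq 2$, \cref{lem:SomeEPIsNonEmpty} guarantees that some $E_P$ (relative to this $M$ and $\cP_{\geq i}$) is non-empty, \cref{lem:UniqueMAxElementInEP} furnishes a unique maximal element $r_P \in E_P$ for each such $P$, and \cref{lem:MaxGoodExclusiveSepsAreNested} ensures these $r_P$ are pairwise nested (they are automatically nested with $M_{i-1}$ as $r_P \in \vS_{M_{i-1}}$ by definition of $E_P$). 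Setting $\cP_i := \{P \in \cP_{\geq i} : E_P \neq \emptyset\}$, $\cP_{\geq i+1} := \cP_{\geq i} \setminus \cP_i$, and $M_i := M_{i-1} \cup \{r_P : P \in \cP_i\}$, the $P$-exclusivity of each $r_P$ for $\cP_{\geq i}$ preserves the agreement invariant on $\cP_{\geq i+1}$. Since $\cP_i \neq \emptyset$ strictly shrinks $\cP_{\geq i}$ at every relevant stage, the iteration (handling limit ordinals by unions, if $\cP$ is infinite) eventually reaches some $K$ with $|\cP_{\geq K+1}| \leq 1$; I then set $N := N(\vS, \cP) := M_K$.

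Correctness will then be immediate: every separation in $N$ is good for $\cP$ by definition of $E_P$, the set $N$ is nested by construction, and any two distinct profiles $P \neq P'$ in $\cP$ are separated by whichever of $r_P, r_{P'}$ was chosen earlier --- at least one of which must belong to $N$, since $\cP_{\geq K+1}$ contains at most one profile.

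For the canonicity claim, I will observe that the hypothesis on $\phi$ --- preservation of the relation ``$\vr \vee \vs \in \vS$'' --- passes via the involution to preservation of ``$\vr \wedge \vs \in \vS$'', and hence transports along $\phi$ every notion used in the construction: nestedness, close relatedness, goodness, well-distinguishing, $P$-exclusivity, $\vS_M$, and maximality in $E_P$. An induction on the stage $i$ then yields $\phi(M_{i-1}) = M'_{i-1}$ (the analogous set built over $(\vS', \phi(\cP))$), hence $\phi(E_P) = E_{\phi(P)}$, and via the uniqueness clause of \cref{lem:UniqueMAxElementInEP}, $\phi(r_P) = r_{\phi(P)}$; consequently $\phi(M_i) = M'_i$, and taking $i = K$ gives $\phi(N(\vS, \cP)) = N(\vS', \phi(\cP))$.

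The hard part has already been packaged into \cref{lem:SomeEPIsNonEmpty,lem:UniqueMAxElementInEP,lem:MaxGoodExclusiveSepsAreNested}, which together drive the existence, uniqueness and nestedness of the chosen $r_P$ at every stage. The one point requiring discipline is the canonicity argument: throughout the construction one must never appeal to the specific ambient \emph{value} of a supremum or infimum (which $\phi$ need not preserve), only to the memberships $\vee \in \vS$ and $\wedge \in \vS$ --- precisely the information that $\phi$ preserves by hypothesis.
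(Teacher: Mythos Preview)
Your proposal is correct and follows essentially the same approach as the paper's proof: both pick, at each stage, the unique maximal element $r_P$ of each non-empty $E_P$ (using \cref{lem:SomeEPIsNonEmpty,lem:UniqueMAxElementInEP,lem:MaxGoodExclusiveSepsAreNested}), add these to the growing nested set, discard the corresponding profiles, and recurse on the remainder. The only cosmetic difference is that you phrase this as an explicit iteration (in the style of \cref{constr:Tc}) while the paper phrases it as an induction on $|\cP|$; the canonicity argument via preservation of the relation ``$\vr \vee \vs \in \vS$'' (and hence of close relatedness, goodness, and $E_P$) is identical in spirit.
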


\noindent We remark that this nested set will also be needed in a subsequent paper \cite{SARefiningEssParts}.
\medskip

Before we prove \cref{thm:ACanonicalGoodToT}, let us make a few more remarks about its canonicity statement.
The definition of canonicity in \cref{thm:ACanonicalGoodToT} is weaker than the usual one, which requires $\phi(N(\vS, \cP)) = N(\vS', \phi(\cP))$ for all isomorphisms of separation systems $\phi: \vS \rightarrow \vS'$ and not just for those which preserve infima and suprema, as we do here.
We need this stronger assumption on the isomorphisms to make sure that the set of all good separations is invariant under every such $\phi$, i.e.\ if $s \in S$ is a good separation for $\cP$ and $\phi: \vS \rightarrow \vS'$ is as above, then $\phi(s)$ is a good separation for $\phi(\cP)$ and vice versa. Indeed, as the existence of infima and suprema is preserved under every $\phi$ which satisfies the stronger assumptions, this in particular implies that a separation $\vs \in P$ is closely related to $P$ if and only if $\phi(\vs)$ is closely related to $\phi(P)$.

\begin{proof}[Proof of \cref{thm:ACanonicalGoodToT}]
    We show by induction on $\abs{\cP}$ that for every nested set $M$ of good separations which satisfies that $P \cap \vM = P' \cap \vM$ for all $P, P' \in \cP$ there exists a nested set $N(\vS, \cP) \subseteq S_M$ which distinguishes~$\cP$ and is good for $\cP$. Moreover, this set will be canonical in the above sense if $M$ is canonical in that sense. With $M := \emptyset$ this clearly implies the assertion.

    So let some such set $M$ be given.
    If $\abs{\cP} \leq 1$, then $N := \emptyset$ is a valid choice. So we may assume that $\abs{\cP} > 1$ and that the assertion holds for all sets $\cP'$ with $\abs{\cP'} < \abs{\cP}$. 
    
    Set $N' := \{r_P : P \in \cP \text{ and } E_P \neq \emptyset\}$. By \cref{lem:MaxGoodExclusiveSepsAreNested} the set $N'$ is nested. Moreover, $N'$ consists only of separations that are good for $\cP$ and is clearly an invariant of $\vS$, $\cP$ and $M$ under all isomorphisms that preserve infima and suprema.
    Let $\cP' \subseteq \cP$ be the set of all profiles $P \in \cP$ with empty $E_P$, and let~$M' := M \cup N'$. Note that, by construction, $N'$ distinguishes all profiles in $\cP \setminus \cP'$ from all other profiles in~$\cP$. Moreover, $\rv_P \in P'$ for all $P \in \cP \setminus \cP'$ and $P' \in \cP'$, which implies that $P \cap \vM' = P' \cap \vM'$ for all~$P' \in \cP'$.
    
    Since $\abs{\cP'} < \abs{\cP}$ by \cref{lem:SomeEPIsNonEmpty}, we can apply the induction hypothesis to $\cP'$ and $M'$ to obtain a nested set $N''$ that distinguishes $\cP'$ well and is an invariant of $\vS$, $\cP'$ and $M'$ in the above sense. Since $M'$ and~$\cP'$ themselves are invariants of $M$ and $\cP$ in the above sense, we have that $N' \cup N''$ is the desired set.
\end{proof}

We conclude this section with showing that one cannot strengthen \cref{thm:ACanonicalGoodToT} with regards to the canonicity statement. More precisely, the following example describes a submodular separation system and a set of profiles such that there is no nested set that distinguishes $\cP$, contains only good separations and is canonical in the usual (stronger) sense.

\begin{EX}\label{ex:Counterexample_CanonicalGoodToT}
    Let $V$ consists of the 20 grey points in \cref{fig:CounterexampleCanonicalGoodToT}, and $\vU$ be the universe of all bipartitions of $V$, i.e.\ $\vU$ contains $(A,B)$ and $(B,A)$ for each bipartition of $V$ (compare \cite{TangleTreeGraphsMatroids}). Further, let $S$ be the separation system given by the 20 bipartitions of $V$ outlined in \cref{fig:CounterexampleCanonicalGoodToT} together with $\{V, \emptyset\}$ and all their corner separations which do not distinguish $P_1$ and $P_2$. Here, $P_1$ and $P_2$ are the profiles one obtains by orienting every separation in $S$ to the side that contains $v_1$ or $v_2$, respectively.
    It is straight forward to check that $\vS$ is submodular and that $\vs_1 \vee \vr_1$ and $\vs_2 \wedge \vr_2$ are not in $\vS$. 
    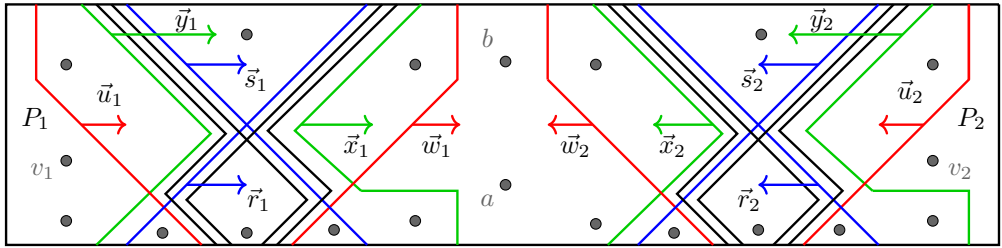
\begin{figure}[h!]
		\centering
        \definecolor{grey}{rgb}{0.4,0.4,0.4}
\definecolor{lgreen}{rgb}{0,0.8,0}
\scalebox{1}{%
	\begin{tikzpicture}[scale=4]
		\draw [line width=0.9pt] (4.1,1.6)-- (4.1,0.8)-- (0.8,0.8)-- (0.8,1.6)-- (4.1,1.6);
		
		\draw [line width=0.9pt,color=lgreen] (2.18,1.6)-- (1.76,1.18)-- (1.98,0.98);
		\draw [line width=0.9pt,color=lgreen] (1.98,0.98)-- (2.3,0.98)-- (2.3,0.8);
		\draw [line width=0.9pt,color=lgreen] (1.05,1.6)-- (1.48,1.17)-- (1.1,0.8);
		\draw [line width=0.9pt,color=lgreen] (3.88,1.6)-- (3.46,1.18)-- (3.68,0.98);
		\draw [line width=0.9pt,color=lgreen] (3.68,0.98)-- (4,0.98)-- (4,0.8);
		\draw [line width=0.9pt,color=lgreen] (2.75,1.6)-- (3.18,1.17)-- (2.8,0.8);
		
		\draw [line width=0.9pt,color=red] (1.75,0.8)--(2.3,1.35)-- (2.3,1.6);
		\draw [line width=0.9pt,color=red] (1.45,0.8)-- (0.9,1.35)-- (0.9,1.6);
		\draw [line width=0.9pt,color=red] (3.45,0.8)--(4,1.35)-- (4,1.6);
		\draw [line width=0.9pt,color=red] (3.15,0.8)-- (2.6,1.35);
		\draw [line width=0.9pt,color=red] (4,1.35)-- (4,1.6);
		\draw [line width=0.9pt,color=red] (2.6,1.35)-- (2.6,1.6);
		
		\draw [line width=0.9pt,color=blue] (1.2,1.6)-- (2,0.8);
		\draw [line width=0.9pt,color=blue] (1.2,0.8)-- (2,1.6);
		\draw [line width=0.9pt,color=blue] (2.9,1.6)-- (3.7,0.8);
		\draw [line width=0.9pt,color=blue] (2.9,0.8)-- (3.7,1.6);
		
		\draw [line width=0.9pt] (1.55,0.8)-- (1.4,0.95)-- (2.05,1.6);
		\draw [line width=0.9pt] (1.65,0.8)-- (1.8,0.95)-- (1.15,1.6);
		\draw [line width=0.9pt] (1.7,0.8)-- (1.88,0.98)-- (1.67,1.18)-- (2.1,1.6);
		\draw [line width=0.9pt] (1.5,0.8)-- (1.33,0.97)-- (1.53,1.17)-- (1.1,1.6);
		\draw [line width=0.9pt] (3.25,0.8)-- (3.1,0.95)-- (3.75,1.6);
		\draw [line width=0.9pt] (3.35,0.8)-- (3.5,0.95)-- (2.85,1.6);
		\draw [line width=0.9pt] (3.4,0.8)-- (3.58,0.98)-- (3.37,1.18)-- (3.8,1.6);
		\draw [line width=0.9pt] (3.2,0.8)-- (3.03,0.97)-- (3.23,1.17)-- (2.8,1.6);

		\draw [->,line width=0.9pt,color=blue] (1.4,1.4) -- (1.6,1.4);
		\draw [->,line width=0.9pt,color=blue] (1.4,1) -- (1.6,1);
		\draw [->,line width=0.9pt,color=blue] (3.5,1.4) -- (3.3,1.4);
		\draw [->,line width=0.9pt,color=blue] (3.5,1) -- (3.3,1);
		\draw [->,line width=0.9pt,color=red] (2.15,1.2) -- (2.31,1.2);
		\draw [->,line width=0.9pt,color=red] (1.05,1.2) -- (1.2,1.2);
		\draw [->,line width=0.9pt,color=red] (2.75,1.2) -- (2.6,1.2);
		\draw [->,line width=0.9pt,color=red] (3.85,1.2) -- (3.7,1.2);
		\draw [->,line width=0.9pt,color=lgreen] (1.78,1.2) -- (2.02,1.2);
		\draw [->,line width=0.9pt,color=lgreen] (3.15,1.2) -- (2.95,1.2);
		\draw [->,line width=0.9pt,color=lgreen] (1.15,1.5) -- (1.5,1.5);
		\draw [->,line width=0.9pt,color=lgreen] (3.78,1.5) -- (3.4,1.5);

		\draw (3.21,1.42) node[anchor=north west] {$\vec{s}_2$};
		\draw (3.2,1.02) node[anchor=north west] {$\vec{r}_2$};
		\draw (2.61,1.2) node[anchor=north west] {$\vec{w}_2$};
		\draw (3.73,1.38) node[anchor=north west] {$\vec{u}_2$};
		\draw (1.56,1.42) node[anchor=north west] {$\vec{s}_1$};
		\draw (1.57,1.02) node[anchor=north west] {$\vec{r}_1$};
		\draw (1.07,1.37) node[anchor=north west] {$\vec{u}_1$};
		\draw (2.15,1.2) node[anchor=north west] {$\vec{w}_1$};
		\draw (1.33,1.62) node[anchor=north west] {$\vec{y}_1$};
		\draw (1.89,1.2) node[anchor=north west] {$\vec{x}_1$};
		\draw (2.94,1.2) node[anchor=north west] {$\vec{x}_2$};
		\draw (3.44,1.62) node[anchor=north west] {$\vec{y}_2$};

		\draw (0.82,1.288) node[anchor=north west] {$P_1$};
		\draw (3.93,1.288) node[anchor=north west] {$P_2$};
		\draw [color=grey](0.85,1.1) node[anchor=north west] {$v_1$};
		\draw [color=grey](3.9,1.1) node[anchor=north west] {$v_2$};
		\draw [color=grey](2.4,1.0) node[anchor=north] {$a$};
		\draw [color=grey](2.4,1.55) node[anchor=north] {$b$};

		\begin{scriptsize}
			\draw [fill=grey] (1,1.08) circle (0.5pt);
			\draw [fill=grey] (1,1.4) circle (0.5pt);
			\draw [fill=grey] (1,0.88) circle (0.5pt);
			\draw [fill=grey] (1.6,1.5) circle (0.5pt);
			\draw [fill=grey] (1.6,0.84) circle (0.5pt);
			\draw [fill=grey] (1.32,0.84) circle (0.5pt);
			\draw [fill=grey] (1.89,0.85) circle (0.5pt);
			\draw [fill=grey] (2.16,0.88) circle (0.5pt);
			\draw [fill=grey] (2.16,1.4) circle (0.5pt);
			\draw [fill=grey] (2.46,1) circle (0.5pt);
			\draw [fill=grey] (2.76,1.4) circle (0.5pt);
			\draw [fill=grey] (3.02,0.84) circle (0.5pt);
			\draw [fill=grey] (3.3,0.85) circle (0.5pt);
			\draw [fill=grey] (3.31,1.5) circle (0.5pt);
			\draw [fill=grey] (3.88,1.4) circle (0.5pt);
			\draw [fill=grey] (3.88,0.88) circle (0.5pt);
			\draw [fill=grey] (3.88,1.08) circle (0.5pt);
			\draw [fill=grey] (3.57,0.85) circle (0.5pt);
			\draw [fill=grey] (2.46,1.41) circle (0.5pt);
			\draw [fill=grey] (2.76,0.87) circle (0.5pt);
		\end{scriptsize}
	\end{tikzpicture}
}%
		\caption{A separation system and two profiles without a canonical good tree of tangles.}
		\label{fig:CounterexampleCanonicalGoodToT}
	\end{figure}    
	
	Consider the map $\phi: \vS \rightarrow \vS$ that maps $\vx_1$ and $\vy_1$ to $\vx_2$ and $\vy_2$, respectively, and every other separation to its reflection that one obtains by mirroring on the vertical axis through $a$ and $b$.
	Then $\phi$ is an automorphism of separation systems since an automorphism only has to preserve the partial order on $\vS$, and here $\vx_1$ is the unique supremum of $\vs_1$ and $\vr_1$ in $\vS$ even though it is not their supremum $\vs_1 \vee \vr_1$ in $\vU$ (and analogously for $\vy_2$). 
	
	It is straight forward to check that $\vs_1 \wedge \vw_1$, $\vy_1 \vee \vu_1$, $\vr_2 \vee \vu_2$ and $\vx_2 \wedge \vw_2$ are the only separations in $\vS$ that are good for $\cP := \{P_1, P_2\}$. But $\phi$ maps none of these separations to another good separation. Hence, there is no nested set $N$ that distinguishes~$P_1$ and $P_2$, contains only separations that are good for $\cP$, and satisfies $\phi'(N) = N$ for all automorphisms $\phi'$ of $\vS$.
\end{EX}

\section{Outlook}\label{sec:Outlook}

The nested sets from \cref{cor:RefiningACanonicalToT} and \cref{thm:RefiningGoodToTs} have the property that the inessential nodes are too `small' to be home to a tangle in that they are stars in $\cF$. However, we did not require the essential nodes to be `small' in any sense. 
Indeed, if the separation system at hand comes from a graph $G$, then it is not too difficult to see that the nested set $N$ from \cref{cor:RefiningACanonicalToT} and \ref{thm:RefiningGoodToTs} directly translates to a tree-decomposition of $G$, and the essential parts of that decomposition might contain a great portion of $G$ with lots of vertices that do not really `belong' to the tangle living in that part. It would therefore be nice to know if it is possible to further refine the essential nodes of $N$ as well.

One natural approach would be to ask the essential nodes of $N$ to be `maximal' in the tangle they are home to, while preserving the property that every inessential node of $N$ is a star in $\cF$.
For this note that the partial order on $\vS$ induces a partial order on the set of all `proper' stars \cite{TreeSets}. 
A star $\sigma \subseteq \vS$ is \emph{proper} if for every distinct $\vs, \vr \in \sigma$ the relation $\vs \leq \rv$ is the only one, i.e.\ $\vs \not\leq \vr$, $\vs \not\geq \vr$ and~$\vs \not\geq \rv$. For two proper stars~$\sigma, \tau \subseteq \vS$ we have $\sigma \leq \tau$ if and only if for every $\vs \in \sigma$ there exists some~$\vr \in \tau$ such that $\vs \leq \vr$.
A proper star in $\vS$ is \emph{maximal} in a profile $P$ of $S$ if it is a maximal element in the set of all proper stars in~$P$.

While it is not true that the essential nodes of the nested set $N(\vS, \cP)$ from \cref{constr:Tc} can be refined in that way \cite{SARefiningInessPartsExtended}, we will show in \cite{SARefiningEssParts} that good tree sets indeed always admit such refinements. 
More precisely, if $S$ is a submodular separation system inside some distributive universe, then it is possible to refine every tangle-distinguishing tree set which is good for the set of all $\cF$-tangles so that all its inessential nodes are stars in $\cF$, and all its essential nodes are maximal in the tangle living at them \cite{SARefiningEssParts}.

Moreover, we will show a stronger statement for separations systems coming from graphs which will circumvent the following problem. 
Let $G$ be some graph and $k \in \N$, and let $\sigma$ be an essential node of a tree set $\tilde{N}$ inside $S_k$.
Then one can always refine $\sigma$ with the rather naive star which contains all small separations of the form $(A,V(G))$, where the small sides $A$ cover $\bigcap_{(C,D) \in \sigma} D$. The arising star $\sigma'$ will then be maximal in the tangle living at it, and all newly arising inessential nodes will be of the form $\{(V(G),A)\}$ and hence be co-small. 
However, in doing so, we have not gained any information about the graph $G$. Indeed, if we compare the two tree-decompositions corresponding to $\tilde{N}$ and its refinement described above, then the essential parts corresponding to $\sigma$ and $\sigma'$ will be the same.
Ideally, though, we would like to make the essential parts as small as possible, so we are more interested in essential stars whose \emph{interior} $\bigcap_{(A,B) \in \sigma'} B$ is as small as possible.
In \cite{SARefiningEssParts} we show that such refinements always exist\footnote{The problem of refining essential stars in graphs was already addressed by Erde \cite{JoshRefining}. \cref{thm:GraphTanglesToTNestedSetVersion} strengthens his result.}:

\begin{THM}\label{thm:GraphTanglesToTNestedSetVersion}
    Let $G$ be a graph, $k \in \N$, and let $\cF$ be a friendly set of stars in $\vS_k(G)$. 
    Further, let $\tilde{N} \subseteq S_k(G)$ be a nested set of separations that distinguishes the all the $\cF$-tangles of~$S_k(G)$ so that every separation in~$\tilde{N}$ efficiently distinguishes a pair of $\cF$-tangles of $S_k(G)$. Then there exists a nested set $N \subseteq S_k(G)$ with $\tilde{N} \subseteq N$ such that
    \begin{enumerate}[label=\rm{(\roman*)}]
        \item\label{itm:GraphTanglesToTNSiness} every inessential node of $N$ is a star in $\cF$;
        \item\label{itm:GraphTanglesToTNSess} the interior of every essential node of $N$ is of smallest size among all the exclusive stars contained in the $\cF$-tangle living at that node.
    \end{enumerate}
\end{THM}

\noindent Note that applying \cref{thm:GraphTanglesToTNestedSetVersion} to the tree set from \cite{CDHH13CanonicalAlg} yields that there exists a \emph{canonical} tree set which can be refined so that both its essential and inessential nodes are small.

\bibliographystyle{amsplain}
\bibliography{collective}

\end{document}